\newtheorem{theorem}{Theorem}[section]
\newtheorem{lemma}[theorem]{Lemma}
\newtheorem{proposition}[theorem]{Proposition}
\newtheorem{corollary}[theorem]{Corollary}
\theoremstyle{definition}
\newtheorem{definition}[theorem]{Definition}
\newtheorem{claim}[theorem]{Claim}
\newtheorem{conjecture}[theorem]{Conjecture}
\newcommand{\mA}{\mathbb A}
\newcommand{\mC}{{\mathbb C}}
\newcommand{\mE}{{\mathbb E}}
\newcommand{\mF}{\mathbb F}
\newcommand{\mN}{\mathbb N}
\newcommand{\mP}{\mathbb P}
\newcommand{\mV}{\mathbb V}
\newcommand{\mX}{\mathbb X}
\newcommand{\mZ}{{\mathbb Z}}
\newcommand{\bo}{\omega}
\newcommand{\ep}{\varepsilon}
\newcommand{\kk}{\kappa}
\newcommand{\mcF}{\mathcal F}
\newcommand{\mcG}{\mathcal G}
\newcommand{\mcI}{\mathcal I}
\newcommand{\mcP}{\mathcal P}
\newcommand{\mcQ}{\mathcal Q}
\newcommand{\mcR}{\mathcal R}
\newcommand{\mcS}{\mathcal S}
\newcommand{\del}{\triangle}
\newcommand{\ti}{\tilde}
\author{Amichai Lampert and Tamar Ziegler}
\title{Relative rank and regularization}
\thanks{The authors are supported by ERC grant ErgComNum 682150, and ISF grant 2112/20.}
\begin{document}
	\maketitle

\begin{abstract} 
	We introduce a new concept of rank - {\em relative rank} associated to a filtered collection of polynomials. When the filtration is trivial our relative rank coincides with {\em Schmidt rank} (also called {\em strength}). We also introduce the notion of {\em relative bias}. The main result of the paper is  a relation between these two quantities over finite fields (as a special case we obtain a new proof of the results in \cite{Mi}). This relation allows us to get an accurate estimate for
 the number of points on an affine variety given by a collection of polynomials
which is high relative rank (Lemma \ref{Atom-size}). The key advantage of relative rank is that it allows one to perform an efficient regularization procedure which is {\em polynomial} in the initial number of polynomials (the regularization process with Schmidt rank is far worse than tower exponential). The main result allows us to replace Schmidt rank with relative rank in many key applications in combinatorics, algebraic geometry and algebra.  For example,  we prove that any collection of  polynomials $\mcP=(P_i)_{i=1}^c$  of degrees $\le d$ in a polynomial ring over an algebraically closed field of characteristic $>d$ is contained in an ideal $\mcI(\mcQ)$, generated by  a collection $\mcQ$ of  polynomials of degrees $\le d$ which form a regular sequence, and $\mcQ$ is of size $\le A c^{A}$, where $A=A(d)$ is independent of the number of variables.  
 \end{abstract}

\section{introduction}
  In \cite{S} Schmidt introduced a notion of complexity, the  {\em Schmidt rank} (also called the $h$-invariant)\footnote{This notion of complexity was reintroduced later in the work of Ananyan and Hochster \cite{ah} in their proof of Stillman's conjecture where it is called {\em strength}.} for a collection of polynomials $\mathcal P=(P_i)_{i=1}^c$, and used it to obtain conditions on the existence of integer solutions for a system of equations $\{P_i(x)=0\}_{i=1}^c$ defined over the rational numbers. The most natural polynomials on which to define this quantity are homogeneous ones. If $ P $ is a polynomial of degree $ d $ we denote by $ \tilde P $ the degree $ d $ homogeneous component of $ P. $ 		
	
\begin{definition}[Schmidt Rank]
	The {\em Schmidt rank} of a homogeneous polynomial  $P \in \mF[x_1, \ldots, x_n]$ is the minimal length of a presentation $P=\sum_{i=1}^r Q_iR_i,$ where for all $ i\in[r],\ Q_i, R_i $ are homogeneous polynomials of degree $ <\deg(P), $ and is denoted by $rk(P).$ For a general polynomial $ P $ we set $ rk(P) = rk(\tilde P). $ The rank of a collection of homogeneous polynomials $\mcP=(P_i)_{i=1}^c$ is 
	\[
	rk (\mcP) = \min \{ rk(a_1 P_1+\ldots+a_c P_c) :\ 0\neq a\in\mF^c \}.
	 \]
	If $ \mcP $ is a general collection of polynomials we set $ rk(\mcP) = rk(\tilde\mcP), $ where \\
	$ \tilde\mcP = (\tilde P_i)_{i=1}^c. $
\end{definition}	
	
	Now assume $\mF = \mF_q$ is a finite field and fix a non-trivial character $\chi:\mF\to\mC.$ We define the {\em bias} of a polynomial as follows.
	
	\begin{definition}
		For a function $P:\mF^n\to \mF$ we define 
		\[ 
		bias(P) = \left| \mE_{x\in \mF^n} \chi(P(x)) \right|,
		\]
		where $ \mE_{x\in E} := \frac{1}{|E|}\sum_{x\in E} $ for a finite set $ E, $ with the convention $ \mE_{x\in\emptyset} = 0. $ 
		This quantity may of course depend on $\chi,$ but we omit it from the notation.\footnote{For multilinear polynomials $bias(P)$ does not depend on the character $\chi$; in this case $-\log_q(bias(P))$ was introduced in \cite{gw} as the {\em analytic rank of $P$}.}
	\end{definition} 
	
	The relation between bias and rank can already be traced to the work of Schmidt; indeed, in his paper \cite{S} Schmidt showed that over the complex field the Schmidt rank of a polynomial is proportional to the codimension of the singular locus of the associated variety and used this for estimating related exponential sums. 
	
	In \cite{gt} Green and Tao observed that one can make the relation independent of the number of variables: \footnote{They use a slightly weaker definition of rank, but the results in their paper imply the theorem as stated.}
	
		\begin{theorem}[Bias implies low rank]\label{gt} Let $ \mF=\mF_q $ be a finite field, let  $0\le d < char(\mF)$, and let $s>0$.  There exists a constant $C=C(\mF,s,d)$ such that the following holds: For any $n$,  $ P\in\mF[x_1,\ldots,x_n]$ of degree $ \le d $ with $ bias(P) \ge q^{-s} $,  we have $rk(P) \le C$.
	\end{theorem}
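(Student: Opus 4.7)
The plan is to reduce to the case of a symmetric multilinear form via iterated Cauchy--Schwarz, extract an algebraic decomposition from the biased multilinear form, and then go back to $\tilde P$ using polarization. Since $rk(P) = rk(\tilde P)$ by definition, we may focus on the degree-$d$ homogeneous part. Concretely, applying Cauchy--Schwarz $d$ times with the discrete derivatives $\Delta_h P(x) := P(x+h)-P(x)$ yields
\[
bias(P)^{2^d} \le \bigl| \mE_{h_1,\ldots,h_d,\, x}\, \chi\bigl(\Delta_{h_1}\cdots\Delta_{h_d} P(x)\bigr)\bigr|.
\]
Because $\deg P \le d$, the $d$-fold iterated derivative $M(h_1,\ldots,h_d) := \Delta_{h_1}\cdots\Delta_{h_d} P(x)$ is independent of $x$ and is a symmetric $d$-linear form depending only on $\tilde P$. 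Hence $bias(M)\ge q^{-2^d s}$.

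The next step is to establish the analogous structural statement for multilinear forms: if a $d$-linear form $M$ on $\mF^n$ satisfies $bias(M) \ge q^{-t}$, then there exist an integer $r \le C'(\mF,t,d)$, proper nonempty subsets $S_i \subsetneq \{1,\ldots,d\}$, and multilinear forms $A_i, B_i$ with
\[
M(h_1,\ldots,h_d) = \sum_{i=1}^{r} A_i(h_{S_i})\, B_i(h_{[d]\setminus S_i}).
\]
I would proceed by induction on $d$. For the inductive step, single out the last slot and write $M(h_1,\ldots,h_d) = \langle L(h_1,\ldots,h_{d-1}),\, h_d\rangle$ where $L$ is an $\mF^n$-valued $(d-1)$-linear form. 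Averaging $\chi(M)$ over $h_d$ shows that $\{L=0\}$ has density at least $bias(M)$; a Fourier/character argument then forces the coordinate $(d-1)$-linear forms defining $L$ to take values in a subspace of dimension $\le t$ on a dense subset of inputs. This lets one peel off a low-rank contribution from $M$ and apply the inductive hypothesis to the residual.

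To finish, we use the hypothesis $\mathrm{char}(\mF) > d$: polarization gives $\tilde P(x) = \tfrac{1}{d!}\, M(x,x,\ldots,x)$. Substituting the decomposition of $M$ from the previous step expresses $\tilde P$ as a sum of at most $r$ products of two homogeneous polynomials of degrees $< d$, so $rk(\tilde P) \le r \le C'(\mF, 2^d s, d) =: C(\mF, s, d)$, as required.

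The technically delicate part is the multilinear structure theorem: converting the \emph{analytic} hypothesis (large bias) into an \emph{algebraic} conclusion (bounded partition rank), uniformly in the number of variables $n$. The induction on $d$ produces constants $C'(\mF,t,d)$ that grow rapidly---essentially a tower in $d$---and some care is needed to keep them independent of $n$. By contrast, the initial Cauchy--Schwarz reduction is routine, and the final polarization step is immediate once the characteristic hypothesis is in force.
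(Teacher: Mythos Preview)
The paper does not prove Theorem~\ref{gt} itself; it is quoted from Green and Tao \cite{gt}. That said, the paper's main multilinear result (Theorem~\ref{Bias-Rank}) specialized to $\mcQ=\emptyset$ yields a quantitative version, so one can compare your outline to that argument.

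Your architecture---repeated Cauchy--Schwarz to pass to the symmetric $d$-linear form $M$, a structural theorem for biased multilinear forms, and polarization back to $\tilde P$---matches the route taken by Mili\'cevi\'c \cite{Mi}, Janzer \cite{janzer}, and this paper. The first and third steps are correct and standard.

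The gap is entirely in your Step 2, which is the whole content of the theorem. Your inductive step asserts that from $\mP(L=0)\ge q^{-t}$ a ``Fourier/character argument forces the coordinate $(d{-}1)$-linear forms defining $L$ to take values in a subspace of dimension $\le t$ on a dense subset,'' after which you ``peel off a low-rank contribution and apply the inductive hypothesis to the residual.'' Neither assertion is substantiated. The first is at best tautological (the zero set of $L$ is itself such a dense subset) and at worst false if read as a statement about the global image of $L$: already for $d=3$ one can have $L(h_1,h_2)=(h_1)_1\cdot h_2$, whose image is all of $\mF^n$ while $\{L=0\}$ has density $\approx q^{-1}$. Either way this does not by itself produce a partition-rank decomposition of $M$. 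More seriously, there is no ``residual'' of lower degree: subtracting anything built from forms $M(\cdot,y_j)$ still leaves a $d$-linear form, so a naive induction on $d$ does not close. What the actual proofs do (Sections~\ref{approximations}--\ref{nullstellensatz-sec} here, or \cite{Mi,janzer}) is: randomly sample $O(t)$ vectors $y_j$ so that $M$ vanishes \emph{almost everywhere} on the variety $\{M(\cdot,y_j)=0\ \forall j\}$; upgrade almost-everywhere vanishing to identical vanishing via a parallelepiped count (Proposition~\ref{99-to-100}); and then prove a Nullstellensatz (Theorem~\ref{Nullstellensatz}) that converts identical vanishing on that variety into an explicit expression placing $M$ in the ideal generated by the $M(\cdot,y_j)$, hence a bounded partition-rank decomposition. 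Each of these three steps is substantial, and none is visible in your sketch; the phrase ``peel off and recurse'' does not correspond to any step that actually occurs.
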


	This result was subsequently extended to include the case $d \ge char(\mF) $ by Kauffman and Lovett in \cite{kl}. 
	Bhowmick and Lovett \cite{bl} showed that the bound can be made independent of the field $ \mF $ in the sense that we can take $ C = C(s,d). $ 
	Returning to the case $ d < char(\mF) $, effective bounds for $ C $ were given by
	Mili\'cevi\'c \cite{Mi} who proved 
	\[ 
	C = A(d) ( 1+s ) ^{B(d)}.
	 \] 
	Independently, Janzer \cite{janzer}
	proved a similar, slightly weaker result with $s \log q $ instead of $s$.  
	
	\begin{conjecture}\label{linear-br}   For all finite fields $\mF$ with  $char(\mF)>d$ we have \linebreak $C(\mF,s,d) = A(d)(1+ s)$.
	\end{conjecture}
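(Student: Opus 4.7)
The plan is an inductive attack on the degree $d$, using the relative rank / relative bias correspondence developed in this paper to prevent the Cauchy--Schwarz losses from compounding across steps.

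For the base case $d=2$: since $\mathrm{char}(\mF)>2$, a quadratic form $P$ diagonalizes and the Gauss sum formula gives $bias(P)=q^{-\mathrm{rank}(M_P)/2}$, where $M_P$ is the associated symmetric matrix. Hence $bias(P)\ge q^{-s}$ forces $\mathrm{rank}(M_P)\le 2s$, which after diagonalization yields a Schmidt rank presentation of length $\le 2s$, linear in $s$ as required.

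For the inductive step, assume the linear bound in all degrees below $d$, and let $P$ have degree $d$ with $bias(P)\ge q^{-s}$. The standard derivative identity
\[
|\mE_{x}\chi(P(x))|^{2} = \mE_{h}\,\mE_{x}\chi(\partial_{h}P(x))
\]
together with a pigeonhole on $h$ produces a positive-density set of directions $h\in\mF^{n}$ such that $bias(\partial_{h}P)\ge q^{-O(s)}$; since $\partial_{h}P$ has degree $\le d-1$, the inductive hypothesis provides a Schmidt rank presentation of length $O_{d}(s)$ for each such $\partial_{h}P$.

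The principal obstacle is the passage from this parameter-dependent family of low-rank presentations of $\partial_{h}P$ to a single low-rank presentation of $P$ itself. A naive iteration loses a constant factor at every step and recovers only the polynomial-in-$s$ bound $(1+s)^{B(d)}$ of Milicevic. The plan is to avoid this loss by treating $\{\partial_{h}P\}_{h}$ as a filtered collection in the sense of this paper, establishing that it has bounded \emph{relative} rank over the filtration generated by its own lower-degree factors, and then --- using $\mathrm{char}(\mF)>d$ to invert the discrete derivative via a symmetrization / integration argument --- transferring the relative presentation back to $P$. The crux, which I expect to be the genuine obstruction, is showing that this relative-to-absolute step costs only an $O_{d}(1)$ additive term in the rank. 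A bias-linear sharpening of the main relative rank theorem of this paper would suffice, but obtaining such a linear form appears to require ideas beyond the current regularization procedure, which is why Conjecture~\ref{linear-br} is stated as such rather than proved.
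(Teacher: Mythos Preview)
This statement is a \emph{conjecture} in the paper, not a theorem; the paper does not give a proof of it. The paper only remarks that the case $d=3$ was established independently in \cite{akz} and \cite{CM}, and otherwise leaves the general case open. So there is no ``paper's own proof'' against which to compare your proposal.

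Your proposal is, by your own admission in the final paragraph, not a proof but a plan with an identified gap. Your assessment of where the difficulty lies is essentially correct: the base case $d=2$ is fine via diagonalization of the quadratic form, and the derivative/Cauchy--Schwarz reduction to degree $d-1$ is standard. The genuine obstruction, as you say, is converting the family of $O_d(s)$-rank presentations of $\partial_h P$ into a single $O_d(s)$-rank presentation of $P$. The machinery in this paper (Theorem \ref{Bias-Rank}, the regularization of Section \ref{regularization-sec}, and the nullstellensatz of Section \ref{nullstellensatz-sec}) gives only polynomial-in-$s$ control at each step, so iterating it recovers Milicevic's bound $A(d)(1+s)^{B(d)}$ rather than the conjectured linear bound. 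Your suggestion that a ``bias-linear sharpening of the main relative rank theorem would suffice'' is accurate but circular: such a sharpening is essentially equivalent to the conjecture itself, since the absolute case $\mcQ=\emptyset$ of a linear relative-bias-to-relative-rank theorem is exactly Conjecture \ref{linear-br}.

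In short: you have correctly diagnosed that this is open and correctly located the obstacle, but the proposal does not contain a new idea that closes the gap, and neither does the paper.
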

	
	Conjecture \ref{linear-br} was recently proved in the case $d=3$ in \cite{akz}, \cite{CM} independently. \\

	A key feature of high rank collections of polynomials, that can be easily derived from Theorem \ref{gt}, is that all fibers of the map $\mcP:\mathbb \mF^n \to \mathbb \mF^c$ are essentially of the same size. 
\begin{theorem}[Size of fibers]\label{atoms}
Let $d>0$. Let $\mF=\mF_q$ be a finite field of characteristic $>d$. There exists $A= A(d)$ such that the following holds:  For any $n$, any collection of polynomials $\mcP=(P_i)_{i=1}^c$  in $\mF[x_1,\ldots,x_n]$ of degrees $\le d$ and rank $>Ac^A$,  any $a, b \in \mF^c$ we have $||\mcP^{-1}(a)| / |\mcP^{-1}(b)| - 1| <q^{-1}$.
\end{theorem}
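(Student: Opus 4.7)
\emph{Proof plan.} My plan is to use Fourier inversion on $\mF^n$: write each fiber count $|\mcP^{-1}(a)|$ as a character sum, isolate the main term $q^{n-c}$, and show the remaining error is very small by invoking the bias-to-rank theorem. The error will decompose as a sum over $t \in \mF^c \setminus \{0\}$ of biases of the single polynomials $t \cdot \mcP := \sum_{i=1}^c t_i P_i$. By the very definition of the rank of a collection, each of these polynomials has rank at least $rk(\mcP) > Ac^A$, so the quantitative form of Theorem \ref{gt} — namely Milicevic's bound $C(\mF,s,d) = A(d)(1+s)^{B(d)}$, which applies in characteristic $>d$ — translates the rank hypothesis into a pointwise bias bound.

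Carrying this out, the orthogonality relation $q^{-1}\sum_{u \in \mF}\chi(u(y-a)) = \delta_{y,a}$ used coordinate-wise gives
\[
|\mcP^{-1}(a)| = \frac{q^n}{q^c}\sum_{t \in \mF^c}\chi(-t \cdot a)\,\mE_{x \in \mF^n}\chi(t \cdot \mcP(x)) =: q^{n-c}(1 + \ep_a),
\]
so that $|\ep_a| \le (q^c - 1)\max_{t \neq 0} bias(t \cdot \mcP)$. I would then apply Milicevic's bound at parameter $s = c + 3$, which requires only $rk(\mcP) > A(d)(c+4)^{B(d)}$; since this threshold is polynomial in $c$, a sufficiently large $A = A(d)$ makes $Ac^A$ dominate for every $c \ge 1$. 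The output is $bias(t \cdot \mcP) < q^{-(c+3)}$ for every nonzero $t$, and therefore $|\ep_a| \le q^{-3}$ uniformly in $a$.

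Finally, to convert this additive control on $\ep_a$ into a multiplicative control on ratios, I write
\[
\left|\frac{|\mcP^{-1}(a)|}{|\mcP^{-1}(b)|} - 1\right| = \frac{|\ep_a - \ep_b|}{|1 + \ep_b|} \le \frac{2q^{-3}}{1 - q^{-3}} < q^{-1}
\]
for every $q \ge 2$, completing the argument. The essential ingredient — and the only one beyond routine Fourier manipulations — is the polynomial-in-$s$ dependence in Milicevic's refinement of Theorem \ref{gt}; the main obstacle to obtaining a rank threshold of the form $Ac^A$ would be the absence of such a polynomial bound, since without it the threshold needed to make each $bias(t \cdot \mcP)$ smaller than $q^{-c-O(1)}$ might grow faster than polynomially in $c$.
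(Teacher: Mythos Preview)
Your argument is correct and is essentially the derivation the paper has in mind: the text does not give a standalone proof of Theorem~\ref{atoms}, but says it ``can be easily derived from Theorem~\ref{gt}'', and the explicit version of this Fourier-inversion-plus-bias-bound argument appears later as the proof of Lemma~\ref{Atom-size}. Your identification of Mili\'cevi\'c's polynomial bound as the ingredient that converts the rank hypothesis into a threshold of the shape $Ac^A$ (with $A=A(d)$ independent of $\mF$) is exactly right.
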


Using model theoretic techniques one can translate the data in Theorem \ref{atoms} to an estimate  of the dimension of the fibers of the map $\mcP:\mathbb \mA^n \to \mathbb A^c$  for any algebraically closed field of characteristic zero or  $>\max \deg P_i$, see e.g. \cite{kz-survey,kz-extension}.  \\

In many proofs using the notion of Schmidt rank, a key procedure is regularization (e.g. \cite{S}, \cite{bl},  \cite{cm}): given a collection of polynomials, one applies procedure replacing the original collection of polynomials $\mcP$ with a new collection $\mcQ$ that is of high Schmidt rank compared to its size, (so Theorem \ref{atoms}  holds) and such that the ideal generated by $\mcP$ is contained in the 
ideal generated by $\mcQ$.  The drawback in this regularization procedures is that the size of $\mcQ$ is far worse than even tower exponential in the size of $\mcP$ (the bound that can be derived from Schmidts argument was worked out by Wooley in \cite{W}). \\

We introduce a new notion of rank {\em relative rank}, associated with a filtered collection of polynomials, both in the algebraic and analytic contexts. We replace the ambient space $ \mF^n $ by certain affine varieties and we replace Schmidt rank by an appropriate relative rank.  The definition of relative rank is somewhat technical; we defer it to the next section (Definition \ref{relative-rank}).
We remark that the new notion of relative rank coincided with the Schmidt rank and the analytic rank when the filtration consists of one set,  but is different otherwise.\\ 

Our main result (Theorem \ref{Bias-rank-poly}), described in detail in the next section, is a relation between relative rank and relative bias, similar to the relation described above between rank and bias. 
The key advantage in our new notion of rank, is that while it retains many of the properties of the ranks described above, for example Theorem \ref{atoms} remains valid, the  regularization procedure with respect to this notion is {\em polynomial} in the size of the original collection. As such we are able to give good quantitative bounds on a variety of problems in which a regularization procedure is used.  We provide an example of an algebraic application in Theorem \ref{reg-seq}.\\ 

In the interest of exposition, we will start by presenting a special case of our main result. This special case is actually the result we initially set out to prove. First we give our definitions of relative rank and relative bias.

	\begin{definition}[Relative rank]\label{relative-rank-int}
			 The relative rank of a homogeneous polynomial $P$ on a collection of homogeneous polynomials $ \mcQ = (Q_1,\ldots,Q_m) $ is
			\[
			rk_\mcQ (P) := \min \{rk(P+\sum_{i=1}^m R_i Q_i): \deg(R_i)+\deg(Q_i) \le \deg(P)\ \forall i\in[m] \}.
			\]
			Note that whenever $ \deg(Q_i) > \deg(P) $ this implies $ R_i = 0. $ For general polynomials, with $ \tilde P,\tilde \mcQ $ the corresponding homogeneous polynomial and collection,
			set $ rk_\mcQ (P) := rk_{\tilde\mcQ} (\tilde P). $
			\end{definition}

		\begin{definition}[Relative bias]\label{relative-bias}
			The relative bias of a function $P:\mF^n\to \mF$ on a subset $X\subset
			\mF^n$ is 
			\[ 
			bias_X (P) = \left| \mE_{x\in X} \chi(P(x)) \right|.
			\]
	\end{definition}

 	For a collection $ \mcQ = (Q_i)_{i\in[m]} $ of polynomials in $ \mF[x_1,\ldots,x_n] $, denote the zero locus by 
	\[ 
	Z(\mathcal{Q}) = \left\{ x\in\mF^n: Q_i(x) = 0\ \forall i\in[m] \right\}.
	 \]
	A special case of our main result is then:
	
	\begin{theorem}
		Let $ \mF $ be a finite field and $0\le d < char(\mF).$ There exist  constants $A(d), B(d)$ such that if $ \mcQ = (Q_i)_{i=1}^m $ is a collection of polynomials with degrees $ \le d $ with $ rk(\mcQ) > A(m+s)^B $ and $ P $ is a polynomial of degree $ \le d $ with\\
		 $ bias_{Z(\mcQ)}(P) \ge q^{-s} $ then we have 
		\[ 
		rk_\mcQ (P) \le A(1+s)^B.
		 \]
	\end{theorem}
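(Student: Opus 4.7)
The strategy is to translate the large bias of $P$ on the variety $Z(\mcQ)$ into a large bias of some shift $P + \sum R_i Q_i$ on all of $\mF^n$, and then extract a low-rank decomposition of that shift via the Bias--Rank Theorem \ref{gt}. Character orthogonality gives $1_{Z(\mcQ)}(x) = q^{-m} \sum_{a \in \mF^m} \chi(a \cdot Q(x))$, while the assumption $rk(\mcQ) > A(m+s)^B$, through Theorem \ref{atoms} (and Theorem \ref{gt} applied to $a \cdot Q$ for $a \ne 0$), gives $|Z(\mcQ)|/q^{n-m} = 1 + O(q^{-s-1})$. Combining these with the hypothesis $bias_{Z(\mcQ)}(P) \ge q^{-s}$ yields
\[
\left| \sum_{a \in \mF^m} \mE_{x \in \mF^n} \chi\bigl(P(x) + a \cdot Q(x)\bigr) \right| \;\ge\; (1 - o(1))\, q^{-s}.
\]

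A naive pigeonhole on the left-hand side now produces $a \in \mF^m$ with $|\mE_x \chi(P + a \cdot Q)| \ge q^{-s-m}$, so by Theorem \ref{gt} one gets $rk(P + a \cdot Q) \le A(d)(1 + s + m)^{B(d)}$. Since each constant $a_i$ satisfies the degree constraint $\deg(a_i) + \deg(Q_i) \le \deg(P)$ from Definition \ref{relative-rank-int}, this already gives a preliminary bound $rk_\mcQ(P) \le A(d)(1+s+m)^{B(d)}$, which matches the qualitative form of the theorem but carries an unwanted dependence on $m$.

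The main technical obstacle is removing this $m$-dependence to recover the stated bound $A(1+s)^B$, and this is where the full strength of the rank hypothesis $rk(\mcQ) > A(m+s)^B$ must be used. Two avenues appear natural. First, exploit that Theorem \ref{gt} forces $|\mE_x \chi(a \cdot Q(x))| < q^{-(m+s+1)}$ for every $a \ne 0$; combined with the subadditivity $rk(U+V) \le rk(U)+rk(V)$, this prevents the character-sum mass from being uniformly spread over all $q^m$ values of $a$ and localizes it to a small, structured set. Second, enlarge the search space from constants $a \in \mF^m$ to the full admissible space $V = \bigoplus_i \mF[x]_{\le d - \deg(Q_i)}$ of relative-rank witnesses: the orthogonality identity $\mE_{R \in V} \chi(\sum_i R_i(Q_i(x)-Q_i(y))) = \mathbf{1}_{Q(x)=Q(y)}$ identifies $\mE_{R\in V} |\mE_x \chi(P + \sum R_i Q_i)|^2$ with a fiber-wise bias of $P$ over the partition of $\mF^n$ by $Q$, and a higher-moment or $L^{2k}$-concentration argument can then isolate a single tuple $R \in V$ for which $|\mE_x \chi(P + \sum R_i Q_i)| \ge q^{-A(d)(1+s)^{B(d)}}$. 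Either route, once pushed through, yields an admissible $R$ with $rk(P + \sum R_i Q_i) \le A(d)(1+s)^{B(d)}$ via Theorem \ref{gt}, and this is exactly the required bound on $rk_\mcQ(P)$.
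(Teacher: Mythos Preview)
Your initial reduction via character orthogonality is correct and yields the preliminary bound $rk_\mcQ(P)\le A(1+s+m)^B$ you wrote down. The genuine gap is in removing the $m$-dependence, and neither of your two avenues closes it.

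\textbf{Why the first avenue fails.} The subadditivity argument you sketch only bites when constant shifts $a\cdot\mcQ$ actually change the top-degree part of $P+a\cdot\mcQ$. If every $Q_i$ has degree strictly less than $\deg P$, then $\widetilde{P+a\cdot\mcQ}=\tilde P$ for \emph{every} $a\in\mF^m$, so $rk(P+a\cdot\mcQ)=rk(P)$ is constant in $a$ and subadditivity gives no localization whatsoever. Pigeonholing then leaves you stuck at $rk(P)\le A(1+s+m)^B$, and there is no way to peel off the $m$. The paper flags exactly this right after stating the theorem: ``when $m\le s$ or when $\mcQ$ is composed of polynomials of degree $\ge\deg(P)$ the above theorem in fact follows from their result. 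But it is new in the case that $\mcQ$ is composed of $\gg s$ polynomials of lower degree than $P$.'' Your first avenue recovers precisely the already-known cases and misses the new one.

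\textbf{Why the second avenue fails.} The orthogonality identity you wrote is incorrect once the $R_i$ are non-constant: expanding $|\mE_x\chi(P+\sum R_iQ_i)|^2$ gives $\mE_{x,y}\chi(P(x)-P(y))\chi(\sum_i R_i(x)Q_i(x)-R_i(y)Q_i(y))$, and the second factor is \emph{not} $\chi(\sum_i R_i\cdot(Q_i(x)-Q_i(y)))$ for polynomial $R_i$. So averaging over $R\in V$ does not detect the fiber partition $\{Q(x)=Q(y)\}$, and the $L^2$ (or $L^{2k}$) concentration argument you propose does not get off the ground.

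\textbf{What the paper actually does.} The theorem is a special case of Theorem~\ref{Bias-rank-poly}. The hard case ($\deg Q_i<\deg P$) is handled by polarizing $P$ and $\mcQ$ to multilinear maps on $V^{d+1}$ (Section~\ref{multilinear-reduction}), proving the multilinear analogue Theorem~\ref{Bias-Rank} by induction on degree, and restricting back to the diagonal. The inductive step for Theorem~\ref{Bias-Rank} is substantial: one finds many approximate vanishing loci for $P$ (Proposition~\ref{approx}), regularizes them (Claim~\ref{Regularization}), glues them via Corollary~\ref{gluing-many} into a single small tower $\mcR$ with $P\restriction_{Z(\mcQ\cup\mcR)}\equiv 0$, and finally invokes a Nullstellensatz (Theorem~\ref{Nullstellensatz}) to conclude $rk_\mcQ(P)\le\dim\mcR\le A(1+s)^B$. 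None of this machinery is accessible from Milicevic's theorem as a black box; the point is that one must re-run the entire bias-to-rank argument \emph{relative to} $Z(\mcQ)$, which is why the paper develops relative rank and relative bias for towers.
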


	We briefly pause here and compare this to the result of Milicevic (and Janzer). For $ \mcQ = \emptyset $ this gives a new proof of their result. Moreover, when $ m \le s $ or when $ \mcQ $ is composed of polynomials of degree $ \ge \deg(P) $ the above theorem in fact follows from their result. But it is new in the case that $ \mcQ $ is composed of $ \gg s $ polynomials of lower degree than $ P. $ \\
	
	\emph{Acknowledgements.} The first author would like to thank the second author for introducing him to this question and its various applications. He would also like to thank his wife Noa for her constant support and encouragement. 
	

	\section{Main theorem}
	
	We now give our definitions in full and state our main theorem. 
	\begin{definition}\label{relative-rank}
		\begin{enumerate}
			\item The relative rank of a collection of homogeneous polynomials $\mcP = (P_i)_{i\in[m]}$ on another collection of polynomials $\mcQ$ is
			\[
			rk_\mcQ (\mcP) = \min \left\lbrace rk_\mcQ \left( a\cdot \mcP \right) :\ 0\neq a\in\mF^m \right\rbrace. 
			\]
			If $ \mcP $ is a general collection of polynomials with corresponding homogeneous collection $ \tilde\mcP, $ then $ 	rk_\mcQ (\mcP) = 	rk_\mcQ (\tilde \mcP). $
			\item A tower
			\[
			\mcQ = (Q_i)_{i\in [h]}
			\]
			of \textbf{height} $h$ is composed of $h$ collections of polynomials $\mcQ_i = (Q_{i,j}) _{j\in[m_i]}$ which we call \textbf{layers} such that the polynomials in each layer all have the same degree $d_i.$ The \textbf{degree} of the tower is $ \max \left(d_1,\ldots,d_h \right).$ The \textbf{dimension} of the tower is $ m_1+\ldots+m_h. $ We denote the truncated tower $\mcQ_{<i} = (\mcQ_j)_{j\in [i-1]}.$ 
			
			\item The tower $ \mcQ $ is $(A,B,s)$-regular if for every $i\in[h]$ we have  
			\[
			rk_{\mcQ_{<i}} (\mcQ_i) > A(m_i+m_{i+1}+\ldots+m_h+s)^B. 
			\]
		\end{enumerate}
	\end{definition}
		
	The main theorem we prove is the following: 
	\begin{theorem}[Relative bias implies relative low rank]\label{Bias-rank-poly}
		There exist  constants $A(d,H), B(d,H)$ such that if $\mcQ$ is an $(A,B,s)$-regular tower of degree $\le d < char(\mF)$ and height $\le H,$ and $P$ is a polynomial of degree $\le d$ with
		\[
			bias_{Z(\mcQ)}(P) \ge q^{-s},
		\]
		then
		\[
			rk_\mcQ (P) \le A(1+s)^B. 
		\] 
	\end{theorem}

	Theorem \ref{Bias-rank-poly} implies that relatively regular collections of polynomials enjoy similar equidistribution properties to those of high rank collections of polynomials. In fact, all the results in \cite{kz-extension, kz-survey, kz-uniform}, \cite{bl} which were proved for collections of polynomials of high rank, are also valid for relatively regular collections of polynomials. Some examples of results which transfer seamlessly to the relatively regular setting are given in the Appendix.

   The dependence of the constants $A,B$ on the parameters $d,H$ which can be extracted from our proof is quite weak and involves recursively defined functions. This is due to the rather involved inductive argument. It is natural to ask whether this result holds with a stronger dependence on $d,H.$

   \textbf{Question:} Can the constants $A,B$ in theorem \ref{Bias-rank-poly} be taken to be "reasonable" functions of $d,H$? Say, a tower of exponentials of bounded height as in Mili\'cevi\'c's result \cite{Mi}? 
				
	For a collection of polynomials $ \mcQ, $ we write $ I(\mcQ) $ for the ideal they generate. In many applications of the notion of rank (e.g \cite{S}, \cite{cm}) one is given a collection of polynomials, to which one applies a regularization process, replacing the original collection $\mcQ$ with a new regular collection $\mcQ'$, such that any $\mcQ \subset I(\mcQ').$ The issue is that the regularization process is very costly (see \cite{W}). One of the advantages of relative rank is that it allows a very efficient process of regularization.

	\begin{theorem}[Polynomial regularity]\label{polyreg}
			Let $A,B,d$ be given. There exist constants $ C(A,B,d),D(A,B,d) $
			such that if $\mcQ$ is a collection of $m$ homogeneous polynomials of positive degrees $\le d$, then there exists a polynomial tower $\mcQ'$  of dimension $\le C(m+s)^D,$ degree $ \le d $ and height $\le d$ such that $\mcQ'$ is
			$(A,B,s)$ regular, and $\mcQ\subset I(\mcQ').$  
	\end{theorem}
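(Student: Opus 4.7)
\medskip
\noindent\textbf{Proof plan.} The strategy is a greedy iterative reduction. Initialize $\mcQ' := \mcQ$ partitioned into at most $d$ layers by degree (with empty layers where no polynomial of that degree appears). Throughout, maintain the invariant $\mcQ \subset I(\mcQ')$. While $\mcQ'$ fails to be $(A,B,s)$-regular, reduce the \emph{highest} non-regular layer by one polynomial, paying by inserting explicit lower-degree replacements into lower layers. The resulting tower has height and degree $\le d$ by construction, and its total dimension will be polynomial in $m+s$ by a reverse induction on the layer index.

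The reduction step goes as follows. Let $i^*$ be the largest index with
\[
rk_{\mcQ'_{<i^*}}(\mcQ'_{i^*}) \le A\bigl(m_{i^*}+m_{i^*+1}+\cdots+m_h+s\bigr)^B.
\]
Unwinding Definition \ref{relative-rank}, there exist a nonzero $a \in \mF^{m_{i^*}}$, polynomials $R_j$ satisfying $\deg R_j + \deg Q_{<i^*,j} \le d_{i^*}$, and homogeneous polynomials $S_1,T_1,\ldots,S_K,T_K$ of degrees strictly less than $d_{i^*}$ with $K \le A(m_{i^*}+\cdots+m_h+s)^B$, such that
\[
a \cdot \mcQ'_{i^*} + \sum_j R_j Q_{<i^*,j} = \sum_{k=1}^K S_k T_k.
\]
Pick any $i_0$ with $a_{i_0}\ne 0$, delete $Q_{i^*,i_0}$ from $\mcQ'_{i^*}$, and insert each $S_k$ and $T_k$ into the layer of its degree. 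Solving the identity for $Q_{i^*,i_0}$ writes it as a combination of the surviving $Q_{i^*,j}$, elements of $\mcQ'_{<i^*}$, and products $S_k T_k$, all of which lie in the ideal of the new $\mcQ'$; hence $\mcQ \subset I(\mcQ')$ is preserved. Crucially, layers of index greater than $i^*$ are untouched.

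Termination and bounds. Every reduction step strictly decreases $m_{i^*}$ while leaving $m_j$ unchanged for $j > i^*$, so the lex-ordered tuple $(m_h, m_{h-1}, \ldots, m_1)$ strictly decreases, forcing termination. Let $N_k$ be the largest value $m_k$ ever attains and $P_k$ the total number of reductions applied to layer $k$; each such reduction removes one polynomial from layer $k$, giving $P_k \le N_k$, and each reduction on layer $j>k$ contributes at most $2A(N_j+\cdots+N_h+s)^B$ polynomials below, yielding
\[
N_k \le m + \sum_{j>k} P_j \cdot 2A(N_j+\cdots+N_h+s)^B.
\]
A reverse induction starting from $N_h \le m$ yields $N_k \le C_k(m+s)^{(B+1)^{d-k}}$, so $\sum_k N_k \le C(m+s)^D$ with $C, D$ depending only on $A, B, d$. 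The main subtlety is that inserting polynomials into lower layers can break the regularity of higher layers that were previously regular (since $\mcQ'_{<j}$ grows while the threshold $A(m_j+\cdots+m_h+s)^B$ is unchanged); the lex-order potential rules out cycling, and since each threshold depends only on the sizes of layers of its own and higher indices---which no reduction can ever increase---the reverse induction goes through cleanly.
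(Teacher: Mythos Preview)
Your argument is essentially the same as the paper's (carried out there in the multi-affine setting as Claim~\ref{Regularization}): iteratively trade a polynomial witnessing low relative rank for its lower-degree factors, and bound the total work by a top-down recurrence on the layers. The only cosmetic difference is that the paper precomputes fixed thresholds $n_i$ in advance and runs the algorithm against those, whereas you use the running thresholds $A(m_{i^*}+\cdots+m_h+s)^B$; both yield the same polynomial bound.

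Two small points to tighten. First, the inequality $P_k\le N_k$ is not quite justified: if layer $k$ repeatedly receives polynomials from above and sheds them via reductions, the total number of removals $P_k$ can exceed the peak size $N_k$. The fix is to track instead $A_k$, the total number of polynomials ever placed in layer $k$; then $P_k\le A_k$ trivially, the same recurrence $A_k\le m+\sum_{j>k}A_j\cdot 2A(A_j+\cdots+A_h+s)^B$ holds, and the reverse induction goes through unchanged. Second, by Definition~\ref{relative-rank-int} the low-rank identity you write down is only guaranteed at the level of top-degree homogeneous parts, so solving it for $Q_{i^*,i_0}$ leaves a lower-degree remainder. Either pass to homogeneous components of $\mcQ$ at the outset (increasing $m$ by at most a factor of $d$), or also insert that remainder into the appropriate lower layers, exactly as the paper does with its $u_J$ terms.
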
 
		
		\begin{definition}
			A collection of polynomials $\mcQ = (Q_i)_{i=1}^m \subset \mF[x_1,\ldots,x_n]$ is {\em $s$-uniform} if 
			$$  \left| \frac{|Z(\mcQ)|}{q^{n-m}} -1 \right|  \le q^{-s}.$$
		\end{definition}
		As a corollary of Theorem \ref{Bias-rank-poly} we obtain:
		
		\begin{lemma}[Regular varieties are of the expected size]
			There exist constants $A(d,H),B(d,H)$ such that any $(A,B,s)$-regular polynomial tower $\mcQ$ of degree $\le d < char(\mF)$ and height $\le H$ is $s$-uniform.
		\end{lemma}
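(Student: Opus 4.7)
The plan is to prove the estimate by induction on the height $h$ of the tower, combined with a Fourier expansion and the contrapositive of Theorem \ref{Bias-rank-poly}. The base case $h = 0$ is immediate since $Z(\emptyset) = \mF^n$.

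For the inductive step, let $\mcQ$ be $(A,B,s)$-regular of height $h \le H$ with top layer $\mcQ_h = (Q_{h,j})_{j \in [m_h]}$. I would first use character orthogonality to write
\[
|Z(\mcQ)| = \sum_{x \in Z(\mcQ_{<h})} \prod_{j=1}^{m_h} \frac{1}{q} \sum_{t_j \in \mF} \chi(t_j Q_{h,j}(x)) = \frac{|Z(\mcQ_{<h})|}{q^{m_h}} \Bigl(1 + \sum_{0 \neq t \in \mF^{m_h}} \mE_{x \in Z(\mcQ_{<h})} \chi(t \cdot \mcQ_h(x))\Bigr).
\]
Inspecting Definition \ref{relative-rank}(3) shows that $\mcQ_{<h}$ is itself $(A,B,s+m_h)$-regular of height $h - 1$, since $m_i + \ldots + m_{h-1} + (s+m_h) = m_i + \ldots + m_h + s$. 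Moreover, for any $0 \neq t \in \mF^{m_h}$ we have $rk_{\mcQ_{<h}}(t \cdot \mcQ_h) \ge rk_{\mcQ_{<h}}(\mcQ_h) > A(m_h + s)^B$ by the regularity of $\mcQ$. Choosing $A(d,H), B(d,H)$ to dominate---with a constant-factor slack---both the constants of Theorem \ref{Bias-rank-poly} at all heights $\le H$ and the constants of the current lemma at height $H-1$, the contrapositive of Theorem \ref{Bias-rank-poly} applied to the tower $\mcQ_{<h}$ and the polynomial $t \cdot \mcQ_h$ yields $bias_{Z(\mcQ_{<h})}(t \cdot \mcQ_h) < q^{-(m_h+s)}$. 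Summing over the $q^{m_h}-1$ nonzero $t$ gives
\[
\Bigl|\frac{|Z(\mcQ)|\, q^{m_h}}{|Z(\mcQ_{<h})|} - 1\Bigr| < (q^{m_h}-1)\, q^{-(m_h+s)} < q^{-s}.
\]

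The induction then closes by combining this with $\bigl| |Z(\mcQ_{<h})|/q^{n-(m-m_h)} - 1 \bigr| \le q^{-(s+m_h)}$, which is the inductive hypothesis applied to the height-$(h-1)$ tower $\mcQ_{<h}$, via the elementary estimate $|ab - 1| \le |a|\, |b-1| + |a-1|$. The main bookkeeping obstacle is that this combination loses a small multiplicative factor at each step; I would handle it by strengthening the inductive hypothesis to an $s$-uniformity with improved leading constant (equivalently, replacing $s$ by $s + c(H)$ throughout) and by choosing $A(d,H), B(d,H)$ inductively large enough to absorb all such losses across the $H$-fold recursion. Apart from this constant chasing, the argument is a direct combination of character orthogonality with Theorem \ref{Bias-rank-poly}.
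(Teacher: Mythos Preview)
Your proposal is correct and follows essentially the same approach as the paper's proof (Lemma~\ref{Atom-size}, with the polynomial version Lemma~\ref{atom-size-poly} declared ``identical''): character orthogonality on each layer, the contrapositive of the bias--rank theorem to bound the nontrivial character sums by $q^{-(s+m_h+O(1))}$, and then combining across layers. The only cosmetic difference is that the paper writes the argument as a single product $\prod_i (1+e_i)$ over all layers after shifting the regularity parameter to $s+H+2$ upfront, whereas you phrase it as an induction on height peeling off one layer at a time; these are the same argument and handle the constant bookkeeping in equivalent ways.
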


	In particular we obtain the following algebraic consequence: 
	\begin{theorem}\label{reg-seq}
		Let $d>0$. Let $\mF$ be an algebraically closed field of characteristic zero or $>d$. There exists a constant $A=A(d)$ such that if $\mcP$ is a collection of $c$ homogeneous polynomials of positive degrees $\le d,$ then there exists a collection $\mcQ$ of $ \le Ac^A$ homogeneous polynomials of degrees $\le d$ such that $\mcQ$ is a regular sequence, and $P_i \in \mcI(\mcQ)$.  
	\end{theorem}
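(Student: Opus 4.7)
The plan is to combine the polynomial regularization of Theorem \ref{polyreg} with the lemma that regular towers are of the expected size, and then transfer the resulting dimension statements from finite fields to $\mF$ using the model-theoretic techniques alluded to after Theorem \ref{atoms}. Fix $A,B$ large enough that the ``regular varieties are of the expected size'' lemma applies for towers of height $\le d$. Apply Theorem \ref{polyreg} to $\mcP$ with, say, $s=1$, to obtain a polynomial tower $\mcQ'$ of height $\le d$, degree $\le d$, and dimension $N\le A c^A$ (after absorbing $d$-dependent constants into $A$), which is $(A,B,1)$-regular and satisfies $\mcP\subset I(\mcQ')$. Flatten $\mcQ'$ layer by layer into an ordered sequence $Q_1,\ldots,Q_N$; the claim is that this is a regular sequence.

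The key observation is that every prefix $(Q_1,\ldots,Q_k)$ of the flattened sequence is itself an $(A,B,1)$-regular tower of height $\le d$: when the cut falls in the middle of some layer, the sub-layer formed by the first few polynomials of that layer has relative rank over the earlier layers at least the relative rank of the full layer, since forcing some coordinates of the coefficient vector $a$ to vanish only restricts the minimization in the definition of relative rank. Applying the ``regular varieties are of the expected size'' lemma to every such prefix yields, over a finite field of characteristic $>d$,
\[
|Z(Q_1,\ldots,Q_k)(\mF_q)| = q^{n-k}\bigl(1+O(q^{-1})\bigr),
\]
which by standard Lang--Weil type bounds forces $\dim Z(Q_1,\ldots,Q_k)=n-k$ over $\overline{\mF}_q$. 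The model-theoretic transfer of \cite{kz-survey,kz-extension} extends this dimension statement to any algebraically closed field $\mF$ of characteristic $0$ or $>d$. Since $\mF[x_1,\ldots,x_n]$ is Cohen--Macaulay, the codimension dropping by exactly one at each step is equivalent to $(Q_1,\ldots,Q_N)$ forming a regular sequence, completing the proof with $\mcQ=(Q_1,\ldots,Q_N)$.

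The main obstacle is the uniform application of the ``expected size'' lemma to \emph{every} prefix rather than only to complete layers, together with the model-theoretic passage from finite-field cardinality estimates to algebraic dimension statements over a general algebraically closed field of the right characteristic. Both are essentially bookkeeping once Theorem \ref{polyreg} is in hand; the genuinely new content of the theorem lies in the polynomial dimension bound $N\le Ac^A$, which is a direct consequence of that regularization result.
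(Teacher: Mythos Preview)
Your approach is correct and takes a genuinely different (and in some ways more direct) route than the paper's. The paper does not deduce the regular-sequence property from the ``expected size'' lemma applied to prefixes; instead, in the Appendix it adapts the machinery of \cite{kz-extension}, proving that for a sufficiently regular tower $\bar P$ the restriction-to-affine-subspaces map $\kappa_{\bar P}:\mathrm{Aff}_m(V)\to\mcP_{\bar d}(\mF^m)$ is $q^{-s}$-uniform over finite fields (Theorem~\ref{A1}, via the key Theorem~\ref{need} on preservation of relative rank under restriction to $m$-dimensional affine subspaces). The regular-sequence conclusion is then read off as item~(4) of the corollary that follows, alongside surjectivity of $\kappa_{\bar P}$, equidimensionality of its fibers, and flatness.

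Your route bypasses the $\kappa_{\bar P}$ map entirely: once Lemma~\ref{atom-size-poly} is available, your prefix observation and the Cohen--Macaulay property of the polynomial ring give the regular-sequence statement with no additional combinatorial input. What this buys is simplicity for the specific goal of Theorem~\ref{reg-seq}; what the paper's route buys is the considerably stronger conclusion that $\kappa_{\bar P}$ is flat with equidimensional fibers, which is needed for the other applications in the Appendix (in particular Theorems~\ref{w} and~\ref{w1}). Two small caveats on your write-up: the passage from a point count over a single $\mF_q$ to a dimension statement over $\overline{\mF}_q$ is not literally Lang--Weil (a top-dimensional component invisible over $\mF_q$ would defeat that), so this step really does need the model-theoretic transfer you invoke rather than being a separate preliminary; and the Cohen--Macaulay equivalence ``codimension drops by one at each step $\Leftrightarrow$ regular sequence'' is cleanest when the $Q_i$ are homogeneous (as in the abstract's formulation), which the regularization of Theorem~\ref{polyreg} does preserve when $\mcP$ is homogeneous.
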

	
	We prove Theorem \ref{reg-seq} in the Appendix; it follows from Theorem \ref{polyreg} and an adaptation of the results in \cite{kz-extension} to the context of relative rank.  \\
	
	We also obtain a robust Nullstellensatz result for regular collections.
	
	\begin{theorem}[Robust Nullstellensatz for regular collections]\label{null-poly}
		There exist constants $A(d,H),B(d,H),s(d)$ such that if $\mcQ = (Q_{i,j})_{i\in[h],j\in[m_i]} $ is an $(A,B,s)$-regular polynomial tower of deg $\le d < char(\mF)$ and height $ \le H, $ and $P$  is a polynomial of degree $ \le d $ which vanishes $ q^{-s} $-a.e. on $ Z(\mcQ), $ then there exist polynomials $ R_{i,j} $ satisfying $ \deg(R_{i,j}) + \deg(Q_{i,j}) \le \deg(P) $ and 
		\[ 
		P = \sum_{i,j} R_{i,j}Q_{i,j}.
		 \] 
	\end{theorem}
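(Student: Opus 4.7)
The plan is to combine Theorem \ref{Bias-rank-poly} with the expected-size lemma in two stages: reduce $P$ modulo $\mcQ$ to a polynomial $P^*$ of bounded Schmidt rank, and then force $P^*$ to be zero via a dimension-counting argument on an augmented regular tower.

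First I would translate the $q^{-s}$-a.e.\ vanishing hypothesis into a bias bound. Writing $\epsilon := |\{x \in Z(\mcQ) : P(x) \neq 0\}|/|Z(\mcQ)| \le q^{-s}$, the triangle inequality gives $bias_{Z(\mcQ)}(P) \ge 1 - 2\epsilon \ge 1 - 2q^{-s}$, and the same estimate holds for every nonzero scalar multiple $tP$. Once $s(d) \ge 2$ this bias is at least $q^{-1}$, so Theorem \ref{Bias-rank-poly} (applied with effective bias exponent at most $1$) produces constants $A(d,H)$ and $B(d,H)$ with $rk_\mcQ(P) \le A(d,H) \cdot 2^{B(d,H)} =: C$. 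I then pick $R_{i,j}$ attaining this minimum under the degree constraints and set $P^* := P - \sum_{i,j} R_{i,j} Q_{i,j}$. Then $rk(P^*) = rk_\mcQ(P) \le C$; by the minimality defining $P^*$, no further reduction modulo $\mcQ$ decreases its Schmidt rank, so in fact $rk_\mcQ(P^*) = rk(P^*)$. Since $\sum R_{i,j} Q_{i,j}$ vanishes identically on $Z(\mcQ)$, $P^*$ inherits the $q^{-s}$-a.e.\ vanishing there.

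The crux is to show $P^* = 0$. Assume for contradiction $P^* \neq 0$ and write the low Schmidt-rank decomposition $P^* = \sum_{k=1}^{rk(P^*)} T_k S_k$ with $\deg T_k + \deg S_k = \deg P^* \le d$. Form the augmented collection $\mcQ^+ := \mcQ \cup \{P^*\}$ and apply the polynomial regularization Theorem \ref{polyreg} with strong parameters $(A', B', s')$ to produce an $(A', B', s')$-regular tower $\mcR$ of dimension polynomial in $m + 1$ with $\mcQ^+ \subset I(\mcR)$, and in particular $Z(\mcR) \subset Z(\mcQ) \cap Z(P^*)$. The ``Regular varieties are of expected size'' lemma applied to $\mcR$ yields a precise count $|Z(\mcR)| = (1 + O(q^{-s'})) q^{n - \dim \mcR}$, which one wants to compare against the lower bound $|Z(\mcQ) \cap Z(P^*)| \ge (1 - q^{-s}) q^{n-m}$ inherited from $\mcQ$'s own regularity. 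Since $\dim \mcR$ is strictly bigger than $m$, the expected size is much smaller than the hypothesized lower bound, and this incompatibility is what one hopes to leverage.

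The main obstacle is that the naive containment $Z(\mcR) \subset Z(\mcQ) \cap Z(P^*)$ only gives an inequality in the wrong direction to close the argument. To turn the size gap into a genuine contradiction one must refine the approach by iteratively applying Theorem \ref{Bias-rank-poly} on $\mcR$ to obtain sharp equidistribution of $P^*$ relative to $\mcR$, in the spirit of the adaptation of the equidistribution results of \cite{kz-extension} to the relative-rank setting referenced after Theorem \ref{reg-seq}. The constants $A(d,H), B(d,H), s(d)$ must then be chosen large enough to simultaneously satisfy the regularity threshold on $\mcQ$, the relative-rank bound coming out of Theorem \ref{Bias-rank-poly}, and the regularization budget of Theorem \ref{polyreg}; if any of the three is undersized, one of the estimates being combined collapses.
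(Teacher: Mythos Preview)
Your proposal has a genuine gap at the crux step, and you correctly identify it yourself. Applying Theorem \ref{Bias-rank-poly} only yields $rk_\mcQ(P) \le C$ for some constant $C$; it does \emph{not} give $rk_\mcQ(P)=0$, which is what the statement demands (after induction on degree). Your attempt to close the gap by regularizing $\mcQ\cup\{P^*\}$ to $\mcR$ and comparing $|Z(\mcR)|$ with $|Z(\mcQ)\cap Z(P^*)|$ runs the inequality the wrong way, as you note: $Z(\mcR)$ being small inside a large set is no contradiction. More fundamentally, even if you managed to show $P^*\equiv 0$ on $Z(\mcQ)$, concluding $P^*=0$ as a polynomial (or equivalently $P^*\in I(\mcQ)$ with the required degree bounds) is precisely the Nullstellensatz you are trying to prove, so the argument is circular at that point. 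The vague reference to ``iteratively applying Theorem \ref{Bias-rank-poly}'' does not escape this: no number of applications of a bounded-rank conclusion yields rank zero.

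The paper's proof takes a completely different route. It proceeds by induction on $\deg(P)$. For the inductive step, one passes to the polarization $\bar P$ and uses the Cauchy--Schwarz lemma (Lemma \ref{CS}) together with Fubini to show that $\bar P$ vanishes $q^{-s}$-a.e.\ on the multilinear variety $Z(\mcQ(k))$. Then the multilinear Nullstellensatz (Theorem \ref{Nullstellensatz}) is invoked to conclude $prk_{\mcQ(k)}(\bar P)=0$ --- this is the key input you are missing, and it is a substantially harder result than Theorem \ref{Bias-rank-poly}. Restricting to the diagonal gives $rk_{\tilde\mcQ}(\tilde P)=0$, so the top-degree homogeneous part of $P$ lies in the ideal with the correct degree constraints; subtracting it off drops the degree and the induction hypothesis finishes. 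The moral is that the bias-to-rank theorem is too coarse here: one needs the exact vanishing of relative rank supplied by the multilinear Nullstellensatz.
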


	In order to prove Theorem \ref{Bias-rank-poly}, as in \cite{Mi,janzer}, we first reduce it to an analogous statement for multilinear maps. However, the reduction of Theorem \ref{Bias-rank-poly} to its multi-linear version is not so straightforward in our case. This will be done by induction on the degree $ d $ and will require some intermediate results that appear during the proof of the theorem in the multi-linear case. Therefore, we postpone it to Section \ref{multilinear-reduction}.
	We now turn to the multi-linear case.

	\subsection{Multi-linear definitions and main theorem}
	Let $V_1,\ldots,V_k$ be finite-dimensional vector spaces over the finite field $\mF.$ For $I\subset [k],$ we denote \linebreak
    $V^I=\prod_{i\in I} V_i.$ For $ x\in V^{[k]}, $ we denote by $ x_I $ the projection of $ x $ to $ V^I. $
	
	In order to prove our main theorem for multi-linear functions, we will need to work with a slightly larger class of functions which we call full multi-affine maps. Recall that a multi-affine map $ P:V^I\to\mF $ has a unique presentation $ P(x_I) = \sum_{J\subset I} P_J(x_J) $ where $ P_J:V^J\to\mF $ is multi-linear for all $ J\subset I. $ By abuse of notation, we will also denote by $ P $ the function $ V^{[k]}\to\mF $ given by $ x\mapsto P(x_I). $
	
	\begin{definition}
		$P$ is called \textbf{full} if $P_I \neq 0.$ We denote by $\tilde P = P_I$ the multi-linear part of $P.$
	\end{definition}

	In \cite{neslund}  a notion of rank for multi-linear functions is defined, called {\em partition rank}.
	\begin{definition}
		Let $ P:V^I\to\mF $ be a multi-linear function. The \textbf{partition rank} of $ P $ is the minimal length of a presentation $ P(x) = \sum_{i=1}^r Q_i(x_{I_i})R_i(x_{I\setminus I_i}), $ where $ \emptyset \neq I_i \subsetneq I $  and $ Q_i:V^{I_i}\to\mF,\ R_i:V^{I\setminus I_i}\to\mF $ are multi-linear for all $ i\in[r]. $ We write $ r = prk(P). $ We extend this definition to full multi-affine maps via $ prk(Q) = prk(\tilde Q). $
	\end{definition} 

	The bias of $ P $ on $ V^{[k]} $ is defined as before, but it has some interesting properties when $ P:V^I\to\mF $ is multi-linear. Writing $ P(x) = A(x_{I\setminus \{i_0\}})\cdot x_{i_0} $ where $ A:V^{I\setminus \{i_0\}}\to V_{i_0} $ is multi-linear for some $ i_0\in[k], $ Fourier analysis yields 
	\[ 
	bias(P) = |\mE_{x\in V^{[k]}} \chi (P(x))| = \mP_{x\in V^{I\setminus \{i_0\}}} \left( A(x) = 0 \right),  
	 \]
	where $ \mP_{x\in E} (T):= \frac{|T|}{|E|} $ for a subset $ T\subset E. $
	In particular, the character sum above is always positive and does not depend on our choice of character $ \chi. $ it is not too difficult to show that multi-linear maps with low partition rank exhibit significant bias, see \cite{kz-approx, lovett-rank}.
	
	\begin{claim}[Low partition rank implies bias] \label{rank-bias}
		If $ P:V^I\to\mF $ is multi-linear with $ prk(P) =r $ then 
		\[ 
		\mE_{x\in V^{[k]}}\chi(P(x)) \ge q^{-r} .
		 \] 
	\end{claim}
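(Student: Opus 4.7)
The plan is to proceed by induction on the partition rank $r$. The base case $r=0$ forces $P\equiv 0$, so $\chi(P(x))\equiv 1$ and $\mE_{x}\chi(P(x))=1\ge q^{0}$.

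For the inductive step, fix a partition-rank decomposition $P=\sum_{i=1}^{r}Q_{i}R_{i}$ and set $P'=\sum_{i=2}^{r}Q_{i}R_{i}$, so that $prk(P')\le r-1$. I split the variables as $x=(y,z)$ with $y=x_{I_{1}}$ and $z=x_{I\setminus I_{1}}$; then
\[
\mE_{x}\chi(P(x)) \;=\; \mE_{y}\Bigl[\mE_{z}\chi\bigl(Q_{1}(y)R_{1}(z)+P'(y,z)\bigr)\Bigr].
\]
For each fixed $y$, the integrand inside the inner expectation is a multi-linear function of $z\in V^{I\setminus I_{1}}$ (the set $I\setminus I_{1}$ is non-empty since $I_{1}\subsetneq I$), so its inner bias is non-negative. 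Restricting the outer expectation to the event $\{y:Q_{1}(y)=0\}$ therefore only discards non-negative contributions, yielding
\[
\mE_{x}\chi(P(x)) \;\ge\; \mP_{y}(Q_{1}(y)=0)\cdot \mE_{y\,:\,Q_{1}(y)=0}\bigl[\mE_{z}\chi(P'(y,z))\bigr].
\]

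A non-zero multi-linear function vanishes with probability at least $q^{-1}$ (otherwise $Q_{1}\equiv 0$ and the decomposition could be shortened), so the first factor is $\ge q^{-1}$. For the second factor, $P'(y,\cdot)$ is a multi-linear function of $z$ on $V^{I\setminus I_{1}}$ that inherits from $P'=\sum_{i\ge 2}Q_{i}R_{i}$ a decomposition of length $\le r-1$, and the induction hypothesis applied to it gives bias $\ge q^{-(r-1)}$. Multiplying yields $bias(P)\ge q^{-r}$, as required.

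The main technical obstacle is to justify that $P'(y,\cdot)$ genuinely admits a length-$(r-1)$ partition-rank decomposition in $z$. Each summand $Q_{i}R_{i}|_{y}$ factors over the induced partition $(I_{i}\setminus I_{1},(I\setminus I_{1})\setminus I_{i})$ of $I\setminus I_{1}$, which is strict precisely when $I_{i}\not\subseteq I_{1}$ and $I_{i}\not\supseteq I\setminus I_{1}$. In the degenerate cases one factor collapses to a constant in $y$ and the corresponding term reduces to an unfactored multi-linear function of $z$ on $V^{I\setminus I_{1}}$, which does not fit the strict partition-rank framework. The cleanest resolution is to strengthen the induction hypothesis to permit such degenerate terms without inflating the count $r$, verifying that the $q^{-r}$ bound still holds in the generalized setting; this is the technical core of the argument and is carried out in the cited references.
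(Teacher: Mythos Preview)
Your induction on $r$ does not close, and the proposed fix is incorrect. When you specialise $P'=\sum_{i\ge 2}Q_iR_i$ at $y=x_{I_1}$, a term with $I_i\subseteq I_1$ becomes a constant times a multi-linear map on \emph{all} of $V^{I\setminus I_1}$; such a ``degenerate term'' need not have partition rank $\le 1$. Your suggested strengthening---count each degenerate term as one and prove $\text{bias}\ge q^{-r}$ for such generalised length-$r$ decompositions---is simply false: a single bilinear form $z_1^{\!\top}Az_2$ with $\operatorname{rank}(A)=n$ is one degenerate term but has bias $q^{-n}$. So the ``technical core'' you defer to the references cannot be carried out as stated.

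The standard argument (reflected in the paper's proof of the relative version, Lemma~\ref{Rank-Bias}) inducts on $|I|$, not on $r$. Fix a single coordinate, say $1\in I$, and rewrite the decomposition as
\[
P(x)=\sum_{i=1}^{m}\ell_i(x_1)\,g_i(x_{I\setminus\{1\}})+\sum_{j=m+1}^{r}Q_j(x_{I_j})R_j(x_{I\setminus I_j}),
\]
where for $j>m$ the set $I_j$ is neither $\{1\}$ nor $I\setminus\{1\}$. On the subspace $W=\{y:\ell_1(y)=\cdots=\ell_m(y)=0\}$ (density $\ge q^{-m}$) the first block vanishes, and each remaining term restricts to a genuine partition-rank-$1$ term on $V^{I\setminus\{1\}}$; hence $prk(P(y,\cdot))\le r-m$ and the inductive hypothesis on $|I|-1$ gives bias $\ge q^{-(r-m)}$. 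Combined with non-negativity off $W$, this yields $\text{bias}(P)\ge q^{-m}\cdot q^{-(r-m)}=q^{-r}$. The point is to kill \emph{all} problematic terms at once rather than peel them off one by one.
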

	
	In \cite{Mi} the following (much more difficult) inequality was proved in the other direction:
	
	\begin{theorem}[Bias implies low partition rank] 
		Let $ \mF $ be a finite field and $0\le d < char(\mF).$ Suppose $ P:V^I\to\mF$ is multi-linear with $ |I|\le d $ and $ bias(P) \ge q^{-s}.$  Then 
		\[
		prk(P) \le A(1+s)^B,
		\] 
		where $ A=A(d),B=B(d) $ are constants.
	\end{theorem}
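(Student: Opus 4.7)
The plan is to prove this by induction on the arity $d = |I|$, following the Fourier-analytic strategy of \cite{gt,Mi}. The base case $d=1$ is trivial: a linear map with bias $\geq q^{-s}$ is identically zero, so $prk = 0$. For the inductive step, fix $i_0 \in I$, set $I' = I \setminus \{i_0\}$, and write $P(x) = A(x_{I'}) \cdot x_{i_0}$ with $A : V^{I'} \to V_{i_0}$ multilinear. Combining the formula $bias(P) = \mP_{x_{I'}}(A(x_{I'}) = 0)$ recalled in the excerpt with Fourier inversion on $V_{i_0}$ gives
\[
bias(P) \;=\; \mE_{y \in V_{i_0}} \mE_{x_{I'}} \chi\bigl(A(x_{I'}) \cdot y\bigr) \;=\; \mE_{y \in V_{i_0}} bias(Q_y),
\]
where $Q_y(x_{I'}) := A(x_{I'}) \cdot y$ is multilinear of arity $d-1$. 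Since all of these biases are nonnegative, the hypothesis forces a density $\gtrsim q^{-s}$ of $y \in V_{i_0}$ for which $bias(Q_y) \geq q^{-s-O(1)}$.

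By the inductive hypothesis, each such good $y$ satisfies $prk(Q_y) \leq A'(1+s)^{B'}$ with $A' = A'(d-1)$, $B' = B'(d-1)$. The heart of the matter is to convert this pointwise (per-slice) partition-rank information into a \emph{single} partition-rank decomposition of the ambient multilinear map $A$. Naively summing per-slice decompositions over a basis of $V_{i_0}$ costs a factor of $\dim V_{i_0}$, which is fatal since the bounds must be independent of $\dim V_i$. The remedy is a linear-algebraic bootstrap: one shows that the set of good $y$'s has enough additive/linear structure (after controlled pruning) to extract a single partition-rank-$1$ summand from $A$ itself, then iterates on the residual multilinear map whose bias has degraded only by a controlled amount.

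The step I expect to be the main obstacle is precisely this bootstrap, and in particular controlling the arithmetic of bias losses across the iteration. Each round peels off one low-rank piece and feeds the remainder back through the inductive hypothesis; achieving a polynomial (rather than tower-type) bound requires showing that the bias decreases by at most a factor of $q^{-\mathrm{poly}(d)}$ per round, so that at most $(1+s)^{O(1)}$ rounds occur before the bias becomes trivial. Everything else --- the base case, the Fourier reduction, and the bookkeeping when adding the outer factor $x_{i_0}$ to the resulting partition-rank decomposition of $A$ --- is essentially routine once this controlled descent is in hand.
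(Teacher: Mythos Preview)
The paper does not give a standalone proof of this statement: it is quoted as a theorem of Mili\'cevi\'c \cite{Mi}. The paper's contribution is the relative version (Theorem \ref{Bias-Rank}), which specializes to the stated result when $\mcQ=\emptyset$. So the relevant comparison is between your sketch and the $\mcQ=\emptyset$ case of the paper's proof of Theorem \ref{Bias-Rank}.

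Your opening moves (induction on $|I|$, writing $P(x)=A(x_{I'})\cdot x_{i_0}$, the identity $bias(P)=\mE_y\,bias(Q_y)$, and obtaining a dense set of good $y$'s with $prk(Q_y)$ small) are correct and match the start of Section \ref{approximations}. The gap is in what you call the ``bootstrap.'' You propose to peel off one rank-$1$ summand from $A$, argue that the residual still has bias $\ge q^{-s-\mathrm{poly}(d)}$, and iterate. No mechanism is given for either step, and in fact neither is known to work: the low-rank decompositions of the slices $Q_y$ for different good $y$ need not share any common factor, and after subtracting an arbitrary rank-$1$ piece the bias of the remainder is uncontrolled. This peeling strategy is essentially the Green--Tao iteration, which gives tower-type rather than polynomial bounds; getting polynomial bounds is exactly what makes Mili\'cevi\'c's theorem hard.

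The paper's argument (and Mili\'cevi\'c's) proceeds quite differently after the common opening. Rather than iterating, one samples $t_1,\ldots,t_{100s}\in V_{i_0}$ and sets $\phi_t(x_{I'})=(A(x_{I'})\cdot t_j)_j$; Proposition \ref{approx} shows that for a $q^{-O(s)}$-dense set of such $t$, $P$ vanishes $q^{-s}$-a.e.\ on $\{\phi_t=0\}$. One then upgrades ``a.e.\ vanishing'' to identical vanishing via a parallelepiped/polynomial-identity argument (Proposition \ref{99-to-100}), glues $2^d$ such pieces together using multilinearity (Claim \ref{gluing} and Corollary \ref{gluing-many}) to get $P$ vanishing on a single variety cut out by $O(s)^{O(1)}$ equations, and finally applies a Nullstellensatz (Theorem \ref{Nullstellensatz}) to read off a partition-rank decomposition. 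There is no per-round descent; all $\sim 100s$ pieces are produced at once, and the polynomial dependence on $s$ comes from the regularization cost (Claim \ref{Regularization}). Your proposal is missing this entire structure.
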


	This result immediately extends to full multi-affine maps because a short calculation in \cite{Lov} (which we will follow in Lemma \ref{affine-to-linear}) shows that
	\[ 
	bias(P) \le bias(\tilde P),
	 \] 
	and by definition they have the same partition rank. We now make the corresponding relative definitions for full multi-affine maps.
	
	\begin{definition}
		\begin{enumerate}
			\item 	Let $ \mcQ=(Q_i:V^{I_i}\to\mF)_{i\in[m]} $ and $ P:V^I\to\mF $ be multi-linear maps. The \textbf{partition rank of $ P $ relative to $ \mcQ $} is 
			\[ 
			prk_\mcQ (P) = \min \left\lbrace prk\left( P-\sum_{i\in[m],I_i\subset I} Q_i(x_{I_i})R_i(x_{I\setminus I_i}) \right) :\ R_i\ \text{are multi-linear}\right\rbrace. 
			\]
			If $ P,\mcQ $ are full multi-affine, we define $ prk_\mcQ (P) = prk_{\tilde\mcQ} (\tilde P). $
			
			\item The partition rank of a collection of multi-linear maps \linebreak
            $ \mcP = (P_i:V^I\to\mF)_{i\in[c]} $ relative to $ \mcQ $ is 
			\[ 
			prk_\mcQ (\mcP) = \min \left\lbrace prk_\mcQ \left( a\cdot\mcP \right):\ 0\neq\textbf{a}\in\mF^c  \right\rbrace. 
			 \]  
			 If $ \mcP,\mcQ $ are full multi-affine with all maps in $ \mcP $ defined on a fixed index set $ I, $ set $ prk_\mcQ (\mcP) = prk_{\tilde \mcQ} (\tilde \mcP). $
			
			\item A tower  
			\[
			\mcQ = (\mcQ_i)_{i\in [h]}
			\]
			is composed of $h$ collections of full multi-affine functions \linebreak
            $ \mcQ_i = (Q_{i,j}:V^{I_i}\to\mF)_{j\in[m_i]}, $ which we call \textbf{layers}. The \textbf{associated multi-linear tower} is $\tilde \mcQ = (\tilde \mcQ_i)_{i\in[h]}.$ We say the tower is multi-linear if $\tilde\mcQ = \mcQ.$ 
		The \textbf{degree} of the tower is $\max_i |I_i|,$ and the \textbf{height} of the tower is
			$h.$ The \textbf{dimension} of the tower is  $m = m_1+\ldots+m_h.$ For $i\in[h],$ we denote the truncated tower by $ \mcQ_{<i} = (\mcQ_j)_{j\in [i-1]}. $
			
			\item Let $A,B,s>0.$ We say that $\mcQ$ is $(A,B,s)$-regular if for each $i\in [h]$ we have
			\[
			prk_{\mcQ_{<i}}(\mcQ_i) > A\left(m_i+m_{i+1}+\ldots+m_h+s\right)^B.
			\]
		\end{enumerate}
	\end{definition}

	Our main theorem for full multi-affine maps is
	
	\begin{theorem}[Relative bias implies low relative rank (d,H)]\label{Bias-Rank}
		There exist constants $A(d,H),B(d,H)$ such that if $ P:V^I\to\mF$  is a full multi-affine map with $|I|\le d$, $\mcQ$ is a tower on $V^{[k]}$ of degree $\le d$ and height $\le H$  such that:
		\begin{enumerate}
			\item $bias_{Z(\mcQ)} (P) \ge q^{-s}$ and
			\item $\mcQ$ is $(A,B,s)$-regular,
		\end{enumerate}
		then $rk_\mcQ (P) \le A(1+s)^B.$
	\end{theorem}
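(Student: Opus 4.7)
The plan is to induct on the height $H$ of the tower $\mcQ$. The base case $H = 0$ is the empty tower, where $Z(\mcQ) = V^{[k]}$ and $rk_{\mcQ}(P) = prk(P)$, so the conclusion reduces to Milicevic's bias-implies-partition-rank theorem, combined with the inequality $bias(P) \le bias(\tilde{P})$ of Lemma~\ref{affine-to-linear}, which extends the result from multi-linear to full multi-affine $P$.

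For the inductive step I would peel off the top layer $\mcQ_H$. A bootstrap argument using the $(A,B,s)$-regularity of $\mcQ_H$ relative to $\mcQ_{<H}$, Claim~\ref{rank-bias}, and the inductive hypothesis first establishes the uniformity estimate $|Z(\mcQ)|/|Z(\mcQ_{<H})| = q^{-m_H}(1 + o(1))$. Fourier expansion of $1_{\mcQ_H(x) = 0}$ on $Z(\mcQ_{<H})$ then yields
\[
\frac{|Z(\mcQ)|}{|Z(\mcQ_{<H})|}\, \mE_{x \in Z(\mcQ)} \chi(P(x)) = \frac{1}{q^{m_H}} \sum_{\alpha \in \mF^{m_H}} f(\alpha), \quad f(\alpha) := \mE_{x \in Z(\mcQ_{<H})} \chi(P(x) + \alpha \cdot \mcQ_H(x)).
\]
From the assumed lower bound $bias_{Z(\mcQ)}(P) \ge q^{-s}$ one extracts an $\alpha$ for which $|f(\alpha)|$ is bounded below, and invoking the inductive hypothesis on the truncated tower $\mcQ_{<H}$ (which inherits the required regularity from the original assumption) yields multi-linear coefficients $R_{i,j}$ for $i < H$ such that $P + \alpha \cdot \mcQ_H - \sum_{i < H,\, j} R_{i,j} Q_{i,j}$ has controlled partition rank.

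The principal obstacle is quantitative: a naive Cauchy--Schwarz extraction only yields $|f(\alpha)| \ge q^{-s - m_H}$, producing a bound of the form $A(d, H-1)(1 + s + m_H)^B$ which depends on $m_H$, whereas the target bound $A(d, H)(1+s)^B$ must be $m_H$-free. Overcoming this requires exploiting the rank of $\mcQ_H$ past mere uniformity: the hypothesis $prk_{\mcQ_{<H}}(\mcQ_H) > A(m_H + s)^B$, combined with the contrapositive of the inductive bias-rank relation applied to differences $(\alpha_1 - \alpha_2) \cdot \mcQ_H$, forces that at most one $\alpha$ can produce $|f(\alpha)|$ above a threshold polynomial in $m_H + s$; isolating this distinguished $\alpha$ should improve the extraction to $|f(\alpha)| \ge q^{-s - O(1)}$, closing the induction. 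A secondary obstacle is that the extracted coefficients $\alpha_j$ are constants, whereas the definition of $rk_\mcQ$ requires each coefficient of $Q_{H,j}$ to be multi-linear on $V^{I \setminus I_H}$; constants are admissible only when $I_H = I$. For $I_H \subsetneq I$ I would introduce an auxiliary induction on $|I \setminus I_H|$: pick $i_0 \in I \setminus I_H$, write $P(x) = A(x_{I \setminus \{i_0\}}) \cdot x_{i_0}$, and apply a Cauchy--Schwarz in the $x_{i_0}$-direction to reduce the bias question to one for a derivative-type polynomial on a modified (still regular) tower over $V^{I \setminus \{i_0\}}$, producing a genuinely multi-linear coefficient rather than a constant.
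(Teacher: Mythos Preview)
Your layer-peeling induction on $H$ captures exactly what the paper does in Section~\ref{reductions} to handle layers of maximal degree, and your ``at most one $\alpha$'' argument via sub-additivity of relative rank is the right mechanism there. The gap is in the case $|I_H|<|I|$, and it is more serious than the ``secondary obstacle'' you flag. When $\deg(\mcQ_H)<\deg(P)$, adding $\alpha\cdot\mcQ_H$ to $P$ does not change the top multi-linear part, so $prk_{\mcQ_{<H}}(P+\alpha\cdot\mcQ_H)=prk_{\mcQ_{<H}}(\tilde P)$ is \emph{independent of $\alpha$}. The sub-additivity step that you (and the paper, in Section~\ref{reductions}) use to force all but one $\alpha$ to have high rank simply collapses: you cannot relate $prk_{\mcQ_{<H}}(P+\alpha_i\cdot\mcQ_H)$ to $prk_{\mcQ_{<H}}((\alpha_1-\alpha_2)\cdot\mcQ_H)$ because the two objects live in different degrees. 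So the $m_H$-free extraction fails precisely here, and you are left with no mechanism to peel a lower-degree layer without paying $m_H$.

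Your proposed Cauchy--Schwarz fix points in a reasonable direction but is far from a proof. Reducing $|I|$ by one via a derivative does not reduce the height of the tower, so the induction on $H$ does not advance; and recovering a bound on $prk_\mcQ(P)$ from a bound on a derivative of $P$ is itself a nontrivial reconstruction problem. In the paper this case---a tower of degree strictly below $\deg P$---is exactly the $l=0$ base of the auxiliary induction in Theorem~\ref{Bias-Rank-par}, and it absorbs the bulk of the work: Proposition~\ref{approx} produces many approximate vanishing loci via random restriction, Claim~\ref{Regularization} regularizes them, Proposition~\ref{99-to-100} and Corollary~\ref{gluing-many} glue them into a single small regular tower $\mcR$ with $P\restriction_{Z(\mcQ\cup\mcR)}\equiv 0$, and finally Theorem~\ref{Nullstellensatz} converts identical vanishing into $rk_{\mcQ\cup\mcR}(P)=0$. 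None of this is visible from a single Cauchy--Schwarz step, and your sketch does not supply a substitute.
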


	We will also prove a relative version of Claim \ref{rank-bias}.  
	
	\begin{claim}[Low relative rank implies relative bias]
		There exist constants $A(d,H), B(d,H)$ such that if $ \mcQ $ is an $(A,B,s)$-regular multi-linear tower on $V^{[k]}$ of degree $\le d$ and height $\le H$ and $P:V^I\to\mF$ with $|I| \le d$ is multi-linear with $rk_\mcQ (P) \le r,$ then
		\begin{enumerate}
			\item $Re[\mE_{x\in Z(\mcQ)} \chi(P(x))] \ge q^{-r}-q^{-s}$ and
			\item $|Im[\mE_{x\in Z(\mcQ)} \chi(P(x))]| \le q^{-s}.$
		\end{enumerate}
	\end{claim}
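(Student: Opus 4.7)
The plan is to use the definition of relative rank to replace $P$ by an auxiliary multi-linear map $P'$ of low absolute partition rank that agrees with $P$ on $Z(\mcQ),$ then Fourier-expand $\mathbf{1}_{Z(\mcQ)}$ and isolate the contribution of the trivial frequency using Claim \ref{rank-bias}, while controlling the remaining terms via the $(A,B,s)$-regularity of the tower and Milicevic's theorem (bias implies low partition rank) in its contrapositive form.

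Concretely, since $rk_\mcQ(P)\le r$ there exist multi-linear maps $(R_{i,j})_{I_i\subset I}$ so that $P'(x):=P(x)-\sum_{i,j:I_i\subset I} Q_{i,j}(x_{I_i})R_{i,j}(x_{I\setminus I_i})$ satisfies $prk(P')\le r,$ and $P\equiv P'$ on $Z(\mcQ)$. Writing $\mathbf{1}_{Z(\mcQ)}(x)=\mE_{\mathbf{c}\in\mF^m}\chi(\mathbf{c}\cdot\mcQ(x)),$ where $m$ is the dimension of $\mcQ,$ one gets
\[
\mE_{x\in Z(\mcQ)}\chi(P(x))\;=\;\frac{1}{\mP_x[Z(\mcQ)]}\;\mE_{\mathbf{c}}\,\mE_x\,\chi\bigl(P'(x)+\mathbf{c}\cdot\mcQ(x)\bigr).
\]
The $\mathbf{c}=0$ term is $\mE_x\chi(P'(x)),$ which by Claim \ref{rank-bias} applied to the multi-linear $P'$ of partition rank $\le r$ is a \emph{non-negative real} number at least $q^{-r}$. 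For each $\mathbf{c}\neq 0,$ let $i_0$ be the top layer index with $c_{i_0,j}\neq 0$ for some $j;$ by the regularity hypothesis, $\sum_j c_{i_0,j}Q_{i_0,j}$ has partition rank exceeding $A(m_{i_0}+\ldots+m_h+s)^B$ even modulo $I(\mcQ_{<i_0}),$ and hence absolutely. Subadditivity $prk(X+Y)\ge prk(X)-prk(Y)$ absorbs the contributions from $P'$ (of rank $\le r$) and from lower layers (which live in $I(\mcQ_{<i_0})$ and can only be matched by legitimate reductions), leaving $P'+\mathbf{c}\cdot\mcQ$ with partition rank at least $A(m_{i_0}+\ldots+s)^B-r.$ Choosing $A(d,H),\,B(d,H)$ large enough (relative to the Milicevic/Bhowmick--Lovett constants) so that this quantity exceeds the threshold forcing bias below $q^{-s},$ we conclude $|\mE_x\chi(P'+\mathbf{c}\cdot\mcQ)|\le q^{-s}$ for all $\mathbf{c}\neq 0.$ Running the same argument with $P'=0$ yields $\mP_x[Z(\mcQ)]=q^{-m}(1+O(q^{-s})).$

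Combining, the main term contributes $(1+O(q^{-s}))\,\mE_x\chi(P'),$ whose real part is $\ge q^{-r}-O(q^{-s})$ and whose imaginary part is $O(q^{-s});$ the $\mathbf{c}\neq 0$ contribution has modulus $O(q^{-s}).$ Absorbing the implicit constants into a final enlargement of $A(d,H),B(d,H)$ gives the two bounds (1) and (2) as stated. The main technical obstacle is controlling $prk(P'+\mathbf{c}\cdot\mcQ)$ when the index sets $I,\,I_1,\ldots,I_h$ are not nested, so that the sum is multi-affine rather than multi-linear and one must isolate the correct ``top'' multi-linear slice to apply Milicevic; this is precisely where the layer-by-layer regularity parameters $A(m_i+\ldots+m_h+s)^B$ must be picked to simultaneously dominate $r$ and the Milicevic threshold $A_{\mathrm{Mil}}(1+s)^{B_{\mathrm{Mil}}}$ for \emph{every} choice of top layer $i_0$.
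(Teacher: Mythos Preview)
Your approach via Fourier expansion of $\mathbf{1}_{Z(\mcQ)}$ is natural but has a genuine gap in the treatment of the $\mathbf{c}\neq 0$ terms. The subadditivity inequality $prk(X+Y)\ge prk(X)-prk(Y)$ only makes sense when $X,Y$ are multi-linear on the \emph{same} index set; here $P'$ lives on $V^I$ while $c_{i_0}\cdot\mcQ_{i_0}$ lives on $V^{I_{i_0}}$, and these need not coincide. You acknowledge this (``the sum is multi-affine rather than multi-linear and one must isolate the correct `top' multi-linear slice''), but you do not actually resolve it. Passing to the top multi-linear slice does not help: when $|I|>|I_i|$ for every layer $i$ (for instance $|I|=d$ while the tower has degree $<d$, which is exactly the situation arising in the inductive step of the paper), the unique maximal index set among the nonzero contributions is $I$ itself, so the top slice of $P'+\mathbf{c}\cdot\mcQ$ on $V^I$ is precisely $P'$. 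Its partition rank is $\le r$, and the Lovett/Milicevic machinery then only yields $|\mE_x\chi(P'+\mathbf{c}\cdot\mcQ)|\le bias(P')$, which can be close to $1$. Since the bound $|Im|\le q^{-s}$ must hold uniformly in $r$ (and even part (1) requires $Re\ge -q^{-s}$ when $r\ge s$), no enlargement of $A,B$ can absorb the ``$-r$'' loss.

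The paper proceeds quite differently. It proves this claim (as Lemma~\ref{Rank-Bias}) \emph{jointly} with the Derivatives and Fubini lemmas by simultaneous induction on the height of the tower, and the entire package assumes the relative bias-to-rank theorem (Theorem~\ref{Bias-Rank-par}) at the given parameters --- not merely the absolute Milicevic bound. Concretely, the proof first uses Fubini to reduce to $I=[k]$, then inducts on $|I|$: for the imaginary part it conditions on $y=x_1\in X_1$, uses Derivatives to ensure $\mcQ(y)$ is regular for $q^{-10s}$-a.e.\ $y$, and applies the inductive bound on $|Im|$ for $P(y,\cdot)$ on $X(y)$ --- a bound which is independent of $r$. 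For the real part it splits a low-rank presentation of $P$ modulo $\mcQ$ into terms of the form $l_i(x_1)g_i(x_{[2,k]})$ and terms not of that form, conditions on the event $\{l_i(y)=0\ \forall i\}$ (which has density $\ge q^{-m}$), and again applies the inductive hypothesis to $P(y,\cdot)$, now of relative rank $\le r-m$. The paper therefore never needs to bound the absolute bias of a mixed-index-set sum like $P'+\mathbf{c}\cdot\mcQ$; it remains entirely in the relative setting throughout.
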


	Let us now give a high-level sketch of the proof of Theorem \ref{Bias-Rank}. This will proceed by induction on $d,$ where the base case $ d = 1 $ follows from basic Fourier analysis. Assuming the theorem holds at degree $ d, $ we present some useful consequences in section \ref{consequences} and make some reductions in section \ref{reductions}. After these reductions, the task at hand is to prove the theorem for multi-linear $ P:V^{[d+1]}\to\mF $ and a multi-linear tower $ \mcQ $ on $ V^{[d+1]} $ of degree $ \le d. $ In section \ref{approximations}, we show that there are many "derived" towers $ \mcQ_t $ and multi-linear functions $ \phi_t:V^{[d]}\to\mF^{100s} $ such that $ P $ vanishes $ q^{-s} $-a.e. on the variety $ Z(\mcQ_t)\cap \{\phi_t(x_{[d]}) = 0\}. $ This is similar in spirit to previous work on this subject, but the derived towers appearing here present a new challenge. In section \ref{regularization-sec} we pause to describe the relative regularization process which is the multi-linear analogue of Theorem \ref{polyreg}. In section \ref{fixing}, the regularization process is applied to the collections $ \mcQ_t,\phi_t $ to obtain regular varieties on which $ P $ vanishes $ q^{-s} $-a.e. In that section we also show that this means that $ P $ vanishes identically on them, similarly to proposition 5.1 in \cite{gt}. A key novel step in our proof consists in then showing that these varieties can be "summed" to obtain a single tower $ \mcR $ of dimension $ \le A(1+s)^B $ such that $ P\restriction_{Z(\mcQ\cup\mcR)} \equiv 0$ and $ \mcQ\cup \mcR $ is regular. In section \ref{nullstellensatz-sec}, we prove a nullstellensatz which allows us to conclude that $ rk_{\mcQ\cup\mcR} (P) = 0, $ so $ rk_\mcQ (P) \le \dim(\mcR) \le A(1+s)^B,$ which completes the inductive step.  
	
	\section{Some equidistribution lemmas}\label{consequences}
	
	As mentioned before, the proof of Theorem \ref{Bias-Rank} will proceed by induction on $ d $ and the base case $ d=1 $ follows from Fourier analysis. For the purposes of the inductive argument it will be useful to formulate a version of Theorem \ref{Bias-Rank} with an auxiliary parameter, which we now do.
	
	\begin{theorem}[Relative bias implies low relative rank (d,H,l)]\label{Bias-Rank-par}
	There exist constants $A(d,H,l),B(d,H,l)$ such that if $ P:V^I\to\mF$ is a full multi-affine map with $|I|\le d$, $\mcQ$ is a tower on $V^{[k]}$ of degree $\le d$ and height $\le H$ such that:
	\begin{enumerate}
		\item $bias_{Z(\mcQ)} (P) \ge q^{-s},$
		\item $\mcQ$ is $(A,B,s)$-regular and
		\item $ \mcQ_{\le H-l} $ is of degree $ <d,$
	\end{enumerate}
	then $rk_\mcQ (P) \le A(1+s)^B.$
	\end{theorem}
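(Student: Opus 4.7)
My plan is to prove Theorem \ref{Bias-Rank-par} by a double induction: the outer induction is on the degree $d$, with base case $d=1$ handled by elementary Fourier analysis. For fixed $d$, I would carry out an inner induction on the auxiliary parameter $l$, which counts how many top layers of $\mcQ$ are allowed to attain the maximal degree $d$. The base case $l=0$ says the entire tower has degree $<d$, so the outer inductive hypothesis is applicable to the restrictions of $P$ to slices $V^{I\setminus\{i\}}$; combining these restricted bounds via an averaging/slicing argument together with the reductions of Section \ref{reductions} should produce the required bound on $rk_\mcQ(P)$.

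For the inductive step from $l-1$ to $l$ at fixed $d$, the plan is to single out the topmost layer $\mcQ_H$ of degree $d$ and exploit the bias hypothesis $bias_{Z(\mcQ)}(P)\ge q^{-s}$ to produce a family of ``derived'' towers $\mcQ_t$ and auxiliary multi-linear maps $\phi_t:V^{[d-1]}\to\mF^{O(s)}$ on which $P$ vanishes $q^{-s}$-almost everywhere on $Z(\mcQ_t)\cap\{\phi_t=0\}$, along the lines alluded to in Section \ref{approximations}. The key observation is that replacing $\mcQ_H$ by the $\phi_t$'s drops the top-degree layer count by one, so after running the multi-linear polynomial regularization procedure of Section \ref{regularization-sec}, the inner inductive hypothesis at parameter $l-1$ becomes applicable. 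The various constraints produced for different $t$ are then amalgamated into a single augmented regular tower $\mcQ\cup\mcR$ of dimension $\le A(1+s)^B$ on which $P$ vanishes $q^{-s}$-a.e., and the robust Nullstellensatz of Section \ref{nullstellensatz-sec} promotes this approximate vanishing to exact ideal membership, giving $rk_{\mcQ\cup\mcR}(P)=0$ and hence $rk_\mcQ(P)\le\dim(\mcR)\le A(1+s)^B$.

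The main obstacle I foresee is the amalgamation step: the derived towers $\mcQ_t$ and the functions $\phi_t$ for different values of $t$ are a priori unrelated, and fitting them into a single $(A,B,s)$-regular tower without losing polynomial control of the dimension in $s$ requires careful bookkeeping of the constants $A(d,H,l), B(d,H,l)$ at each step; this is where the flexibility of the $l$ parameter is essential, since each inductive step strips one layer of top degree. A secondary difficulty is ensuring that the derived towers inherit enough regularity from $\mcQ$ so that the inner inductive hypothesis genuinely applies after regularization; this in turn relies on the fact that the $(A,B,s)$-regularity of $\mcQ$ is preserved under generic slicing and differentiation, a preservation property that will need to be established as an auxiliary lemma before the main derived-tower construction can be carried out.
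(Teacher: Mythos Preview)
Your double induction (outer on $d$, inner on $l$) matches the paper's structure, but you have the placement of the hard work exactly inverted.

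In the paper, the inductive step $l\to l+1$ is \emph{easy} and does not use derived towers, approximations, regularization, or the Nullstellensatz at all. It is a pure Fourier layer-peeling argument (the first lemma of Section~\ref{reductions}): pick a degree-$d$ layer $\mcQ_t$, use orthogonality to write
\[
|\mE_{x\in Z(\mcQ)}\chi(P(x))|\ \le\ \frac{|Z(\mcQ_{\neq t})|}{|Z(\mcQ)|}\,q^{-m_t}\sum_{a\in\mF^{m_t}}|\mE_{x\in Z(\mcQ_{\neq t})}\chi(P(x)+a\cdot\mcQ_t(x))|,
\]
observe by sub-additivity of relative rank that at most one $a=b$ can have $prk_{\mcQ_{\neq t}}(P+a\cdot\mcQ_t)$ below $\tfrac{A}{2}(m_t+s)^B$, apply the inner inductive hypothesis at parameter $l$ to kill all other terms, and conclude that $P+b\cdot\mcQ_t$ has large bias on $Z(\mcQ_{\neq t})$, which is of type $(d,H,l)$.

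Conversely, the base case $l=0$ is the \emph{hard} part, and your proposed argument for it does not close. When $l=0$ the tower has degree $<d$ but $P$ still has degree $\le d$, so the outer hypothesis at degree $d-1$ only applies to the slices $P(y,\cdot)$; it tells you that $rk_{\mcQ(y)}(P(y,\cdot))$ is small for typical $y$, and there is no direct way to assemble slice-wise rank bounds into a bound on $rk_\mcQ(P)$. That assembly is precisely what the machinery of Sections~\ref{approximations}--\ref{nullstellensatz-sec} (approximations, regularization, gluing via Corollary~\ref{gluing-many}, Nullstellensatz) is built to accomplish, and it all belongs in the $l=0$ case, not in the inductive step on $l$.

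A secondary issue: your description of the inductive step conflates two different constructions. The maps $\phi_t$ in Proposition~\ref{approx} are built from $P$ via the factorization $P(x)=A(x_{[d]})\cdot x_{d+1}$, not from the tower layer $\mcQ_H$; the sentence ``replacing $\mcQ_H$ by the $\phi_t$'s drops the top-degree layer count by one'' does not correspond to anything in the actual argument.
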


	Theorem \ref{Bias-Rank} corresponds of course to the case $ l=H. $ We call $ \mcQ $ as above a tower of type  (d,H,l) for short.
	In this section we assume Theorem \ref{Bias-Rank-par}(d,H,l) holds and explore some of its useful consequences. We warm up with the following simple but important result.
	
	\begin{lemma}[Regular varieties are of the expected size (d,H+1,l+1)]\label{Atom-size}
		There exist constants $A(d,H,l),B(d,H,l)$ such that for any $s>0$ and for any $(A,B,s)$-regular tower $\mcQ$ on $V^{[k]}$ of type (d,h,l+1)  and dimension $m,$ we have 
		\[
		\left|q^m \frac{|Z(\mcQ)|}{|V^{[k]}|}-1\right|\le q^{-s}.
		\]
	\end{lemma}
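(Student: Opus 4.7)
The approach is to run Fourier analysis one layer at a time and induct on the height $h$ of the tower $\mcQ$, invoking Theorem \ref{Bias-Rank-par}$(d,H,l)$ at each step. The base case $h=0$ is immediate since $Z(\mcQ) = V^{[k]}$. For the inductive step I would factor
\[
\frac{|Z(\mcQ)|}{|V^{[k]}|} \;=\; \frac{|Z(\mcQ_{<h})|}{|V^{[k]}|}\cdot \mP_{x\in Z(\mcQ_{<h})}\bigl[\mcQ_h(x)=0\bigr],
\]
and apply the inductive hypothesis to $\mcQ_{<h}$. Truncation only decreases the height and preserves the condition that $\mcQ_{\le H-l}$ is of degree $<d$, so $\mcQ_{<h}$ is again of type $(d,H+1,l+1)$ (and, since its height is now $\le H$, of type $(d,H,l)$ as well). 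Its $(A,B,s)$-regularity is immediate from that of $\mcQ$, since omitting later layers only shrinks the quantity $m_i+\cdots+m_h+s$ on the right of each regularity inequality.

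For the second factor, orthogonality of characters gives the Fourier expansion
\[
q^{m_h}\,\mP_{x\in Z(\mcQ_{<h})}\bigl[\mcQ_h(x)=0\bigr] \;-\; 1 \;=\; \sum_{0\neq \mathbf{a}\in\mF^{m_h}}\mE_{x\in Z(\mcQ_{<h})}\chi\bigl(\mathbf{a}\cdot\mcQ_h(x)\bigr),
\]
and the heart of the argument is to show that each of these at-most $q^{m_h}$ relative biases is sufficiently small. Because $\mcQ_{<h}$ is of type $(d,H,l)$ and $(A,B,s)$-regular, Theorem \ref{Bias-Rank-par}$(d,H,l)$ applies to $P = \mathbf{a}\cdot\mcQ_h$, which is multi-linear on $V^{I_h}$ with $|I_h|\le d$. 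Regularity of $\mcQ$ at layer $h$ yields $rk_{\mcQ_{<h}}(\mathbf{a}\cdot\mcQ_h) \ge rk_{\mcQ_{<h}}(\mcQ_h) > A(m_h+s)^B$; by choosing $A,B$ much larger than the constants $A'(d,H,l),B'(d,H,l)$ of the theorem, the contrapositive forces each character sum to be at most $q^{-C(m_h+s)}$ for any desired constant $C$. Summing over the $\le q^{m_h}$ nonzero $\mathbf{a}$ produces a total bounded by $q^{m_h - C(m_h+s)}$, which for $C\ge 2$ is far smaller than $q^{-s}$.

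Combining the two factors via $q^m |Z(\mcQ)|/|V^{[k]}| = (1+\varepsilon_1)(1+\varepsilon_2)$ gives $\bigl|q^m |Z(\mcQ)|/|V^{[k]}| - 1\bigr| \le |\varepsilon_1|+|\varepsilon_2|+|\varepsilon_1\varepsilon_2|$, closing the induction. The main obstacle is the careful calibration of the constants $A,B$: because the multiplicative error compounds over the $h \le H+1$ layers, I would really prove a slightly stronger quantitative bound at each stage (say with $q^{-s-H}$ in place of $q^{-s}$ on the right-hand side, or equivalently let $C$ depend on $H$) so that after $H+1$ compoundings the cumulative error remains below $q^{-s}$. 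A secondary technical point is verifying at each step that the hypotheses of Theorem \ref{Bias-Rank-par}$(d,H,l)$ hold for the truncated tower and for each character $\mathbf{a}\cdot\mcQ_h$; the type condition, the inherited regularity, and the degree bound $|I_h|\le d$ are all direct checks.
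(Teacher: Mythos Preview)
Your proposal is correct and follows essentially the same approach as the paper: Fourier-expand the indicator of each layer's zero set, use the contrapositive of Theorem~\ref{Bias-Rank-par}$(d,H,l)$ on the truncated tower $\mcQ_{<i}$ to bound each nontrivial character sum, and multiply the resulting $(1+e_i)$ factors. The only cosmetic difference is that the paper writes the telescoping product $\prod_{i}|Z(\mcQ_{\le i})|/|Z(\mcQ_{<i})|$ over all layers at once rather than inducting on the height, and handles the compounding of errors by arranging $|e_i|\le q^{-(s+H+1)}$ for every $i$---exactly the ``slightly stronger quantitative bound at each stage'' you anticipated.
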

	
	\begin{proof}
		Let $A',B'$ be the constants from Theorem \ref{Bias-Rank-par}(d,H,l). If $A, B$ are sufficiently large depending on $ A',B' $ then $\mcQ$ is $(A',B',s+H+2)$ regular. So for any $i\in[H+1]$ the tower $\mcQ_{<i}$ is $(A',B',s+H+m_i)$-regular of type (d,H,l)
		and
		\[
		rk_{\mcQ_{<i}}(\mcQ_i) > A'(s+H+m_i+2)^{B'}.
		\]
		By Theorem \ref{Bias-Rank-par} it follows that for any $0\neq a\in \mF^{m_i}$ we have
		\[
		bias_{Z(\mcQ_{<i})} (a\cdot \mcQ_i) < q^{-(s+H+m_i+1)}.
		\]
		Fourier analysis yields
		\[
		 q^{m_i}\frac{|Z(\mcQ_{\le i})|}{|Z(\mcQ_{<i})|} = \sum_{a\in \mF^{m_i}}\mE_{x\in Z(\mcQ_{<i})} \chi( a\cdot \mcQ_i(x)) = 1+e_i,
		 \]
		where $|e_i|\le q^{-(s+H+1)}.$
		Then we multiply to get
		\[
		\frac{|Z(\mcQ)|}{|V^{[k]}|} = \prod_{i\in[H+1]}\frac{|Z(\mcQ_{\le i})|}{|Z(\mcQ_{<i})|} = \prod_{i\in[H+1]}  (1+e_i) q^{-m_i} = (1+e)q^{-m},
		\]
		where $|e|\le q^{-s}.$
	\end{proof}
	
	We now introduce some notation. For a tower $\mcQ$ on $V^{[k]}$ and a subset $I\subset [k],$ we write $ \mcQ_I = (\mcQ_i)_{i\in[h],I_i\subset I} $ for the restriction to a tower on $ V^I. $ Similarly, we define $ Z(\mcQ)_I = Z(\mcQ_I) \subset V^I. $  For $y\in V^I$ and a full multi-affine function $ Q:V^J\to\mF, $ the full multi-affine function $Q(y):V^{J\setminus I}\to\mF $ is defined by $ Q(y)(z)=Q(y_{I\cap J},z). $ set $ \mcQ(y) = (\mcQ_i(y))_{i\in[h]} $ for the tower induced on $ V^{[k]\setminus I}, $ where $ \mcQ_i(y) = (Q_{i,j}(y):V^{I_i\setminus I}\to\mF)_{j\in[m_i]}. $ For the zero locus we write $ Z(\mcQ) (y) = Z(\mcQ(y)). $ 
	
	We now state several interrelated lemmas.
	
	\begin{lemma}[Derivatives(d,H+1,l+1)]\label{Derivatives}
		For any $C,D$ there exist constants $A(C,D,d,H,l),$ $B(C,D,d,H,l)$ such that if $\mcQ$ is an $(A,B,s)$-regular tower on $V^{[k]}$ of type (d,H+1,l+1) and $I\subset [k],$ then for $q^{-s}$-a.e. $y\in Z(\mcQ)_I$ the tower $\mcQ(y)$ on $ V^{[k]\setminus I} $ is $(C,D,s)$-regular. 
	\end{lemma}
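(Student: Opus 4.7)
I would argue by induction on the level index $i = 0, 1, \ldots, H+1$, showing at each step that $\mcQ_{\le i}(y)$ is $(C,D,s)$-regular for $q^{-s}$-a.e.\ $y \in Z(\mcQ)_I$ (allowing the inductive constants to strengthen as $i$ grows). The base $i=0$ is trivial. For the inductive step, suppose for contradiction that a subset $E \subset Z(\mcQ)_I$ of density $> q^{-s}$ witnesses failure at level $i$: each $y \in E$ admits some $a(y) \in \mF^{m_i}\setminus\{0\}$ with $prk_{\mcQ_{<i}(y)}\bigl(a(y)\cdot\mcQ_i(y)\bigr) \le r := C(m_i + \ldots + m_{H+1} + s)^D$. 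Pigeonhole over $a$, and discard the (few, by the inductive hypothesis and Lemma~\ref{Atom-size}) $y$'s where $\mcQ_{<i}(y)$ itself fails regularity, to pass to a subset $E_1 \subset E$ on which $a$ is fixed and $\mcQ_{<i}(y)$ is regular. The claim "low relative rank implies relative bias" for multi-affine maps then gives $\bigl|\mE_{x \in Z(\mcQ_{<i}(y))} \chi\bigl(a\cdot\mcQ_i(y)(x)\bigr)\bigr| \gtrsim q^{-r}$ on $E_1$.

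Using Lemma~\ref{Atom-size} to turn this conditional expectation into an unconditional one and Fourier-expanding $\mathbf{1}[\mcQ_{<i}(y,x)=0] = q^{-m_{<i}} \sum_{b \in \mF^{m_{<i}}} \chi(b \cdot \mcQ_{<i}(y,x))$, the bound rewrites as $\sum_b |\mE_x \chi(P_{a,b}(y,x))| \gtrsim q^{-r}$ with $P_{a,b} := a\cdot\mcQ_i + b\cdot\mcQ_{<i}$. A second pigeonhole fixes $b$ on a subset $E_2 \subset E_1$ of density $\gtrsim q^{-s'}$ in $Z(\mcQ)_I$ on which $|\mE_x \chi(P_{a,b}(y,x))| \gtrsim q^{-r-m_{<i}}$. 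Squaring this estimate and averaging over all $y \in V^I$ (Cauchy--Schwarz), I obtain
\[
\bigl|\mE_{y,x,x'} \chi\bigl(P_{a,b}(y,x) - P_{a,b}(y,x')\bigr)\bigr| \gtrsim q^{-s''}
\]
for the full multi-affine function $\tilde P(y,x,x') := P_{a,b}(y,x) - P_{a,b}(y,x')$ on the enlarged ambient space $V^I \times V^{[k]\setminus I} \times V^{[k]\setminus I}$, with $s''$ polynomial in $s, r, m_{<i}, |\mcQ_I|$.

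Now apply Theorem~\ref{Bias-Rank-par} at the appropriate sub-depth to $\tilde P$ relative to the doubled sub-tower on the enlarged ambient space, consisting of $\mcQ_I$ (once) together with two copies, in $x$ and in $x'$, of the functions in $\mcQ_{<i}$ whose support is not contained in $I$. After verifying that this doubled tower inherits regularity from $\mcQ$ via a level-by-level Fourier argument, the theorem yields a decomposition of $\tilde P$ modulo the doubled tower with partition rank $\le A'(1+s'')^{B'}$. Specialising $x' = 0$, the multi-linear parts of the $x'$-copies vanish (since $I_j \not\subset I$), and I extract a representation $a \cdot \mcQ_i(y,x) = (\text{combination of functions in } \mcQ_{<i}) + (\text{partition rank } \le A'(1+s'')^{B'})$, i.e.\ $prk_{\mcQ_{<i}}(a \cdot \mcQ_i) \le A'(1+s'')^{B'}$. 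Since $s''$ is polynomial in $m_i + \ldots + m_{H+1} + s$ with degree determined by $C, D, d, H, l$, choosing $A, B$ sufficiently large depending on these parameters makes this contradict the $(A,B,s)$-regularity of $\mcQ$ at level $i$, completing the induction.

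\textbf{Main obstacle.} The technical core is verifying that the doubled sub-tower on the enlarged ambient space inherits regularity strong enough to invoke Theorem~\ref{Bias-Rank-par}, and then carefully unpacking the resulting relative rank decomposition after specialising $x' = 0$, handling the multi-affine (not merely multi-linear) components and correctly folding $\mcQ_I$ into the relative rank modulo $\mcQ_{<i}$.
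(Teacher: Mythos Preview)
Your approach has a genuine quantitative gap that breaks the contradiction. When you Fourier-expand $\mathbf{1}[\mcQ_{<i}(y,x)=0]$ and then pigeonhole over $b\in\mF^{m_{<i}}$, you force $s''$ to depend (polynomially) on $m_{<i}=m_1+\cdots+m_{i-1}$, the total dimension of the \emph{lower} layers. But the $(A,B,s)$-regularity hypothesis only tells you that $prk_{\mcQ_{<i}}(\mcQ_i) > A(m_i+\cdots+m_{H+1}+s)^B$, a quantity involving only the \emph{upper} layers and $s$. Your final sentence asserts that $s''$ is polynomial in $m_i+\cdots+m_{H+1}+s$, but this is simply false: nothing in the setup bounds $m_1,\ldots,m_{i-1}$ in terms of the later $m_j$'s or $s$, so the inequality $A'(1+s'')^{B'}<A(m_i+\cdots+m_{H+1}+s)^B$ need not hold no matter how large you take $A,B$.

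The paper avoids this loss by arguing in the opposite direction and never Fourier-expanding the slice indicator. It proves Derivatives, Fubini, and Rank-Bias together by a joint induction on the height. For the top layer $h$ (in the interesting case $I_h\not\subset I$), it starts from the global regularity $rk_{\mcQ_{<h}}(a\cdot\mcQ_h)>A'(m_h+s')^{B'}$, applies Theorem~\ref{Bias-Rank-par} (contrapositively) to get $\operatorname{Re}\mE_{x\in Z(\mcQ_{<h})}\chi(a\cdot\mcQ_h(x))\le\varepsilon$, uses Fubini at height $h-1$ to write this as an average of slice biases $\mE_{y\in X_I}\operatorname{Re}\mE_{z\in X_{<h}(y)}\chi(a\cdot\mcQ_h(y,z))$, and finally uses Rank-Bias at height $h-1$ together with Markov to conclude that the slice rank is large for almost every $y$. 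A union bound over $a\in\mF^{m_h}$ costs only a factor $q^{m_h}$, which is absorbed by $s'$. No $m_{<h}$ ever appears. Your Cauchy--Schwarz-on-enlarged-space step is essentially an attempt to replace Fubini, but the Fourier expansion you do beforehand is exactly what makes the bookkeeping blow up.
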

	
	\begin{lemma}[Fubini(d,H+1,l+1)]\label{Fubini}
		There exist constants $A(d,H,l),B(d,H,l)$ such that if $\mcQ$ is an $(A,B,s)$-regular tower on $ V^{[k]} $ of type (d,H+1,l+1), $ I\subset[k], $ and $g:V^{[k]}\to\mC$ satisfies $ |g(x)|\le 1 $ for all $ x\in V^{[k]} $, then
		\[
		\left| \mE_{x\in Z(\mcQ)}g(x) - \mE_{y\in Z(\mcQ)_I}\mE_{z\in Z(\mcQ)(y)}g(y,z) \right| \le q^{-s}.
		\]
	\end{lemma}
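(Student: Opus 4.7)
The idea is to fibre $Z(\mcQ)$ over $Z(\mcQ)_I$ via the slicing $x=(y,z)\in V^I\times V^{[k]\setminus I}$, and to use Lemma \ref{Atom-size} and Lemma \ref{Derivatives} to compare fibre sizes. Observe that $x\in Z(\mcQ)$ forces $y\in Z(\mcQ)_I$ (because the polynomials $Q_{i,j}$ with $I_i\subset I$ depend only on $y$) and then imposes the remaining conditions $z\in Z(\mcQ)(y)$, so
\[
\mE_{x\in Z(\mcQ)}g(x)=\sum_{y\in Z(\mcQ)_I}\frac{|Z(\mcQ)(y)|}{|Z(\mcQ)|}\,\mE_{z\in Z(\mcQ)(y)}g(y,z).
\]
Since $|\mE_z g|\le 1$, the error to bound is at most $\sum_{y\in Z(\mcQ)_I}|\alpha(y)|$, where $\alpha(y):=|Z(\mcQ)(y)|/|Z(\mcQ)|-1/|Z(\mcQ)_I|$.

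\textbf{Regularity inheritance and the good set.} Restricting polynomials cannot decrease relative rank (fewer allowed reductions, fewer allowed linear combinations), so $\mcQ_I$ is $(A,B,s)$-regular and is still of type $(d,H+1,l+1)$. I would then apply Lemma \ref{Derivatives} with parameters $(C,D)$ chosen large enough that Lemma \ref{Atom-size} applies to $(C,D,s)$-regular towers of type $(d,H+1,l+1)$: this yields a set $\mcG\subset Z(\mcQ)_I$ of density $\ge 1-q^{-s}$ such that $\mcQ(y)$ is $(C,D,s)$-regular for every $y\in\mcG$. Writing $m=\dim\mcQ$ and $m_I=\dim\mcQ_I$, Lemma \ref{Atom-size} then gives
\[
|Z(\mcQ)|=(1+O(q^{-s}))q^{-m}|V^{[k]}|,\qquad |Z(\mcQ)_I|=(1+O(q^{-s}))q^{-m_I}|V^I|,
\]
and $|Z(\mcQ)(y)|=(1+O(q^{-s}))q^{-(m-m_I)}|V^{[k]\setminus I}|$ for $y\in\mcG$. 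Division yields $|\alpha(y)|=O(q^{-s})\cdot q^{m_I}/|V^I|$ for $y\in\mcG$, and summing over at most $|Z(\mcQ)_I|$ good $y$'s contributes $O(q^{-s})$ to the total error.

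\textbf{Bad fibres (the main obstacle).} The step I expect to be trickiest is bounding $\sum_{y\notin\mcG}|\alpha(y)|$ without a pointwise regularity hypothesis for $\mcQ(y)$ there. The contribution of $1/|Z(\mcQ)_I|$ is at most $|Z(\mcQ)_I\setminus\mcG|/|Z(\mcQ)_I|\le q^{-s}$ directly from Lemma \ref{Derivatives}. For the contribution of $|Z(\mcQ)(y)|/|Z(\mcQ)|$, the key move is a global accounting argument: since
\[
\sum_{y\notin\mcG}|Z(\mcQ)(y)|=|Z(\mcQ)|-\sum_{y\in\mcG}|Z(\mcQ)(y)|,
\]
and both terms on the right equal $q^{-m}|V^{[k]}|$ up to a $(1+O(q^{-s}))$ multiplicative factor by the estimates above (the second sum uses the good-$y$ estimate and the atom-size bound on $|\mcG|$), their difference is $O(q^{-s})\cdot q^{-m}|V^{[k]}|$. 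Dividing by $|Z(\mcQ)|$ bounds this contribution by $O(q^{-s})$ as well. Finally, to upgrade the resulting $O(q^{-s})$ bound to the stated $q^{-s}$, I would invoke every auxiliary lemma with the parameter $s$ replaced by $s+C_0$ for a constant $C_0$ depending only on $d,H,l$ that absorbs all accumulated implicit constants; this only requires enlarging $A(d,H,l)$ and $B(d,H,l)$.
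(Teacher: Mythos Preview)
Your approach is correct and coincides with the paper's proof. The paper normalizes slightly differently, working with $f(y):=\dfrac{|Z(\mcQ)_I|\cdot|Z(\mcQ)(y)|}{|Z(\mcQ)|}$ and invoking the exact identity $\mE_{y\in Z(\mcQ)_I}f(y)=1$ together with Lemmas \ref{Derivatives} and \ref{Atom-size} (applied with parameter $s+2$) to bound $\mE_y|f(y)-1|$; your ``global accounting'' step $\sum_{y\notin\mcG}|Z(\mcQ)(y)|=|Z(\mcQ)|-\sum_{y\in\mcG}|Z(\mcQ)(y)|$ is precisely the same move in different notation.
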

	
	\begin{lemma}[Rank-bias(d,H+1,l+1)]\label{Rank-Bias}
		There exist constants $A(d,H,l), B(d,H,l)$ such that if $ \mcQ $ is an $(A,B,s)$-regular multi-linear tower on $V^{[k]}$ of type (d,H+1,l+1) and $P:V^I\to\mF$ is a multi-linear map with $|I| \le d+1$ then
		\begin{enumerate}
			\item $|Im[\mE_{x\in Z(\mcQ)} \chi(P(x))]| \le q^{-s}$ and
			\item if $rk_\mcQ (P) = r,$ then
			\[ 
			Re[\mE_{x\in Z(\mcQ)} \chi(P(x))] \ge q^{-r}-q^{-s}.
			 \]	
		\end{enumerate}
	\end{lemma}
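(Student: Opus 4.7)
Part (1) will follow from a scaling symmetry that also yields an exact formula for $\mE_{x\in Z(\mcQ)}\chi(P(x))$ as a real number; Part (2) uses this formula together with Claim~\ref{rank-bias} applied unconditionally on $V^{[k]}$ and the inductive hypothesis Theorem~\ref{Bias-Rank-par}$(d,H,l)$ to transfer the density bound from $V^{[k]}$ to $Z(\mcQ)$.

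\textbf{Part (1).} We may assume $I \neq \emptyset$; fix $i_0 \in I$. For every $c \in \mF^*$, the substitution $x_{i_0}\mapsto cx_{i_0}$ is a bijection of $V^{[k]}$ preserving $Z(\mcQ)$, because every $Q_{i,j}\in\mcQ$ is multi-linear so its zero locus is invariant under such rescaling, while $P$ is sent to $cP$. Hence $\mE_{x\in Z(\mcQ)}\chi(P(x)) = \mE_{x\in Z(\mcQ)}\chi(cP(x))$ for every $c \in \mF^*$. Averaging over $c$ and using $\mE_{c\in\mF^*}\chi(c\alpha) = \mathbf{1}[\alpha=0] - \mathbf{1}[\alpha\neq 0]/(q-1)$ gives the closed form
\[
\mE_{x\in Z(\mcQ)}\chi(P(x)) \;=\; \frac{q\,\mP_{x\in Z(\mcQ)}(P(x)=0) - 1}{q-1},
\]
which is real. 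So $Im[\mE_{x\in Z(\mcQ)}\chi(P(x))] = 0$, proving (1) in the stronger form.

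\textbf{Part (2).} By the formula from (1), (2) becomes the density bound
\[
\mP_{x\in Z(\mcQ)}(P(x)=0) \;\ge\; \tfrac{1}{q} + \tfrac{q-1}{q}\bigl(q^{-r} - q^{-s}\bigr).
\]
Choose a decomposition $P = P' + \sum_{(i,j):\,I^{(i)}\subset I}Q_{i,j}R_{i,j}$ with $prk(P') = r$ realising $rk_\mcQ(P)$; then $P \equiv P'$ on $Z(\mcQ)$. Claim~\ref{rank-bias} gives $\mE_{x\in V^{[k]}}\chi(P'(x)) \ge q^{-r}$, and the same scaling identity applied on the ambient $V^{[k]}$ converts this into $\mP_{x\in V^{[k]}}(P'(x)=0) \ge \tfrac{1}{q} + \tfrac{q-1}{q}q^{-r}$. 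It therefore suffices to prove the one-sided equidistribution estimate $\mP_{Z(\mcQ)}(P'=0) \ge \mP_{V^{[k]}}(P'=0) - q^{-s}$. Expanding $\mathbf{1}_{Z(\mcQ)} = q^{-m}\sum_{b}\chi(b\cdot \mcQ)$ with $m = \dim \mcQ$ and controlling the normalisation using Atom-size (Lemma~\ref{Atom-size}), this reduces to the bound
\[
\Bigl|\sum_{t\in\mF,\, b\in\mF^m\setminus\{0\}} \mE_{x\in V^{[k]}}\chi\bigl(tP'(x) + b\cdot\mcQ(x)\bigr)\Bigr| \;\le\; q^{-s+O(1)}.
\]
By Theorem~\ref{Bias-Rank-par}$(d,H,l)$, each summand is small unless $tP' + b\cdot\mcQ$ has small relative rank with respect to an appropriate sub-tower of $\mcQ$. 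Whenever $b$ has a non-zero component in a top-degree layer, regularity forces the multi-linear leading part of $b\cdot\mcQ$ to have large partition rank, so the bias is negligible. If instead a low-rank scenario were to occur, one could substitute the resulting identity back into the chosen decomposition of $P$ to build a new decomposition with a partition-rank term of rank strictly less than $r$, contradicting the minimality of $r = rk_\mcQ(P)$. For $b$ supported only in layers of degree $<d$ the argument is more delicate: we use Fubini (Lemma~\ref{Fubini}) to descend to the sub-tower $\mcQ_I$ on $V^I$ and apply the inductive hypothesis there, followed by the same substitution trick.

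\textbf{Main obstacle.} The crux is bounding the sum over $b$ supported solely in layers of degree $<d$: for such $b$ the multi-linear leading part of $tP' + b\cdot\mcQ$ on $V^I$ is merely $tP'$, whose bias may be as large as $q^{-r}$, so smallness does not follow from any standard leading-part argument. The resolution requires peeling off layers of the tower via repeated use of Fubini, reapplying Bias-Rank-par at each successive height, and carefully tracking the regularity constants so that the final estimate retains the polynomial dependence on $s$ asserted by the lemma, together with the contradiction argument that rules out a low-rank coincidence between $tP'$ and the lower-degree part of $\mcQ$.
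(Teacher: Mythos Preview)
Your Part (1) is correct and in fact proves the stronger conclusion $Im[\mE_{x\in Z(\mcQ)}\chi(P(x))]=0$ via a clean scaling symmetry; the paper does not notice this and instead inducts on $|I|$ (via Fubini and Derivatives) to obtain only $|Im|\le q^{-s}$.

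Your Part (2), however, has a genuine gap. The Fourier expansion over $b\in\mF^m$ is tautological: summing $\mE_x\chi(tP'(x))\bigl[q^m\,1_{x\in Z(\mcQ)}-1\bigr]$ over $t$ collapses right back to (a constant times) the difference $\mP_{Z(\mcQ)}(P'=0)-\mP_{V^{[k]}}(P'=0)$ you set out to bound, so the ``reduction'' makes no progress. The attempt to bound the terms individually via Theorem~\ref{Bias-Rank-par} also fails for several reasons. First, $tP'+b\cdot\mcQ$ is a sum of multi-linear maps on \emph{different} index sets, not a full multi-affine map on any single $V^J$, so the bias-implies-low-rank machinery does not apply as stated. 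Second, your minimality argument is wrong: if $tP'+b\cdot\mcQ$ had low partition rank for some $t\neq 0$, rewriting $P'=t^{-1}(tP'+b\cdot\mcQ)-t^{-1}b\cdot\mcQ$ does \emph{not} produce a valid relative-rank decomposition of $P$, because the correction $t^{-1}b\cdot\mcQ$ is a scalar combination of the $Q_{i,j}$ (including those with $I_i\not\subset I$), not of the required form $\sum Q_{i,j}R_{i,j}$ with $R_{i,j}$ multi-linear on $V^{I\setminus I_i}$. Third---and this you yourself flag---for $t\neq 0$ and $b$ supported only on layers with $I_i\subsetneq I$, the leading multi-linear part on $V^I$ is $tP'$, whose bias may be as large as $q^{-r}$; there is simply no smallness to extract here, and the ``peeling plus contradiction'' sketch does not supply one.

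The paper's route for (2) is entirely different and avoids these issues: it inducts on $|I|$. One writes the rank-$r$ decomposition modulo $\mcQ$ as $\sum_{i=1}^m l_i(x_1)g_i(x_{[2,k]})+\sum_{j>m}q_j r_j$, singling out the terms linear in coordinate $1$, and conditions on the event $W=\{l_i(y)=0\ \forall i\}$ (density $\ge q^{-m}$ in $X_1$). On $W$, the map $P(y,\cdot)$ has relative rank $\le r-m$ on the derived tower $\mcQ(y)$, which by Lemma~\ref{Derivatives} is regular for almost every $y$; the inductive hypothesis at $|I|-1$ together with Lemma~\ref{Fubini} then yields $Re\ge q^{-m}\cdot q^{-(r-m)}-q^{-s}$.
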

	
	The above lemmas will be proved together by induction on the height. The implications, assuming theorem \ref{Bias-Rank-par}(d,H,l) throughout, are as follows
	\[
	\text{Rank-bias(d,H,l+1) + Fubini(d,H,l+1)}\implies \text{Derivatives(d,H+1,l+1)}
	\]
	\[
	\text{Derivatives(d,H+1,l+1)}\implies \text{Fubini(d,H+1,l+1)}
	\]
	\[
	\text{Derivatives(d,H+1,l+1) + Fubini(d,H+1,l+1)}\implies \text{Rank-bias(d,H+1,l+1)}
	\]
	
	In the base case of height $0,$ where the tower is empty, Lemmas \ref{Derivatives} and \ref{Fubini} are trivial and Lemma $\ref{Rank-Bias}$ is proved in \cite{kz-approx, lovett-rank}. We assume in the proofs that the tower is indeed of height $H+1,$ otherwise we are done by the inductive hypothesis.
	
	\begin{proof}[Proof of Lemma \ref{Derivatives} (derivatives(d,H+1,l+1))]
		We are interested in the regularity of $\mcQ(y),$ which is by definition the regularity of its associated multi-linear tower. Which multilinear maps appear? For $ Q_{i,j}\in\mcQ_i ,$ write $Q_{i,j} = \sum_{J\subset I_i} Q_{i,j}^J$ as a sum of multi-linear maps $ Q_{i,j}^J:V^J\to\mF, $ and set $Q'_{i,j} = \sum_{I_i\setminus I\subset J\subset I_i} Q_{i,j}^J,$ $\mcQ'_i = \{Q'_{i,j}\}_{j\in m_i}.$ Note that the following properties hold:
		\begin{enumerate}
			\item $\mcQ,\mcQ'$ have the same associated multi-linear variety (and hence the same regularity),
			\item $Z(\mcQ')_I = Z(\mcQ)_I$ and
			\item for any $y\in Z(\mcQ)_I,$ the multi-linear tower associated with $\mcQ(y)$ is $\mcQ'(y).$
		\end{enumerate}
		So after replacing $\mcQ$ with $\mcQ'$, we can assume without loss of generality that $\mcQ(y)$ is a multi-linear tower for any $y\in Z(\mcQ_I).$ Write $X=Z(\mcQ)$ for short. Let $ A',B' $ be the constants of Lemma \ref{Derivatives} at height $H.$ We deal separately with two cases.
		
		\textbf{The first case} (and the easier one) is when $I_{H+1}\subset I.$ In this case for any $y\in (X_I)_{<H+1}$ we have $\mcQ(y) = \mcQ_{<H+1}(y).$  For $A = 2^{B'}A',$ $B=B',$ we get that $\mcQ_{<H+1}$ is $(A',B',s+m_{H+1}+1)$-regular. Then by the inductive hypothesis we have that for $q^{-(s+m_{H+1}+1)}$ a.e. $y\in (X_I)_{<H+1}$ the tower $\mcQ(y)=\mcQ_{<H+1}(y)$ is $(C,D,s)$-regular. By Lemma \ref{Atom-size} we know that if $A,B$ are sufficiently large then  $\frac{|(X_I)_{<H+1}|}{|X_I|} \le q^{m_{H+1}+1}.$ Therefore $\mcQ(y)$ is regular for $q^{-s}$-a.e. $y\in X_I,$ as desired. 
		
		\textbf{The second case} is when $I_{H+1} \not\subset I.$ In this case we have $X_I = (X_I)_{<H+1}.$ If $A,B$ are sufficiently large then $\mcQ$ is $(A',B',100C(m_{H+1}+s)^D)$-regular. Let 
		\[
		s' = 100C(m_{H+1}+s)^D\ ,
		\  \varepsilon = q^{-s'}.
		\]
		For the rest of this proof, we will assume wherever needed that $ A,B,A',B',C,D $ are sufficiently large with respect to $ d,h,l.$
		By the inductive hpothesis we have that for $\varepsilon$-a.e. $y\in X_I$ the tower $\mcQ_{<H+1}(y)$ is $(C,D,s')$-regular. By a union bound, it is enough to show that for $ q^{-(s+1)} $-a.e. $y\in X_I$ we have
		\[
		 rk_{\mcQ_{<H+1}(y)} \left( \mcQ_{H+1}(y) \right) > C(m_{H+1}+s)^D. 
		\]
		 Fix $0\neq a\in\mF^{m_{H+1}}.$  Applying Theorem \ref{Bias-Rank-par}, we have
		\[
		Re \left[ \mE_{x\in X_{<H+1}} \chi(a\cdot \mcQ_{H+1}(x)) \right] \le \varepsilon.
		\]
		Lemma \ref{Fubini} at height $H$ yields $ \mE_{y\in X_I}  Re \left[ \mE_{z\in X_{<H+1}(y)}\chi(Q(y,z)) \right]  \le 2\varepsilon. $
		For $ \ep $-a.e. $ y\in X_I $ we can apply Lemma \ref{Rank-Bias} at height $H$ to get 
		\[ 
		 \left| \mE_{z\in X_{<H+1}(y)}\chi(Q(y,z)) \right| \le  Re \left[ \mE_{z\in X_{<H+1}(y)}\chi(Q(y,z)) \right] + \ep.
		 \]
		Therefore,   
		\[ 
		 \mE_{y\in X_I} |\mE_{z\in X_{<H+1}(y)}\chi(Q(y,z))| < 4\ep.
		\]
		By Markov's inequality, this means that for $2\sqrt{\ep}$-a.e. $y\in X_I,$ we have 
		\[ 
		|\mE_{z\in X_{<H+1}(y)}\chi(Q(y,z))| < 2\sqrt{\ep}.
		\]
		Then for $3\sqrt{\varepsilon}$-a.e. $y\in X_I$ both the above inequality holds and the tower $\mcQ^{<H+1}(y)$ is $(C,D,s')$-regular. Let $r_a(y) = rk_{\mcQ^{<H+1}(y)} (a\cdot \mcQ_{H+1}(y)).$ Applying Lemma \ref{Rank-Bias} at height $H$ we find that for these $y$ we have 
		\[
		q^{-r(y)} < 4\sqrt{\ep} < q^{-C(m_{H+1}+s)^D},
		\]
		which implies $r_a(y)>C(m_{H+1}+s)^D.$ 
		Running over all $0\neq a\in\mF^{m_{H+1}},$ we get that for $3\sqrt{\ep}q^{m_{H+1}}\le q^{-(s+1)}$-a.e. $y\in X_I$ we have $ r(y) > C(m_{H+1}+s)^D$ where $ r(y) = \max_{0\neq a\in\mF^{m_{H+1}}} r_a(y). $ This completes the proof. 
	\end{proof}
	
	\begin{proof}[Proof of Lemma \ref{Fubini} fubini(d,H+1,l+1)] By the triangle inequality,
		\[
		\left| \mE_{x\in X}g(x) - \mE_{y\in X _I}\mE_{z\in X(y)}g(y,z) \right| \le \mE_{y\in X _I} \left| \frac{|X _I|\cdot |X(y)|}{|X|} - 1 \right| . 
		 \] 
		Denoting $ f(y) = \frac{|X _I|\cdot |X(y)|}{|X|},\ \varepsilon = q^{-(s+2)}, $ and assuming $ A,B $ are sufficiently large, we note the following:
		\begin{enumerate}
			\item $ f(y) \ge 0$ for all $y\in X _I, $
			\item $ \mE_{y\in X _I} f(y) = 1 $ and
			\item by Lemmas \ref{Derivatives}(d,H+1,l+1) and \ref{Atom-size}(d,H+1,l+1), for $\ep$-a.e. $ y\in X _I, $ we have $ 1-\ep < f(y) < 1+\ep.$
		\end{enumerate}
		Let $ E\subset  X_I $  be the set of $y$ for which this last inequality holds, and let $ \rho = \frac{|E|}{|X _I|} $ (so $ 1-\ep\le\rho\le 1 $).  Then
		\begin{equation}\label{Fub-Ineq}
			\mE_{y\in X _I} |f(y)-1| = \rho\cdot\mE_{y\in E} |f(y)-1| + (1-\rho) \mE_{y\in E^c} |f(y)-1|. 
		\end{equation} 
		The first term is $\le\ep $ by the definition of $ E. $ For the second term we have
		\[
		(1-\rho) \mE_{y\in E^c} |f(y)-1| \le \ep + (1-\rho) \mE_{y\in E^c} f(y).
		  \]
		By the three properties listed above, we get that $ (1-\rho) \mE_{y\in E^c} f(y) \le 2\ep. $
		Plugging this into inequality \eqref{Fub-Ineq} yields
		\[
		 	\mE_{y\in X _I} |f(y)-1| \le 4\ep \le q^{-s}.
		 \] 
	\end{proof}
	
	\begin{proof}[Proof of Lemma \ref{Rank-Bias} rank-bias(d,H+1,l+1)]
		Let $ X = Z(\mcQ). $ First we apply Lemma \ref{Fubini}(d,H+1,l+1) to get 
		\begin{enumerate}
			\item $Re[\mE_{x\in X} \chi(P(x))] \ge Re[\mE_{x\in X_I} \chi(P(x))]-q^{-2s} $ and
			\item $|Im[\mE_{x\in X} \chi(P(x))]| \le |Im[\mE_{x\in X_I} \chi(P(x))]|+ q^{-2s}.$
		\end{enumerate}
		so we can assume without loss of generality that $I=[k].$ We will prove the lemma by induction on $|I|.$ For $|I| = 1$ the lemma follows from basic Fourier analysis. For the general case,
		we start by showing the second inequality holds. Apply Lemma \ref{Fubini} again to get 
		\[ 
		|Im[\mE_{x\in X} \chi(P(x))]| \le \mE_{y\in X_1}|Im[\mE_{z\in X(y)} \chi(P(y,z))]|+q^{-10s}.
		\]
		Let $A',B'$ be the constants for Lemma \ref{Rank-Bias} on an index set of size $ |I| -1. $ By Lemma \ref{Derivatives}(d,H+1,l+1), if $A,B$ are sufficiently large then for 
		$q^{-10s}$-a.e. $y\in X_1$ the multi-linear tower $\mcQ(y)$ is $(A',B',10s)$-regular. The induction hypothesis implies that
		\[ 
		\mE_{y\in X_1}|Im[\mE_{z\in X(y)} \chi(P(y,z))]| \le q^{-5s},
		\] 
		as desired. 
		
		Now for the first inequality, suppose modulo the maps in $ \mcQ $  we have the low partition rank presentation
		\[
		 P(x_{[k]}) = \sum_{i=1}^m l_i(x_1)g_i(x_{[2,k]}) + \sum_{j=m+1}^r q_i(x_{I_j})r_i(x_{[k]\setminus I_j}),
		 \]
		where $I_j\neq \{1\},[2,k].$
		Let $W = \{y\in X_1:l_i(y) = 0\ \forall i\in[m]\}.$ By Lemma \ref{Fubini},
		\begin{multline*}
			Re[\mE_{x\in X} \chi(P(x))] \ge \mE_{y\in X_1}Re[\mE_{z\in X(y)} \chi(P(y,z))]-q^{-10s} \\
			= \frac{|W|}{|X_1|}\mE_{y\in W}Re[\mE_{z\in X(y)} \chi(P(y,z))] + (1-\frac{|W|}{|X_1|})\mE_{y\in X_1\setminus W}Re[\mE_{z\in X(y)} \chi(P(y,z))]-q^{-10s}.
		\end{multline*}
		Note that:
		\begin{enumerate}
			\item for any $y\in W$ the rank of $P(y,\cdot)$ on $X(y)$ is $\le r-m,$
			\item $|W|/|X_1| \ge q^{-m}$ and
			\item for 
			$q^{-10s}$-a.e. $y\in X_1$ the multi-linear tower $\mcQ(y)$ is $(A',B',10s)$-regular.
		\end{enumerate}
		By these properties and the induction hypothesis for $ P(y,\cdot) $ on $ X(y) $ we can lower bound the first summand by
		\[
		 \frac{|W|}{|X_1|}\mE_{y\in W}Re[\mE_{z\in X(y)} \chi(P(y,z))] \ge q^{-r}-q^{-5s},
		\]
		and the second summand by
		\[
		 (1-\frac{|W|}{|X_1|})\mE_{y\in X_1\setminus W}Re[\mE_{z\in X(y)} \chi(P(y,z))] \ge -q^{-5s}.
		\]
	\end{proof}
	
	\section{Some reductions}\label{reductions}
	In this section we reduce the proof of Theorem \ref{Bias-Rank} for degree $d+1$ to the $ l=0 $ case of Theorem \ref{Bias-Rank-par}.
	
	\begin{lemma}
		Suppose Theorem \ref{Bias-Rank-par}(d+1,H,l) holds for $ l=0 $ and all $ H $. Then it holds for all $ l,H. $ 
	\end{lemma}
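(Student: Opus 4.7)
The plan is to induct on $l$, with the base case $l=0$ given by hypothesis. For the inductive step, assume Theorem~\ref{Bias-Rank-par}$(d+1,H',l-1)$ holds for all $H'$, and consider an $(A,B,s)$-regular tower $\mcQ$ of type $(d+1,H,l)$ together with a polynomial $P$ of degree $\le d+1$ satisfying $bias_{Z(\mcQ)}(P) \ge q^{-s}$. First I observe that if the layer $\mcQ_{H-l+1}$ has degree strictly less than $d+1$, then $\mcQ$ automatically qualifies as a tower of type $(d+1,H,l-1)$ and the inductive hypothesis applies directly. So I may assume $|I_{H-l+1}| = d+1$, and set $m = m_{H-l+1}$.

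The key structural observation is that $\deg \mcQ_{H-l+1} = d+1 = \deg P$, so the definition of relative rank permits subtracting any $\mF$-linear combination of $\mcQ_{H-l+1}$ from $P$, giving
\[
rk_\mcQ(P) = \min_{a \in \mF^{m}} rk_{\mcQ \setminus \mcQ_{H-l+1}}\bigl(P - a \cdot \mcQ_{H-l+1}\bigr),
\]
where $\mcQ \setminus \mcQ_{H-l+1}$ is of type $(d+1, H-1, l-1)$ and, with a sufficiently enlarged choice of $A, B$, remains regular at the required thresholds. I would then apply Fourier expansion of the indicator $\mathbb{1}[\mcQ_{H-l+1}(x) = 0]$ over $\mF^{m}$, combined with the Atom-size Lemma~\ref{Atom-size}, to obtain
\[
bias_{Z(\mcQ)}(P) = \sum_{a \in \mF^{m}} \mE_{x \in Z(\mcQ \setminus \mcQ_{H-l+1})}\chi\bigl(P(x) - a \cdot \mcQ_{H-l+1}(x)\bigr) + O(q^{-s-1}).
\]
Pigeonholing over the $q^{m}$ values of $a$ produces one for which the bias of $P - a \cdot \mcQ_{H-l+1}$ on $Z(\mcQ \setminus \mcQ_{H-l+1})$ is at least $q^{-(s+m+1)}$; the inductive hypothesis then bounds $rk_{\mcQ \setminus \mcQ_{H-l+1}}(P - a \cdot \mcQ_{H-l+1})$, and hence $rk_\mcQ(P)$.

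The main obstacle is the dependence on the layer size $m$: the naive argument yields a bound of the form $A_{l-1}(2+s+m)^{B_{l-1}}$, whereas the theorem demands constants depending only on $d, H, l$. Eliminating this $m$-dependence is the technical heart of the reduction. I expect this to require combining the Fourier expansion with a Parseval-type identity $\sum_{a}|S(a)|^{2} \approx 1$ (where $S(a) = \mE_{x \in Z(\mcQ \setminus \mcQ_{H-l+1})}\chi(P - a\cdot\mcQ_{H-l+1})$)---valid by the regularity of $\mcQ_{H-l+1}$ together with the rank-bias estimates from Lemma~\ref{Rank-Bias}, which give equidistribution of $\mcQ_{H-l+1}$ over $Z(\mcQ \setminus \mcQ_{H-l+1})$. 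This forces the Fourier mass to concentrate on at most a bounded-in-$s$ number of values of $a$, among which one finds an $a$ whose associated bias is $\ge q^{-s'}$ for some $s'$ independent of $m$; applying the inductive hypothesis to this $a$ then closes the argument once the resulting constants are absorbed into $A(d,H,l), B(d,H,l)$.
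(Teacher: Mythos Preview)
Your overall strategy matches the paper's: induct on $l$, strip off a single layer of degree $d+1$, Fourier-expand the indicator of that layer, and reduce to the inductive hypothesis on the shorter tower. You also correctly identify the central obstacle, namely that a naive pigeonhole over $a \in \mF^m$ yields an $m$-dependent bias lower bound, which is fatal.

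However, your proposed fix via a Parseval identity does not close this gap. Equidistribution of $\mcQ_t$ on $Z(\mcQ\setminus\mcQ_t)$ gives only $\sum_a |S(a)|^2 \lesssim 1$, not $\approx 1$; and even granting both $\sum_a |S(a)|^2 \le 2$ and $\bigl|\sum_a S(a)\bigr| \ge q^{-s}$, one cannot deduce $\max_a |S(a)| \ge q^{-O_{d,H,l,s}(1)}$. The model case $S(a) \equiv q^{-m/2}$ for all $a$ satisfies both constraints yet has $\max_a |S(a)| = q^{-m/2}$. So $L^1$/$L^2$ information alone does not force concentration on boundedly many $a$, and your sketch leaves the decisive step unproved.

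The paper eliminates the $m$-dependence by a completely different mechanism: \emph{sub-additivity of relative rank} combined with the regularity of $\mcQ$. Choose $b\in\mF^m$ minimizing $r_a := prk_{\mcQ\setminus\mcQ_t}(P + a\cdot\mcQ_t)$. For any $a\ne b$, sub-additivity gives
\[
r_a + r_b \;\ge\; prk_{\mcQ\setminus\mcQ_t}\bigl((a-b)\cdot\mcQ_t\bigr) \;>\; A(m+s)^B,
\]
and since $r_b\le r_a$ this forces $r_a > \tfrac{A}{2}(m+s)^B$. The contrapositive of the inductive hypothesis (applied to the tower $\mcQ\setminus\mcQ_t$, which has one fewer degree-$(d+1)$ layer) then yields $|S(a)| < q^{-m-4s}$ for every $a\ne b$. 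Hence all but the single term $a=b$ contribute at most $q^{-4s}$ in total, so $|S(b)| \gtrsim q^{-2s}$ with no $m$-dependence, and one final application of the inductive hypothesis at $a=b$ bounds $prk_\mcQ(P) \le prk_{\mcQ\setminus\mcQ_t}(P+b\cdot\mcQ_t)$. This rank-based ``at most one large coefficient'' argument is the missing idea in your proposal.
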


	\begin{proof}
		By induction on $ l. $ Suppose the theorem holds for (d+1,H,l). Let $ \mcQ $ be an $(A,B,s)$-regular tower on $ V^{[k]} $ of type (d+1,H,l+1) and $ P:V^I\to\mF $ a full multi-affine map of degree $ \le d+1, $ with $ bias_{Z(\mcQ)} (P) \ge q^{-s}. $ Again, we denote $ X=Z(\mcQ) $ for short. We saw in the last section that Theorem \ref{Bias-Rank-par}(d+1,H,l) implies Lemma \ref{Fubini} for (d+1,H+1,l+1), so
		\[ 
		| \mE_{x\in X_I} \chi(P(x)) | \ge q^{-2s},
		 \]
		i.e. we can assume without loss of generality that $ I=[k]. $ If $ \mcQ $ is of degree $ \le d $ then we are done. Suppose not, then $ k=d+1 $ and there is some layer $ t\in[H] $ with $ \mcQ_t $ of degree $ d+1 $. Let $ \mcQ_{\neq t} = (\mcQ_i)_{i\in[H]\setminus\{t\}}$ and $ X_{\neq t} = Z(\mcQ_{\neq t}). $ By Fourier analysis, we have 
		\begin{equation}\label{layer-peeling}
			|\mE_{x\in X} \chi(P(x))| \le  \frac{|X_{\neq t}|}{|X|}q^{-m_t} \sum_{a\in\mF^{m_t}} | \mE_{x\in X_{\neq t}}\chi(P(x)+a\cdot \mcQ_t(x)) |.
		\end{equation} 
		
		By Lemma \ref{Atom-size}(d+1,H,l+1) (which follows from Theorem \ref{Bias-Rank-par}(d+1,H,l)),\\
		 $ \frac{|X_{\neq t}|}{|X|}q^{-m_t} \le q. $ As for the second term, choose  $ b\in\mF^{m_t} $ such that \linebreak 
        $ prk_{\mcQ_{\neq t}} ( P(x)+b\cdot \mcQ_t(x) ) $ is minimal. By sub-additivity of relative partition rank, for any $ b\neq a \in\mF^{m_t}$ we must have
		 \[ 
		 prk_{\mcQ_{\neq t}} ( P(x)+a\cdot \mcQ_t(x) ) \ge \frac{A}{2}(m_t+s)^B.
		  \]
		By Theorem \ref{Bias-Rank-par}(d+1,H,l) (assuming $ A,B $ are sufficiently large) this means that for any $  b\neq a \in\mF^{m_t}, $ 
		\[ 
		 | \mE_{x\in X_{\neq t}}\chi(P(x)+a\cdot \mcQ_t(x)) | \le q^{-m_t-4s}.
		 \] 
		Plugging these estimates into inequality \eqref{layer-peeling} yields 
		\[ 
		q^{-2s-1} \le  | \mE_{x\in X_{\neq t}}\chi(P(x)+b\cdot \mcQ_t(x)) |.
		 \]
		Now apply Theorem \ref{Bias-Rank-par}(d+1,h,l) to conclude that 
		\[ 
		prk_\mcQ (P(x)) \le prk_{\mcQ_{\neq t}} ( P(x)+b\cdot \mcQ_t(x) ) \le A(1+s)^B.
		 \] 
	\end{proof}

	Therefore all we have to do is prove Theorem \ref{Bias-Rank-par}(d+1,H,0) in order to deduce Theorem \ref{Bias-Rank} at degree $ d+1. $ The following lemma allows us to reduce further to the case that $ P,\mcQ $ are multi-linear and defined on the same index set. From now on we assume Theorem \ref{Bias-Rank}(d,H) holds for all $ H, $ and work towards proving Theorem \ref{Bias-Rank-par}(d+1,H,0) for all $ H.$ We may safely forget about the auxillary parameter $ l $ of Theorem \ref{Bias-Rank-par}, it has served its purpose. The following lemma allows us to reduce further to the case where $ P,\mcQ $ are multi-linear and are defined on the same index set.
	\begin{lemma}\label{affine-to-linear}
		There exist $A(d,h),B(d,h)$ such that if $ \mcQ $ is an $ (A,B,s) $-regular tower of degree $ \le d $ and height $ \le h $ on $ V^{[k]}$ and $P:V^I\to\mF$ is a full multi-affine map of degree $|I|\le d+1$ then 
		\[
		 |\mE_{x\in X} \chi(P(x))| \le Re[\mE_{x\in \tilde{X}_I} \chi(\tilde{P}(x))] + q^{-s},
		 \]
		where $ X=Z(\mcQ) $ and $ \tilde P,\tilde X $ are the corresponding multi-linear map and variety.
	\end{lemma}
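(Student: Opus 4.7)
The proof follows the short calculation for $bias(P)\le bias(\tilde P)$ in \cite{Lov}, adapted to the constrained variety $X=Z(\mcQ)$, and proceeds by induction on $|I|$ after an initial reduction.

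First, I would apply Lemma \ref{Fubini} at degree $\le d$, which holds by the inductive hypothesis on $d$, to reduce to the marginal variety. Since $P(x)$ depends only on $x_I$, Fubini yields
\[
\bigl|\mE_{x\in X}\chi(P(x)) - \mE_{y\in X_I}\chi(P(y))\bigr| \le q^{-s-10}
\]
provided $A,B$ are sufficiently large. Hence it suffices to prove the stronger inequality
\[
\bigl|\mE_{y\in X_I}\chi(P(y))\bigr| \le Re\bigl[\mE_{y\in\tilde X_I}\chi(\tilde P(y))\bigr] + q^{-s-5}.
\]
The base case $|I|=1$ is immediate from Fourier analysis: $P(y)=\tilde P(y)+c$ is affine, and a direct computation on the affine subspace $X_I$ and its tilded linear counterpart $\tilde X_I$ shows both sides are either $0$ or $1$ and agree exactly.

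For the inductive step $|I|\ge 2$, I fix a coordinate $j=1$ and apply Lemma \ref{Fubini} once more with index set $\{1\}$ to peel off $y_1\in V_1$:
\[
\mE_{y\in X_I}\chi(P(y)) = \mE_{y_1\in(X_I)_{\{1\}}}\mE_{y'\in X_I(y_1)}\chi(P(y_1,y')) + O(q^{-s-5}).
\]
By Lemma \ref{Derivatives} applied to $\mcQ_I$, for almost every $y_1\in(X_I)_{\{1\}}$ the fiber tower $\mcQ_I(y_1)$ on $V^{[2,|I|]}$ is $(A',B',s)$-regular with slightly weakened parameters. For such $y_1$, the restriction $P(y_1,\cdot):V^{[2,|I|]}\to\mF$ is a full multi-affine map of size $|I|-1$ whose multi-linear top part is $\tilde P(y_1,\cdot)$, so the inductive hypothesis of the lemma at $|I|-1$ applies fiberwise to give, for almost every $y_1$,
\[
\bigl|\mE_{y'\in X_I(y_1)}\chi(P(y_1,y'))\bigr| \le Re\bigl[\mE_{y'\in\tilde X_I(y_1)}\chi(\tilde P(y_1,y'))\bigr] + q^{-s-4}.
\]

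To assemble, I take absolute values outside, integrate over $y_1$, and pull $Re$ outside the (real) integral. Then I use Lemma \ref{Atom-size} at degree $\le d$ to replace the fiber weight $|X_I(y_1)|\cdot|(X_I)_{\{1\}}|/|X_I|$ by its tilded analog up to $O(q^{-s-3})$ error, and a final application of Fubini on the tilded side reassembles the nested average into $\mE_{y\in\tilde X_I}\chi(\tilde P(y))$. All accumulated errors are absorbed into $q^{-s}$ by choosing $A,B$ sufficiently large. The main obstacle will be the careful tracking of regularity parameters through the recursion: each application of Derivatives, Fubini, and Atom-size degrades the constants by a polynomial factor, and the induction on $|I|\le d+1$ must preserve the hypotheses of the auxiliary lemmas at every level. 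This is achieved by choosing $(A,B)=(A(d,h),B(d,h))$ to grow sufficiently fast in $d$ and $h$ so that all induced weaker parameters still satisfy the hypotheses required to invoke the fiberwise inductive step.
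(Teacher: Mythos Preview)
Your inductive step has a genuine gap: the claim that ``the multi-linear top part of $P(y_1,\cdot)$ is $\tilde P(y_1,\cdot)$'' is false in general. Writing $P=\sum_{J\subset I}P_J$ with $P_J$ multi-linear, the top multi-linear part of $y'\mapsto P(y_1,y')$ on $V^{I\setminus\{1\}}$ consists of all terms with $I\setminus\{1\}\subset J$, namely
\[
\tilde P(y_1,y')+P_{I\setminus\{1\}}(y'),
\]
not $\tilde P(y_1,y')$ alone. Consequently the inductive hypothesis applied fiberwise yields
\[
\bigl|\mE_{y'\in X_I(y_1)}\chi(P(y_1,y'))\bigr|\le Re\bigl[\mE_{y'\in\widetilde{X_I(y_1)}}\chi\bigl(\tilde P(y_1,y')+P_{I\setminus\{1\}}(y')\bigr)\bigr]+q^{-s-4},
\]
and the extra factor $\chi(P_{I\setminus\{1\}}(y'))$, which does not depend on $y_1$ but does depend on $y'$, obstructs the reassembly into $Re[\mE_{y\in\tilde X_I}\chi(\tilde P(y))]$. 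You cannot strip it by averaging in $y_1$, and stripping it by taking another absolute value would throw away the $Re$ and leave you where you started with one fewer coordinate---which is precisely the paper's iteration, not an induction on the full statement.

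The paper avoids this by linearizing one coordinate at a time rather than invoking the full inductive hypothesis. It writes $P(x)=A(x_{[k]\setminus\{i\}})\cdot x_i+b(x_{[k]\setminus\{i\}})$, applies Fubini to factor the inner average as $\chi(b(x))\,\mE_{y\in X(x)}\chi(A(x)\cdot y)$, and uses $|\chi(b)|=1$ together with the elementary fact that for a \emph{linear} form $\ell$ on an affine subspace $W$ one has $|\mE_{y\in W}\chi(\ell(y))|=\mE_{y\in\tilde W}\chi(\ell(y))$. This produces a new pair $(X',P')$ in which the $i$-th coordinate is already linear; iterating over all coordinates lands on $(\tilde X,\tilde P)$, and a final appeal to Lemma~\ref{Rank-Bias} turns $|\cdot|$ into $Re[\cdot]$. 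Your scheme can be repaired along these lines, but not as a straight induction on the statement of the lemma.
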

	
	\begin{proof}
		By applying Lemma \ref{Fubini}, we get 
		\[
		 |\mE_{x\in X} \chi(P(x))| \le |\mE_{x\in X_I} \chi(P(x))| + q^{-2s},
		 \]
		so we can assume without loss of generality that $ I=[k]. $ Now choose some $i\in [k]$ and write $ P(x) = A(x_{[k]\setminus\{i\}})\cdot x_i + b(x_{I\setminus\{i\}}),$ where  $A,b$ are multi-affine maps.
		Again using Lemma \ref{Fubini} twice, we have 
		\begin{multline*}
			|\mE_{x\in X} \chi(P(x))| \le |\mE_{x\in X_{I\setminus\{i\}}} \chi(b(x)) \mE_{y\in X(x)} \chi(A(x)\cdot y)| + q^{-10ks} \\
			\le \mE_{x\in X_{I\setminus\{i\}}}  |\mE_{y\in X(x)} \chi(A(x)\cdot y)| + q^{-10ks} \\
			=  \mE_{x\in X_{I\setminus\{i\}}} \mE_{y\in \widetilde{X(x)}} \chi(A(x)\cdot y) + q^{-10ks} \\
			\le |\mE_{x\in X'} \chi(A(x_{[k]\setminus\{i\}})\cdot x_i)|+ q^{-5ks},
		\end{multline*}
		where $X'$ is the variety associated with $ X $ which is linear in the $i$-th coordinate and $ A(x_{[k]\setminus\{i\}})\cdot x_i $ is the multi-affine map associated with $ P $ which is linear in the $i$-th coordinate. Iterating this inequality over and over for all the other coordinates, we end up with
		\begin{multline*}
			|\mE_{x\in X} \chi(P(x))| \le |\mE_{x\in \tilde X} \chi(\tilde P(x))|+(k+1)q^{-5ks}
			\le Re[\mE_{x\in \tilde X} \chi(\tilde f(x))] + q^{-s},
		\end{multline*}
		where the second inequality follows from Lemma \ref{Rank-Bias}.
	\end{proof}

	\section{Many low rank approximations}\label{approximations}

	In the previous section we showed that in order to prove Theorem \ref{Bias-Rank} for degree $d+1,$ the only new case we have to deal with is when $P:V^{[d+1]}\to\mF$ is multi-linear and $ \mcQ $ is a multi-linear tower on $V^{[d+1]}$ of degree $\le d.$ In this section we will prove the existence of an abundance of relative low rank approximations, and in the next sections we will "glue" them together to get a genuine relative low rank presentation. We start with a useful algebraic claim which we will need here and in later sections.
	
		\begin{claim}\label{mult-coord}
		Let $ \mcQ $ be a tower on $ V^{[k]} $ and $ \textbf{l}\in \mN^k$ a vector. Define $ \mcQ^{\otimes \textbf{l}} $ to be the tower on $ (V_1)^{l_1} \times \ldots \times (V_k)^{l_k}$ obtained by multiplying the first coordinate $ l_1 $ times, the second $ l_2 $ times and so on - i.e. a tower of the same height as $ \mcQ $ with
		\[ 
		\left( \mcQ^{\otimes \textbf{l}} \right)_i = \bigcup_{ \textbf{j}\in \prod_{t \in I_i} [l_t] } \mcQ_i( (x_{t,j_t})_{t \in I_i} ) .     
		 \]
		Define $ \mcQ^{\times \textbf{l}} $ to be the tower composed of the same equations, but on $ V^{[kl_1\ldots l_k]}, $ so it has $ l_1\ldots l_k $ as many layers as $ \mcQ $ (We order the layers corresponding to $ \mcQ_i $ arbitrarily within themselves, but keep the order otherwise).
		Then if $\mcQ$ is $(A(l_1\ldots l_k)^B,B,s)$-regular then both $ \mcQ^{\otimes \textbf{l}} $ and $ \mcQ^{\times \textbf{l}} $ are $ (A,B,s) $-regular.
	\end{claim}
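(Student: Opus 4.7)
My plan is to handle the two constructions separately; in both cases the strategy is to reduce a low-rank presentation of a layer of the blown-up tower to one of a nonzero linear combination of a layer of $\mcQ$ itself, which is forbidden by the regularity hypothesis. Throughout, write $L := l_1 \cdots l_k$. For $\mcQ^{\times \textbf{l}}$ the argument is essentially bookkeeping: the $L$ copies of variables are disjoint. Fix a layer $\mcQ_i^{(\alpha)}$ and a nonzero $a \in \mF^{m_i}$. Since $prk_\mcQ(P)$ only allows corrections by $Q_j$ with $I(Q_j) \subseteq I(P)$, and since every equation from a copy $\beta \ne \alpha$ lives on variables disjoint from the copy-$\alpha$ ones, the only admissible corrections come from $\mcQ_{<i}^{(\alpha)}$. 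Hence
\[
prk_{\mcQ^{\times \textbf{l}}_{<\mcQ_i^{(\alpha)}}}\bigl(a \cdot \mcQ_i^{(\alpha)}\bigr) \;=\; prk_{\mcQ_{<i}}(a \cdot \mcQ_i) \;>\; AL^B\bigl(\textstyle\sum_{i' \ge i} m_{i'} + s\bigr)^B
\]
by $(AL^B,B,s)$-regularity of $\mcQ$, and since the total number of equations in the subsequent layers of $\mcQ^{\times \textbf{l}}$ is at most $L\sum_{i' \ge i} m_{i'}$, this gives $(A,B,s)$-regularity of $\mcQ^{\times \textbf{l}}$.

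For $\mcQ^{\otimes \textbf{l}}$ I would argue by specialization. Fix a layer $i$, a nonzero $a = (a_{j',\textbf{j}})$, and choose $\textbf{j}_0 \in \prod_{t\in[k]}[l_t]$ so that $a' := a_{\cdot,\, \textbf{j}_0|_{I_i}} \ne 0$. Substitute $x_{t,j_0(t)} \leftarrow y_t$ and $x_{t,j} \leftarrow 0$ for $j \ne j_0(t)$; this sends $a \cdot \mcQ_i^{\otimes \textbf{l}}$ to the nonzero combination $a' \cdot \mcQ_i(y_{I_i})$. Given any presentation $a \cdot \mcQ_i^{\otimes \textbf{l}} = R + \sum_{i',j',\textbf{j}''} S_{i',j',\textbf{j}''} \cdot Q_{i',j'}((x_{t,j''_t})_{t \in I_{i'}})$ witnessing relative partition rank $\le r$, the specialization sends $R$ to some $R'$ of partition rank $\le r$. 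The key claim is that, provided some $t^* \in I_i$ has $l_{t^*} \ge 2$, every correction term vanishes under the substitution. Indeed, viewing the ambient index set as $\{(t,j) : t \in I_i,\ j \in [l_t]\}$, the coefficient $S_{i',j',\textbf{j}''}$ is multi-linear on the complement of $\{(t,j''_t) : t \in I_{i'}\}$, hence is killed whenever its index set contains some $(t,j)$ with $j \ne j_0(t)$. A case split on whether $t^* \in I_{i'}$ and on whether $j''_{t^*}$ equals $j_0(t^*)$ shows that either $S$ vanishes, or else $j''_{t^*} \ne j_0(t^*)$ and then the factor $Q_{i',j'}((x_{t,j''_t})_{t \in I_{i'}})$ itself vanishes because one of its arguments is set to $0$. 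In the degenerate case $l_t = 1$ for every $t \in I_i$ the blow-up at layer $i$ is tautological and $prk_{\mcQ^{\otimes \textbf{l}}_{<i}}(\mcQ_i^{\otimes \textbf{l}}) = prk_{\mcQ_{<i}}(\mcQ_i)$ directly. Either way one obtains
\[
prk_{\mcQ^{\otimes \textbf{l}}_{<i}}(\mcQ_i^{\otimes \textbf{l}}) \;\ge\; prk_{\mcQ_{<i}}(\mcQ_i) \;>\; AL^B\bigl(\textstyle\sum_{i' \ge i} m_{i'} + s\bigr)^B \;\ge\; A\bigl(\textstyle\sum_{i' \ge i} m_{i'}^{\otimes \textbf{l}} + s\bigr)^B,
\]
since $m_{i'}^{\otimes \textbf{l}} = m_{i'} \prod_{t \in I_{i'}} l_t \le L m_{i'}$, which yields the desired $(A,B,s)$-regularity of $\mcQ^{\otimes \textbf{l}}$.

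The main obstacle is the case analysis in the specialization step for $\mcQ^{\otimes \textbf{l}}$. Survival of the factor $Q$ forces $\textbf{j}''|_{I_{i'}} = \textbf{j}_0|_{I_{i'}}$, while survival of the multi-linear coefficient $S$ forces its index set to miss every coordinate sent to zero; these conditions turn out to be mutually incompatible as soon as $l_{t^*} \ge 2$ for some $t^* \in I_i$. Carrying out this joint analysis cleanly, in particular the borderline subcase $l_{t^*} = 2$ where generically $S$ could survive but is blocked precisely because $j''_{t^*} \ne j_0(t^*)$ kills $Q$, is the one step where a little care is needed. Everything else is routine regularity arithmetic, comparing $\sum m_{i'}^{\otimes \textbf{l}} \le L \sum m_{i'}$ against the factor $AL^B$ built into the regularity assumption on $\mcQ$.
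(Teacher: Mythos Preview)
Your approach via specialization is the same as the paper's, and the regularity arithmetic at the end is correct. However, the paper's argument is considerably shorter and sidesteps the case analysis you carry out. The paper does \emph{not} claim that the correction terms vanish under the substitution $x_{t,j}\mapsto 0$ for $j\neq j_0(t)$; it only needs that whatever survives is still a valid presentation relative to $\mcQ_{<i}$. Indeed, under the substitution each surviving factor $Q_{i',j'}((x_{t,j''_t})_{t\in I_{i'}})$ becomes either $0$ or $Q_{i',j'}(y_{I_{i'}})$, and any surviving coefficient specializes to a multi-linear map in the $y$-variables. The specialized identity therefore witnesses $prk_{\mcQ_{<i}}(a_{\textbf{j}_0}\cdot\mcQ_i)<rk_{\mcQ_{<i}}(\mcQ_i)$, forcing $a_{\textbf{j}_0}=0$ for every $\textbf{j}_0$. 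That is all that is required.

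Your stronger claim that every correction term vanishes rests on the assertion that each coefficient $S_{i',j',\textbf{j}''}$ is multi-linear on the \emph{entire} complement $\{(t,j):t\in I_i,\ j\in[l_t]\}\setminus\{(t,j''_t):t\in I_{i'}\}$. But the linear combination $\sum_{\textbf{j}} a_{\textbf{j}}\cdot\mcQ_i((x_{t,j_t}))$ is not itself multi-linear (nor full multi-affine) on this ambient index set, so the definition of relative partition rank does not force the correction coefficients to live on the full complement. In the natural reading, $S$ need only be multi-linear on some index set disjoint from that of $Q$; then a correction term with $\textbf{j}''=\textbf{j}_0|_{I_{i'}}$ and $S$ depending only on the variables $x_{t,j_0(t)}$ survives intact. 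So the vanishing claim, and the $t^*$ case split built on it, are not clearly justified --- but, as above, they are also unnecessary.
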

	
	\begin{proof}
		We may assume without loss of generality that $ \mcQ $ is multi-linear. Because there are $ l_1\ldots l_k $ times as many maps, it is enough to show that the relative ranks are unchanged. Suppose we have a linear combination with 
		\[
			rk_{\mcQ^{\otimes \textbf{l}}_{<i}} \left( \sum_{\textbf{j} \in \prod_{t \in I_i} [l_t] } a_{\textbf{j}} \cdot \mcQ_i ( (x_{t,j_t})_{t \in I_i} ) \right) < rk_{\mcQ_{<i}} (\mcQ_i).
		\]
		For any  $\textbf{j}\in \prod_{t \in I_i} [l_t],$ plugging in $x_{\alpha,\beta} = 0$ for all $ \beta\neq j_\alpha, $ we get 
		\[
		rk_{\mcQ_{<i}} ( a_{\textbf{j}}\cdot \mcQ_i ) < rk_{\mcQ_{<i}} (\mcQ_i),
		\]
		which implies that $ a_{\textbf{j}} = 0.$ The proof for $  \mcQ^{\times \textbf{l}} $ is identical.
	\end{proof}

	Now we are ready to prove that we can find an abundance of low rank approximations of $ P $ on $ Z(\mcQ). $ The following is inspired by lemma 12 in \cite{Mi}.
	
	\begin{proposition}[Low rank approximations]\label{approx}
		There exist constants $A(d,H),B(d,H)$ such that the following holds: If  $ \mcQ $ is an $(A,B,s)$-regular multi-linear tower on $ V^{[d+1]}$ of degree $\le d$ and height $\le H,$ $ X=Z(\mcQ), $ and $P:V^{[d+1]}\to\mF$ is a multi-linear map satisfying
		$bias_X (P) \ge q^{-s},$ then there exists a set $E\subset (X_{d+1})^{100s}$ of density $\ge q^{-10s}$ such that for all $ t\in E $ there is a multi-linear function $\phi_t:V^{[d]}\to\mF^{100s}$  with
		\[
		\mP_{x\in X_t\cap \{\phi_t(x_{[d]})=0\}} (P(x) = 0) \ge 1-q^{-s},
		\]
		where $X_t = \{x\in X : x_{[d]}\in \bigcap_{i=1}^{100s} X(t_i)\}.$
	\end{proposition}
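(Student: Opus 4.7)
Write $P(x) = A(x_{[d]}) \cdot x_{d+1}$ with $A : V^{[d]} \to V_{d+1}^*$ multilinear, and denote $g(z) := \mE_{y \in X(z)} \chi(A(z)\cdot y)$. The strategy is to leverage Fubini plus an $L^{2M}$-Cauchy--Schwarz (for a small absolute constant $M$) to produce many tuples $\vec t \in X_{d+1}^{2M}$ for which $P$ admits a low relative rank approximation on $X_t$; the components of that approximation, padded with zero functions, will make up $\phi_t$.

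By Lemma \ref{Fubini} (inductively available at degree $d$) applied with $I = [d]$,
\[
q^{-s} \le |\mE_{x\in X}\chi(P(x))| \le \bigl|\mE_{z\in X_{[d]}} g(z)\bigr| + q^{-10s},
\]
so $|\mE_z g(z)| \ge q^{-s}/2$. Raising to the $2M$-th power via H\"older and expanding using the linearity of $P$ in $x_{d+1}$,
\[
\mE_z|g(z)|^{2M} \ge 2^{-2M}q^{-2Ms}, \qquad |g(z)|^{2M} = \mE_{\vec y \in X(z)^{2M}} \chi\bigl(A(z)\cdot\sigma(\vec y)\bigr),
\]
where $\sigma(\vec y) := \sum_{i=1}^{M} y_i - \sum_{i=M+1}^{2M} y_i$. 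Pass to the derived tower $\tilde\mcQ := \mcQ^{\otimes(1,\ldots,1,2M)}$ on $V^{[d]} \times V_{d+1}^{2M}$, which remains regular by Claim \ref{mult-coord}; its zero set is $\{(z,\vec y) : (z, y_i) \in X\ \forall i\}$. Lemma \ref{Atom-size} applied to $\tilde\mcQ$ lets me swap the order of integration at negligible cost:
\[
\mE_{\vec y \in X_{d+1}^{2M}} \mE_{z \in \bigcap_i X(y_i) \cap X_{[d]}} \chi\bigl(A(z) \cdot \sigma(\vec y)\bigr) \ge q^{-3Ms}.
\]
Markov's inequality now furnishes a set $E \subset X_{d+1}^{2M}$ of density $\ge q^{-10s}$ (since $M$ is an absolute constant) on which the inner expectation has absolute value $\ge q^{-O(Ms)}$.

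For $t \in E$, set $\phi_t^{(0)}(z) := (P(z, t_i))_{i=1}^{2M} \in \mF^{2M}$ and $\pi(X_t) := \bigcap_i X(t_i)\cap X_{[d]}$; since $A(z)\cdot\sigma(\vec t) = \sigma\cdot\phi_t^{(0)}(z)$, the Markov estimate says precisely that $\sigma \cdot \phi_t^{(0)}$ has large bias on $\pi(X_t)$. By Lemma \ref{Derivatives} applied to $\tilde\mcQ$, $\pi(X_t)$ is itself regular for most $t \in E$, and the inductive Rank--Bias Lemma \ref{Rank-Bias} at degree $d$ then gives a low-partition-rank presentation for $\sigma \cdot \phi_t^{(0)}$ on $\pi(X_t)$. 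To conclude, I Fourier-expand $1_{P = 0}$ (over $b \in \mF$) and $1_{\phi_t^{(0)} = 0}$ (over $\vec a \in \mF^{2M}$) inside $\mE_{x \in X_t}[1_{P = 0} \cdot 1_{\phi_t^{(0)} = 0}]$: each nontrivial cross term becomes a character of the form $\chi(A(x_{[d]}) \cdot (b x_{d+1} + \sum a_i t_i))$, whose bias on $X_t$ is handled by rerunning the Cauchy--Schwarz scheme above with $bP$ in place of $P$ and $\sum a_i y_i$ in place of $\sigma(\vec y)$. Packing all the resulting partition-rank witnesses into $\phi_t$ (and padding to dimension $100s$) yields $\mP_{x \in X_t \cap \{\phi_t = 0\}}(P(x) = 0) \ge 1 - q^{-s}$.

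The chief obstacle is this last step: upgrading the single-character bias estimate on $\pi(X_t)$ to simultaneous control of every cross term in the double Fourier expansion. The plan is to apply the $L^{2M}$-Cauchy--Schwarz uniformly to all $(b,\vec a) \in \mF^\times \times \mF^{2M}$, absorbing the $q^{O(M)}$ union bound into the constants $A, B$, and to use Claim \ref{mult-coord} together with Rank--Bias on the regular derived variety $\pi(X_t)$ to produce the uniform rank bounds needed to drive the final Fourier computation.
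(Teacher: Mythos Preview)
Your approach diverges from the paper's, and the divergence is where the gap lies. Unwinding your final Fourier expansion (using that $X(z)$ is a linear subspace, so $\mE_{y\in X(z)}\chi(bA(z)\cdot y)=1_{A(z)\perp X(z)}$ for $b\neq 0$) one finds, up to Fubini errors,
\[
\mP_{x\in X_t\cap\{\phi_t^{(0)}=0\}}(P(x)=0)\;=\;q^{-1}+(1-q^{-1})\,\frac{p}{r},
\]
where $p=\mP_{z\in\pi(X_t)}\bigl(A(z)\perp X(z)\bigr)$ and $r=\mP_{z\in\pi(X_t)}\bigl(\phi_t^{(0)}(z)=0\bigr)$. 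Since $A(z)\perp X(z)$ forces $\phi_t^{(0)}(z)=0$ (each $t_i\in X(z)$), we have $p\le r$, and the target $1-q^{-s}$ is exactly the statement that the ``false positive'' probability $r-p$ is at most $O(q^{-s})\cdot r$. With only a \emph{constant} number $2M$ of test points this is simply false in general: whenever $A(z)\not\perp X(z)$ the event $\phi_t^{(0)}(z)=0$ still occurs with probability $q^{-2M}$, so $r-p$ can be of constant order while $r$ itself is only guaranteed to be $\ge q^{-O(s)}$. Neither the low relative rank of the single combination $\sigma\cdot\phi_t^{(0)}$ nor a uniform Cauchy--Schwarz over $(b,\vec a)$ touches this quantity---the cross terms you propose to bound are, as the computation above shows, all equal to $p$ and do not need to be small; what must be small is $r-p$. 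Packing partition-rank witnesses into $\phi_t$ only shrinks the set $\{\phi_t=0\}$ further without any mechanism forcing $P$ to vanish there.

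The paper's argument is exactly the probabilistic test your calculation is implicitly asking for: take $l=100s$ test points, so that the false-positive probability is $q^{-100s}$; a short averaging/Markov argument then gives a set $E$ of density $\ge q^{-10s}$ on which $p\ge q^{-5s}$ and $r-p\le q^{-25s}$ simultaneously, which is precisely $p/r\ge 1-q^{-s}$. No Cauchy--Schwarz amplification or rank decomposition is needed (and note also that the statement fixes $E\subset(X_{d+1})^{100s}$, so your constant-length tuples would in any case have to be padded, which changes $X_t$).
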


	\begin{proof}
	Choosing a non-degenerate bilinear form on $V^{d+1},$ we can write \linebreak
    $P(x) = A(x_{[d]})\cdot x_{d+1},$ where $A:V^{[d]}\to V^{d+1}$ is multi-linear. Apply Lemma \ref{Fubini} to get
		\begin{equation}\label{many-zeros}
			q^{-2s} \le \mE_{x_{[d]}\in X_{[d]}} \mE_{x_{d+1}\in X(x)} A(x_{[d]})\cdot x_{d+1} = \mP_{x_{[d]}\in X_{[d]}} \left[A(x_{[d]})\perp X(x_{[d]})\right],
		\end{equation}
  where $v\perp w$ means $v\cdot w =0.$
		Now, fix $x_{[d]}\in X_{[d]}.$ To test for the event above,  choose $ t\in (X(x_{[d]}))^l $ at random where $ l=100s $ and define
		\[
		\phi_t(x_{[d]}) = (A(x_{[d]})\cdot t_1,\ldots, A(x_{[d]})\cdot t_l).
		\]
		If $ A(x_{[d]})\perp X(x_{[d]}), $ then $\phi_t(x) = 0$ for any $t\in (X(x_{[d]}))^l$ and if $ A(x_{[d]})\not\perp X(x_{[d]}) $ then 
		\[
		\mP_{t\in X(x_{[d]})^l} (\phi_t(x) = 0) = q^{-l}.
		\]
		Therefore,
		\[
			\mE_{x_{[d]}\in X_{[d]}} 1_{A(x_{[d]})\not\perp X(x_{[d]})}\mE_{t\in (X(x_{[d]}))^l} 1_{\phi_t(x) = 0} \le q^{-l}.
		\]
		If $ A,B $ are sufficiently large, then by Claim \ref{mult-coord} and Lemma \ref{Fubini} we have 
		\[
		\mE_{t\in (X_{d+1})^l} \mE_{x\in X_t} 1_{A(x_{[d]})\not\perp X(x_{[d]})} 1_{\phi_t(x) = 0} \le q^{-l}+q^{-100s} \le q^{-50s}.
		\]
		By Markov's inequality, for $q^{-25s}$-a.e. $t\in X_{d+1}^l,$ we have
		\begin{equation}\label{false-zero}
			\mE_{x\in X_t} 1_{A(x_{[d]})\not\perp X(x_{[d]})} 1_{\phi_t(x) = 0} \le q^{-25s}.
		\end{equation}
		Also, by Lemma \ref{Fubini} together with inequality \eqref{many-zeros}   
		\[ 
		\mE_{t\in X_{d+1}^l}\mE_{x\in X_t} 1_{A(x_{[d]}) \perp X(x_{[d]})} \ge  q^{-3s}.
		\]
		Markov's inequality gives us
		\[
		\mP_{t\in X_{d+1}^l} \left[ \mE_{x\in X_t} 1_{A(x_{[d]}) \perp X(x_{[d]})} \ge q^{-5s}\right] \ge q^{-5s}. 
		\]
		So for a collection of $t\in X_{d+1}^l$ of density $\ge q^{-10s}$ we have both 
		\[
		\mE_{x\in X_t} 1_{A(x_{[d]}) \perp X(x_{[d]})} \ge q^{-5s}
		\]
		and inequality \eqref{false-zero}.
		For these $t,$ we have
		\begin{multline*}
			\mP_{x\in X_t\cap \{\phi_t(x_{[d]})=0\}} (P(x) = 0) \ge \mP_{x\in X_t} (A(x_{[d]}) \perp X(x_{[d]}) | \phi_t(x)=0) \\
			= \frac{\mP_{x\in X_t} (A(x_{[d]}) \perp X(x_{[d]}))}{\mP_{x\in X_t} (A(x_{[d]}) \perp X(x_{[d]}))+\mP_{x\in X_t} (A(x_{[d]}) \not\perp X(x_{[d]}) ,\phi_t(x)=0)} \\
			\ge \frac{q^{-5s}}{q^{-5s}+\mP_{x\in X_t} (A(x_{[d]})\not \perp X(x_{[d]}) ,\phi_t(x)=0)} 
			\ge \frac{q^{-5s}}{q^{-5s}+q^{-25s}} \ge 1-q^{-s}.
		\end{multline*}
	\end{proof}
	
	\section{Regularization}\label{regularization-sec}
	
	In order to improve our approximation from the last section to a genuine low rank representation, we will have to take the $ 100s $ maps $ \phi_t:V^{[d]}\to\mF^{100s} $ and replace them by a tower which is relatively regular together with the maps of $ X_t. $ In this section we describe an efficient algorithm for doing this. The following notion will come in handy:
	
	\begin{definition}
		Let $ \mcR=(\mcR_i)_{i\in[h]}, \mcQ=(\mcQ_j)_{j\in [h]}$ be towers on $ V^{[k]}$ and $ m_i = |\mcR_i|.$ We say that $ \mcR $ is $ (A,B,s) $-regular relative to $ \mcQ $ if for all $ i\in[h] $ we have 
		\[
		rk_{\mcQ\cup\mcR_{<i}} (\mcR_i) > A(m_i+m_{i+1}+\ldots+m_h+s)^B.
		\]
	\end{definition}

	Note that if $\mcQ,\mcR$ are two towers such that $\mcR$ is $(A,B,s)$-regular relative to $ \mcQ $ and $\mcQ$ is $(A,B,s+m)$-regular ($ m $ is the dimension of $ \mcR $), then $\mcQ\cup\mcR$ is $(A,B,s)$-regular with the layers ordered by having $ \mcQ $ on the bottom and $ \mcR $ on top.
	
	\begin{claim}[Regular decomposition]\label{Regularization}
		For any $A,B$ there exist constants $C(A,B,d,k),D(A,B,d,k)$ such that the following holds: If $\mcQ$ is a tower on $ V^{[k]} $ and $\mcR$ is a collection of full multi-affine maps of degree $\le d$  then there exist towers $\mcS_1,\ldots,\mcS_l$ such that:
		\begin{enumerate}
			\item $\mcS_i$ is $(A,B,s)$-regular relative to $ \mcQ $,
			\item $\mcS_i$ is of degree $\le d$ and height $\le 2^k,$
			\item $\dim(\mcS_i)\le C(s+\dim(\mcR))^D$ and
			\item we have a decomposition $Z(\mcQ \cup \mcR) = \bigsqcup_{i=1}^l  Z(\mcQ \cup \mcS_i). $ 
		\end{enumerate}
		In addition, if $ \mcQ,\mcR $ are multi-linear then we can take $ \mcS_1 $ to be multi-linear.
	\end{claim}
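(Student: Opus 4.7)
The plan is to build the $\mcS_i$ as the leaves of a recursive regularization decision tree. I would start by organizing $\mcR$ into a candidate tower $\mcS$ on top of $\mcQ$, grouping maps by their index sets $I\subset [k]$; since there are at most $2^k$ possible index sets, this already yields height $\le 2^k$, and the degree stays $\le d$ throughout because all generators we ever create will have index sets $\subseteq [k]$ of size $\le d$.

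At each stage I would check whether the current $\mcS$ is $(A,B,s)$-regular relative to $\mcQ$. If so, output it as one of the $\mcS_i$. Otherwise let $\mcS_j$ be the topmost failing layer, with index set $I$, and pick $0\ne a\in\mF^{m_j}$ witnessing the failure. By the definition of relative partition rank,
\[
\widetilde{a\cdot\mcS_j}(x) \equiv \sum_{l=1}^{r} \tilde{Q}_l(x_{I_l})\,\tilde{R}_l(x_{I\setminus I_l}) \pmod{I(\tilde{\mcQ}\cup\tilde{\mcS}_{<j})}
\]
with $\emptyset\ne I_l\subsetneq I$ and $r\le A(m_j+\ldots+m_h+s)^B$. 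Lifting to full multi-affine representatives gives $a\cdot\mcS_j(x) = \sum_l Q_l(x_{I_l})\,R_l(x_{I\setminus I_l}) + L(x)$ on $Z(\mcQ\cup\mcS_{<j})$, where $L$ has strictly smaller multi-linear degree than $|I|$ and so, after splitting into its multi-affine pieces, lives on strictly smaller index sets.

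I would then branch on the $\mF$-values of the scalars $Q_1,R_1,\ldots,Q_r,R_r,L$: each branch is determined by $2r+1$ new equations $Q_l=c_l$, $R_l=c'_l$, $L=c''$, all of strictly smaller index set than $I$. Branches whose parameters violate $\sum_l c_l c'_l + c'' = 0$ are discarded; on the surviving branches, $a\cdot\mcS_j = 0$ is forced, so after a change of basis I strike that generator from $\mcS_j$ and place the new maps into strictly lower layers according to their index sets. Recursing on every branch yields the decision tree whose leaves are the output $\mcS_i$; disjointness of the $Z(\mcQ\cup\mcS_i)$ follows from the mutual exclusivity of the branch conditions in $\mF^{2r+1}$. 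Termination is by lex induction on $(m_h,\ldots,m_1)$, since each step strictly decreases $m_j$ while modifying only coordinates $m_{j'}$ with $j'<j$. In the multi-linear case the decomposition is already exact at the multi-linear level, so no scalar branching is needed---one simply appends the multi-linear factors to the appropriate lower layers, producing the single multi-linear tower $\mcS_1$.

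For the dimension bound, each regularization step adds at most $2r+1\le 3A(\dim(\mcS_{\ge j})+s)^B$ new maps, all at layers strictly below $j$. Processing top-down means that by the time we come to regularize a layer $j'$, its generator count has been inflated from its initial value only by the polynomially many insertions contributed from above, and once a layer has been declared regular no later step touches it again. Unfolding this across the $\le 2^k$ layers gives an iterated polynomial recurrence that closes to the claimed $\dim(\mcS_i)\le C(s+\dim(\mcR))^D$ with $C,D$ depending only on $A,B,d,k$. The main obstacle I expect is the bookkeeping needed to verify that this recurrence does close polynomially rather than exponentially; the crucial point is that the polynomial $A(\cdot+s)^B$ is invoked only $O(2^k)$ many times in sequence, compounding the exponents but keeping them bounded in terms of $A,B,d,k$.
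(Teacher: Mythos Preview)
Your approach is the same recursive regularization as in the paper, but two points in your write-up are not correct as stated.

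First, the claim that ``once a layer has been declared regular no later step touches it again'' fails: regularizing a lower layer $j'<j$ replaces one map in $\mcS_{j'}$ by new maps $q_l,r_l$ which, together with $\mcQ\cup\mcS_{<j'}$, generate the removed map, so the collection $\mcQ\cup\mcS_{<j}$ can only get \emph{richer} and hence $prk_{\mcQ\cup\mcS_{<j}}(\mcS_j)$ can drop below the threshold again. The paper avoids this by fixing, before the algorithm starts, numbers $n_i$ via the recursion
\[
n_{2^k}=m_{2^k},\qquad n_i = m_i + n_{i+1}\bigl[2A(s+n_{i+1})^B+2^d\bigr],
\]
and testing each layer against the stronger threshold $A(n_i+s)^B$ throughout. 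One then shows by downward induction that the total number of maps ever appearing at levels $\ge i$ is $\le n_i$; at termination $m_i+\ldots+m_h\le n_i$ and $(A,B,s)$-regularity follows. Your lex termination argument is fine, and a similar ``total-ever-at-level'' count would rescue your dimension bound, but the argument you actually wrote does not establish it.

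Second, your multi-linear remark is off. Even when $\mcQ,\mcR$ are multi-linear, the identity $a\cdot\mcR_j=\sum_l q_l r_l$ on $Z(\mcQ\cup\mcR')$ does not force $q_l=r_l=0$; it only forces $\sum_l q_l r_l=0$, which has many non-zero solutions. Branching is therefore still required; the point is only that the branch with all new values equal to $0$ is always present and yields a multi-linear $\mcS_1$. Also, your ``$2r+1$'' should be $2r+O(2^d)$: as you yourself note, the remainder $L$ must be decomposed into its multi-linear pieces over all $J\subsetneq I$ before being inserted into the tower.
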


    Note that Theorem \ref{polyreg} is the above claim for homogeneous polynomials in the simplest case where $\mcQ$ is empty, and the proof we will now give of Claim \ref{Regularization} may be easily modified to prove Theorem \ref{polyreg}.
    
	\begin{proof}
		First, arrange $ \mcR $ in a tower $ \mcR = (\mcR_i:V^{I_i}\to\mF)_{i\in[2^k]} $, with the higher degree maps above the lower degree ones. As usual, denote $ m_i = |\mcR_i|. $  
		To the $i$-th layer we assign a number $n_i,$ according to a formula we will give later. We describe an algorithm for producing $ \mcS_i $ and then show that it has the desired properties. If for every $ i $
		\[  
		rk_{\mcQ\cup\mcR_{<i}} (\mcR_i) > A(n_i+s)^B,
		\]
		then we stop. Otherwise, there is some $0\neq a\in\mF^{m_i} $ such that 
		\[
		a\cdot \mcR_i = \sum_{j=1}^t q_j(x_{I_j}) r_j (x_{I_i\setminus I_j}) + \sum_{J \subsetneq I_i} u_J(x_J) + \sum_{f\in \mcQ\cup\mcR_{<i},\ I_f\subset I_i} f(x_{I_f})h_f(x_{I_i\setminus I_f}),
		\]
		where $t \le A(n_i+s)^B.$ Assume without loss of generality that $ a_{m_i} \neq 0. $ Let $ \mcR' $ be the tower obtained from $ \mcR $ by deleting $ R_{i,m_i}. $ In $Z(\mcQ\cup\mcR'),$ the value of $R_{i,m_i}$ is determined by the $q_j,r_j,u_J.$ We add these maps to the lower layers of $\mcR'$ and call the new tower $\mcR''.$  We can then split
		\[
		 Z(\mcQ\cup\mcR) = \bigsqcup_{i=1}^c \left( Z(\mcQ) \cap \{\mcR''=b_i\} \right) ,
		 \]
		where $ b_1,\ldots,b_c \in\mF^{\mcR''}$ are the values causing the maps in $ \mcR $ to vanish on\\
		$ Z(\mcQ) \cap \{\mcR''=b_i\}. $ Now we do the same thing for each of the towers $ \mcR''-b_i $ (with the same $n_i$ we chose at the start) and so on. Because at each stage we replace a map by maps of lower degree (and when handling a linear map we just delete it), this algorithm eventually terminates. We now give the definition of $n_i$ and check that the $ \mcS_i $ we end up with satisfy requirements 1-4. Set
		\[
		n_{2^k} = m_{2^k} ,\ n_i =  m_i + n_{i+1} \left[ 2A(s+n_{i+1})^B+2^d \right] .
		\]
		We prove inductively that the total number of polynomials passing through layer $i$ and above during this process  is $\le n_i.$ For $i = 2^k$ this is clear since during regularization we only add polynomials to lower layers. Now suppose this bound holds for $i.$ Any polynomials added to layer $i-1$ come from regularizing the higher layers. Each polynomial upstairs can contribute at most $2A(s+n_i)^B+2^d$ when regularizing, so the bound at layer $i-1$ is proved, assuming the bound at layer $i.$ Therefore, conditions 1 and 3 are satisfied. Conditions 2 and 4 clearly hold.  If $ \mcQ,\mcR $ are multi-linear then we can take $ b_1 = 0 $ at every stage, leaving us with $ \mcS_1 $ multi-linear. 
	\end{proof}
	
	\section{Fixing the approximation}\label{fixing}
	
	In this section, we will take the many approximations that we get from Proposition \ref{approx} and "glue" them together to get a single tower $ \mcR $ such that $ P $ vanishes identically on $ Z (\mcQ\cup\mcR). $ A central tool will be the following proposition (cf. proposition 5.1 in \cite{gt}):
	
	\begin{proposition}[Almost surely vanishing implies vanishing]\label{99-to-100}
		There exist constants $A(d,H),B(d,H),s(d)$ such that if $ \mcQ $ is an $(A,B,s)$-regular tower on $ V^{[d+1]} $ of degree $\le d$ and height $\le H$ and $f:V^{[d+1]}\to\mF$ is a polynomial map (not even necessarily multi-affine) of degree $ \le d+1 $ satisfying
		\[
		 \mP_{x\in Z(\mcQ)} \left(f(x) = 0\right) \ge 1-q^{-s},
		 \]
		then $f(x) = 0$ for all $x\in Z(\mcQ).$
	\end{proposition}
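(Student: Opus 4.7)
The plan is to Fourier-transform the vanishing hypothesis into a large-bias statement, apply the multi-affine bias-rank theorem (available at degree $\le d+1$ via the reductions of Section \ref{reductions}), and then iterate to upgrade ``low relative rank'' to ``identically zero modulo $\mcQ$''.

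First, by Fourier inversion on $\mF$,
$$\mE_{x \in Z(\mcQ)} \chi(af(x)) = \mP(f=0) + \sum_{b \neq 0} \mP(f=b)\chi(ab),$$
so the hypothesis $\mP_{Z(\mcQ)}(f=0) \ge 1 - q^{-s}$ forces $|\mE_{Z(\mcQ)} \chi(af)| \ge 1 - 2q^{-s}$ for every nonzero $a \in \mF$, i.e.\ every nonzero scalar multiple of $f$ has essentially maximal relative bias on $Z(\mcQ)$.

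Second, because $f$ is a general polynomial rather than a multi-affine map, I would first multilinearize its top-degree part via iterated discrete derivatives $\Delta_{h_1}\cdots\Delta_{h_{d+1}} f$ evaluated on the product tower $\mcQ^{\otimes\mathbf{l}}$ from Claim~\ref{mult-coord}. By that claim the derived tower remains $(A,B,s)$-regular, and Lemma~\ref{Fubini} transfers the near-maximal bias of $af$ to a large bias of the resulting multilinear form on the derived tower. Applying Theorem~\ref{Bias-Rank} at degree $d+1$ then yields $rk_\mcQ(af) \le A(1+s)^B$, equivalently $af \equiv g_a \pmod{I(\mcQ)}$ for some $g_a$ of bounded Schmidt rank, say $g_a = \sum_{j=1}^r P_j R_j$ with $\deg P_j, \deg R_j \le d$ and $r \le A(1+s)^B$.

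Third, I would close by induction on the Schmidt rank $r$ of $g_a$. The base case $r=0$ forces $g_a \equiv 0$, hence $f \in I(\mcQ)$ and the claim holds. For the inductive step, adjoin the factors $P_1, R_1$ to $\mcQ$ and regularize using Claim~\ref{Regularization}; this produces a disjoint decomposition $Z(\mcQ) = \bigsqcup_i Z(\mcQ \cup \mcS_i)$ with each $\mcQ \cup \mcS_i$ regular and of controlled additional dimension. On each piece, either $P_1$ vanishes identically (so $g_a$ has rank $\le r-1$) or $P_1$ takes a fixed nonzero value $c$ (so $g_a - cR_1$ has rank $\le r-1$); in either case $f$ still $q^{-s}$-almost vanishes on $Z(\mcQ \cup \mcS_i)$, the inductive hypothesis delivers $f \equiv 0$ on each piece, and aggregating yields $f \equiv 0$ on $Z(\mcQ)$.

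The main obstacle is maintaining regularity through the iterated adjoining/regularization of Step~3: each step can enlarge the ambient tower, and one must choose the constants $A,B,s(d)$ large enough that the regularity hypothesis survives the $O(r)$ steps of the induction, where $r$ is itself polynomial in $s$; tracking these dependencies through Claim~\ref{Regularization} and Lemma~\ref{Atom-size} requires care but no new idea. A secondary technical difficulty is the multilinearization in Step~2 when $\mathrm{char}(\mF) = d+1$, where polarization fails; the product-tower trick from Claim~\ref{mult-coord} circumvents this by extracting the multilinear form through discrete derivatives on a regular derived tower.
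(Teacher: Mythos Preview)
Your Step~2 is circular. At this point in the paper's induction, Theorem~\ref{Bias-Rank} is only available for degree $\le d$; Proposition~\ref{99-to-100} is one of the ingredients used \emph{to prove} Theorem~\ref{Bias-Rank} at degree $d+1$ (it is invoked in Claim~\ref{gluing}, Proposition~\ref{vanishing}, and Theorem~\ref{Nullstellensatz}, which together complete that inductive step). The reductions of Section~\ref{reductions} do not give you the degree-$(d+1)$ theorem for free; they reduce its proof to the multilinear case with a tower of degree $\le d$, which is precisely what the subsequent sections---including this proposition---establish. So you cannot invoke it here.

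There are two secondary problems. First, even granting $rk_\mcQ(f)\le r$, this constrains only the top-degree homogeneous part $\tilde f$; the base case $r=0$ of your Step-3 induction yields $\tilde f\in I(\tilde\mcQ)$, not $f\in I(\mcQ)$, so a lower-degree remainder survives (fixable by an outer induction on $\deg f$, but not as written). Second, the hypothesis ``$f$ still $q^{-s}$-almost vanishes on $Z(\mcQ\cup\mcS_i)$'' in Step~3 need not hold: the bad set of density $q^{-s}$ in $Z(\mcQ)$ can concentrate entirely on one piece of relative density $q^{-\dim\mcS_i}$, giving it density $q^{\dim\mcS_i-s}$ there; since $\dim\mcS_i$ is itself polynomial in $s$, the loss is fatal.

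The paper's argument sidesteps all of this. Since $\deg f\le d+1$, its $(d+2)$-fold discrete derivative vanishes identically:
\[
\sum_{\omega\in\{0,1\}^{d+2}}(-1)^{|\omega|}f(x+\omega\cdot t)=0.
\]
For any fixed $x\in Z(\mcQ)$, Lemma~\ref{pped-rank} shows the variety of $(d+2)$-parallelepipeds in $Z(\mcQ)$ based at $x$ is itself regular, so by a Fubini-type count almost every such parallelepiped has all $2^{d+2}-1$ non-base vertices in $\{f=0\}$. The alternating-sum identity then forces $f(x)=0$. This uses only the degree-$d$ equidistribution lemmas of Section~\ref{consequences} and avoids the circularity entirely.
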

	
	Let $ \mcQ=(\mcQ_i)_{i\in[h]} $ where $ \deg(\mcQ_i) = d_i $ and write $ X = Z(\mcQ) $ for short. In order to prove this lemma we will need to count parallelepipeds based at any point $ x\in X. $ Given $x\in X$ and an integer $ l, $ for each layer $ i\in [h] $ we define a layer  $ (\mcQ_{x,l})_i $ of maps $ (V_1)^l\times\ldots\times (V_{d+1})^l\to\mF $ by
	\[
		(\mcQ_{x,l})_i := \{\mcQ_i(x+\omega\cdot t): \omega\in\{0,1\}^l,0<|\omega|\le d_i\},   
	\]
	and $ \mcQ_{x,l} $ is the resulting tower. Writing $ X_{x,l} = Z(\mcQ_{x,l}), $ note that 
	\[
	 X_{x,l} := \{t_1,\ldots,t_l\in V^{[d+1]} : x+\omega\cdot t\in X\ \forall \omega\in\{0,1\}^l\}.
	 \]
	We will also need to count parallelepipeds containing an additional point of the form $ x+t_1. $ Given $ t_1\in X-x $ and $ c\in[l], $ set 
	\[
	X_{x,l,c}(t_1) = \{t_2,\ldots,t_l\in V^{[d+1]} : (t_1-\ldots-t_c,t_2,\ldots,t_l)\in X_{x,l}\}.
	\]
	\begin{lemma} \label{pped-rank}
		If $\mcQ$ is $(Cl^{(d+1)D},D,s)$-regular then $\mcQ_{x,l}$ is an $(C,D,s)$-regular tower of the same degree and height. Also, if $ C,D $ are sufficiently large depending on $ A,B,d,H,l $ then for $ q^{-s} $ a.e. $ t_1\in X-x $ we have $ X_{x,l,c}(t_1) = Z(\mcR) $ where $ \mcR $ is an $ (A,B,s) $-regular tower of the same degree as $ \mcQ $, height $ \le Hl^{d+1} $ and 
		\[  
		 \dim(\mcR)= \dim(\mcQ_{x,l}) -\dim(\mcQ).
		\] 
	\end{lemma}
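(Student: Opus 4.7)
The plan is to prove both assertions using Claim~\ref{mult-coord} as the main tool, after a careful analysis of the multi-linear structure of the maps involved.

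For the regularity of $\mcQ_{x,l}$, I note that the multi-linear part of the generator $\mcQ_i(x+\omega\cdot t)$ equals
\[\tilde{\mcQ}_i(\omega\cdot t)=\sum_{(a_j)\in[l]^{I_i}}\Bigl(\prod_j\omega_{a_j}\Bigr)\,\mcQ_i\bigl((t^{(a_j)}_j)_{j\in I_i}\bigr),\]
which is a specific linear combination of the multi-linear generators of layer $i$ of $\mcQ^{\otimes(l,\ldots,l)}$. Because $\omega\mapsto\mcQ_i(x+\omega\cdot t)$ is multi-affine of degree $\le d_i$ in the Boolean variables, its values on $\{\omega:0<|\omega|\le d_i\}$ determine the function, and a M\"obius inversion shows that the corresponding multi-linear parts span, modulo the lower layers, the same subspace as layer $i$ of $\mcQ^{\otimes(l,\ldots,l)}$. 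Hence the relative ranks coincide layer by layer, and Claim~\ref{mult-coord} applied with $l_1=\cdots=l_{d+1}=l$ yields $(C,D,s)$-regularity of $\mcQ_{x,l}$ from the hypothesized $(Cl^{(d+1)D},D,s)$-regularity of $\mcQ$. Degrees and heights are preserved by construction.

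For the description of $X_{x,l,c}(t_1)$, set $F_{i,\omega}(t_2,\ldots,t_l):=\mcQ_i\bigl(x+\omega\cdot(t_1-\sum_{a=2}^c t_a,t_2,\ldots,t_l)\bigr)$. The distinguished vector $\omega^*\in\{0,1\}^l$ with $\omega^*_a=1$ for $a\in[c]$ and $\omega^*_a=0$ otherwise satisfies
\[\omega^*\cdot\bigl(t_1-\sum_{a=2}^c t_a,\,t_2,\ldots,t_l\bigr)=\Bigl(t_1-\sum_{a=2}^c t_a\Bigr)+\sum_{a=2}^c t_a=t_1,\]
so $F_{i,\omega^*}\equiv\mcQ_i(y)=0$ whenever $y=x+t_1\in X$. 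Define $\mcR$ to be the collection of all the $F_{i,\omega}$ with $\omega\neq\omega^*$. Then $Z(\mcR)=X_{x,l,c}(t_1)$ for every $t_1\in X-x$, exactly one map is dropped per original map of $\mcQ$, and the dimension, degree, and height bounds follow immediately.

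For the regularity of $\mcR$ I would repeat the argument of the first paragraph on the variables $(t_2,\ldots,t_l)$. Setting $\epsilon_a=\omega_a-\omega_1$ for $2\le a\le c$ and $\epsilon_a=\omega_a$ otherwise, a direct expansion shows that the multi-linear part
\[\tilde F_{i,\omega}=\sum_{(a_j)\in\{2,\ldots,l\}^{I_i}}\Bigl(\prod_j\epsilon_{a_j}\Bigr)\,\mcQ_i\bigl((t^{(a_j)}_j)_{j\in I_i}\bigr)\]
is independent of $t_1$; this is the reason the regularity in fact holds for every $t_1\in X-x$, hence a fortiori for $q^{-s}$-a.e. $t_1$. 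A parallel M\"obius/interpolation argument shows that the span of these multi-linear parts matches, layer by layer, that of $\mcQ^{\otimes(l-1,\ldots,l-1)}$ on the variables $(t_2,\ldots,t_l)$, and Claim~\ref{mult-coord} finishes the argument provided $C,D$ are chosen large enough in terms of $A,B,d,H,l$. The main obstacle is the M\"obius/interpolation step used in both regularity arguments: one must verify that the evaluations $\tilde{\mcQ}_i(\omega\cdot t)$, respectively $\tilde F_{i,\omega}$, taken over the admissible range of $\omega$, generate layer by layer the same subspaces (modulo lower layers) as the natural product generators of the $\mcQ^{\otimes}$ towers; careful sign bookkeeping in the $\omega_1=1$ regime, combined with the degree-$\le d_i$ bound in the Boolean variables and the fact that $\omega^*$ is the unique combination whose associated $\epsilon$-vector is identically zero, is exactly what makes this work.
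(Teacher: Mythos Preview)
Your argument for the first assertion contains a genuine error: the claim that the multilinear parts $\tilde\mcQ_i(\omega\cdot t)$ for $0<|\omega|\le d_i$ span the \emph{same} subspace as layer $i$ of $\mcQ^{\otimes(l,\ldots,l)}$ is false. A dimension count already rules it out: layer $i$ of $\mcQ^{\otimes(l,\ldots,l)}$ has $l^{d_i}$ maps per map of $\mcQ_i$, while you have only $\sum_{j=1}^{d_i}\binom{l}{j}$ maps, strictly fewer once $l,d_i\ge 2$. Concretely, for $d_i=l=2$ the three maps $Q(t_1),Q(t_2),Q(t_1+t_2)$ recover only the symmetric combination $Q(t_1(1),t_2(2))+Q(t_2(1),t_1(2))$, not the two mixed generators separately. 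No M\"obius inversion can bridge this gap.

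What the paper proves, and what actually suffices, is the weaker assertion that the maps $\tilde\mcQ_i(\omega\cdot t)$ are linearly \emph{independent}. This is done by a lexicographic argument: given a nontrivial relation $\sum_\omega a_\omega Q(\omega\cdot t)=0$, take the lex-largest $\omega_0$ with $a_{\omega_0}\neq 0$, choose $i_1,\ldots,i_{d_i}$ in the support of $\omega_0$ not all contained in the support of any other contributing $\omega$, and read off the coefficient of $Q(t_{i_1}(1),\ldots,t_{i_{d_i}}(d_i))$. Linear independence is enough because it forces every nonzero combination in $(\mcQ_{x,l})_i$ to be a nonzero combination in $(\mcQ^{\otimes l})_i$; since moreover $(\mcQ_{x,l})_{<i}$ lies in the span of $(\mcQ^{\otimes l})_{<i}$, one gets $prk_{(\mcQ_{x,l})_{<i}}((\mcQ_{x,l})_i)\ge prk_{(\mcQ^{\otimes l})_{<i}}((\mcQ^{\otimes l})_i)$, and the smaller layer sizes of $\mcQ_{x,l}$ then transfer regularity.

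Your treatment of the second assertion inherits the same span error and has a further problem: the claim that ``exactly one map is dropped per original map of $\mcQ$'' fails whenever $c>d_i$ for some layer $i$, since then $|\omega^*|=c>d_i$ and $\omega^*$ was never in the admissible range $0<|\omega|\le d_i$ for that layer to begin with. The paper takes a different and more robust route: the affine substitution $(t_1,\ldots,t_l)\mapsto(t_1-t_2-\cdots-t_c,t_2,\ldots,t_l)$ preserves regularity; one then splits the resulting tower into layers on the $(d+1)l$ individual coordinates (this is what produces the height bound $Hl^{d+1}$) and invokes Lemma~\ref{Derivatives} with $I$ equal to the $d+1$ coordinates of $t_1$. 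This yields regularity of the fibre only for $q^{-s}$-a.e.\ $t_1$, which is exactly what the lemma asserts; your attempt to upgrade this to every $t_1$ is neither needed nor, as stated, justified.
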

	
	\begin{proof}
		As usual, let $ \tilde{\mcQ} $ denote the multi-linear tower corresponding to $ \mcQ. $ The multi-linear tower corresponding to $ \mcQ_{x,l} $ is then $ \tilde{\mcQ}_{0,l}, $ so we can assume without loss of generality that $ \mcQ $ is multi-linear and $ x=0. $
		By Claim \ref{mult-coord}, the tower $ \mcQ^{\otimes (l,\ldots,l)} $ is $(A,B,s)$-regular. The maps in $ \mcQ_{x,l} $ are linear combinations of the maps in $  \mcQ^{\otimes (l,\ldots,l)}, $ so we just need to verify that they are linearly independent. Let $ Q\in\mcQ $ be a map of degree $ e $, assume w.l.o.g. that $ Q:V^{[e]}\to\mF, $ and suppose (to get a contradiction) that we have a non-trivial linear combination
		\begin{equation}\label{linear-independence}
			\sum_{\omega\in\{0,1\}^l,0<|\omega|\le e} a_\omega Q(\omega\cdot t) = 0.
		\end{equation}  
		 Let $ \omega_0 $ be the largest element with respect to the lexicographic ordering on $ \{0,1\}^l $ such that $ a_{\omega_0} \neq 0 $ and choose $ i_1,\ldots,i_e\in\omega_0 $ such that for every other $ \omega $ with $ a_\omega \neq 0 $ we have $ \{i_1,\ldots,i_e\} \not\subset\omega. $ Then the coefficient of $ Q(t_{i_1}(1),\ldots,t_{i_e}(e)) $ in the left hand side of equation \eqref{linear-independence} is $ a_{\omega_0} \neq 0, $ contradiction. Now we turn to the second part of the lemma. Since invertible affine transformations do not affect regularity, the variety 
		 \[ 
		 \{(t_1,\ldots,t_l): (t_1-\ldots-t_c,t_2,\ldots,t_l)\in X_{x,l}\}
		  \]
		  is the zero locus of a $ (C,D,s) $-regular tower on $ (V_1)^l\times\ldots\times (V_{d+1})^l. $ This tower remains regular if we split up the layers into at most $l^{d+1}$ layers so that it is a tower on 
		  \[ 
		  V_1\times\ldots\times V_1\times\ldots\times V_{d+1}\times\ldots\times V_{d+1}.
		   \]
		   Then by Lemma \ref{Derivatives} we get that for $ q^{-s} $-a.e. $ t_1\in X-x $ the variety 
		   \[
		   \{(t_2,\ldots,t_l): (t_1-\ldots-t_c,t_2,\ldots,t_l)\in X_{x,l}\}
		   \]
		   is the zero locus of a tower $ \mcR $ which has the required properties. 
	\end{proof}

	We now use this to prove the proposition from the start of this section.
	
	\begin{proof}[Proof of Proposition \ref{99-to-100}]
		Let $G = \{x\in X : f(x) = 0\},$ and $B = X\setminus G$ be the good and bad points of $ X, $ respectively. Fix $x\in X.$ Since $ f $ is a polynomial of degree $ \le d+1, $ for any $ t_1,\ldots,t_{d+2}\in V^{[d+1]} $ we have
		\[
		\sum_{\omega\in\{0,1\}^{d+2}} (-1)^{|\omega|}f(x+\omega\cdot t) = 0.
		\]
		This means that if we can find $ t\in X_{x,d+2}$ with $ x+\omega\cdot t\in G $ for all $ 0\neq \omega \in \{0,1\}^{d+2} $ then $ x\in G $ as well. Since the previous claim (together with Lemma \ref{Atom-size}) implies in particular that $ X_{x,d+2} \neq \emptyset, $ it is enough to show the following bound for any fixed $0\neq\omega\in\{0,1\}^{d+2}$
		\[
		\mE_{t\in X_{x,d+2} } 1_B (x+\omega\cdot t) \le 2^{-(d+2)}.
		\]
		By reordering the $ t_j $, assume without loss of generality that $\omega = (1^c,0^{d+2-c}).$
		By replacing $ t_1 $ with $ t_1-t_2-\ldots-t_c, $ the left hand side becomes
		\[
		\mE_{t,(t_1-t_2-\ldots-t_c,t_2,\ldots,t_{d+2}) \in X_{x,d+2}} 1_B (x+t_1).
		 \]
		By Lemma \ref{pped-rank}, the conclusion of Lemma \ref{Fubini} holds (the proof is the same), meaning
		\[ 
		\mE_{t,(t_1-t_2-\ldots-t_c,t_2,\ldots,t_{d+2}) \in X_{x,d+2}} 1_B (x+t_1) \le \mE_{t_1\in X-x}1_B (x+t_1)+q^{-s} \le q^{1-s}.
		 \]
		This proves the proposition for $ s \ge 3+d. $ 
 	\end{proof}
	
	We now prove a claim which will allow us to glue the various approximations we get in Lemma \ref{approx} together. 

	\begin{claim} \label{gluing}
		There exist $A(d,H),B(d,H),s(d)$ such that if $ f:V^{[d+1]}\to\mF $ is multi-linear, $\mcQ,\mcG_1, \mcG_2$ are multi-linear towers on $V^{[d+1]}$ satisfying $ f\restriction_{Z(\mcQ\cup\mcG_j)} = 0  $ and $ \mcQ\cup\mcG_1\cup\mcG_2 $ is $ (A,B,s) $-regular of degree $ \le d $ and height $ \le H, $ then for any $ i\in [d+1] $ we have 
		\[ 
		f\restriction_{ \{x\in Z(\mcQ)\ :\ x_{[d+1]
			\setminus \{i\}}  \in Z(\mcG_1\cup\mcG_2)_{[d+1]
			\setminus \{i\}} \} } \equiv 0.
		 \]
	\end{claim}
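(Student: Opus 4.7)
By relabeling we may assume $i = d+1$; set $J = [d]$ and decompose the multi-linear map $f$ as $f(x) = A(x_J)\cdot x_{d+1}$, where $A:V^J\to V_{d+1}^*$ is multi-linear. The first move is to realize the target set as a zero locus: setting
\[
\mcR := \mcQ\cup\{G\in\mcG_1\cup\mcG_2 : d+1\notin I_G\}
\]
with $\mcQ$ below, the condition $I_G\subset J$ is equivalent to $d+1\notin I_G$, so
\[
\{x\in Z(\mcQ):\ x_J\in Z(\mcG_1\cup\mcG_2)_J\}=Z(\mcR).
\]
Removing maps from the upper layers of an $(A,B,s)$-regular tower preserves regularity (the relative-rank requirements on surviving layers only weaken), so $\mcR$ is itself $(A,B,s)$-regular. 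By Proposition \ref{99-to-100} it suffices to show $f$ vanishes on $Z(\mcR)$ with density at least $1-q^{-s(d)}$, and Lemma \ref{Fubini} reduces this further to proving that for $q^{-s(d)}$-a.e.\ $x_J\in Z(\mcR)_J$ we have $f(x_J, y) = 0$ for \emph{every} $y$ in the fiber $U(x_J):=Z(\mcR)(x_J) = Z(\mcQ)(x_J)$, which is a linear subspace of $V_{d+1}$ because $\mcQ$ is multi-linear.

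Fix such $x_J$ and define
\[
V_j(x_J):=\{y\in U(x_J):\ G(x_J,y)=0\ \forall G\in\mcG_j,\ d+1\in I_G\}=Z(\mcQ\cup\mcG_j)(x_J),
\]
a linear subspace of $U(x_J)$. By hypothesis $f\restriction_{Z(\mcQ\cup\mcG_j)}\equiv 0$, so the linear functional $y\mapsto A(x_J)\cdot y$ annihilates $V_j(x_J)$ for $j=1,2$, and by linearity annihilates $V_1(x_J)+V_2(x_J)$. Thus it is enough to show $V_1(x_J)+V_2(x_J)=U(x_J)$ for almost every $x_J$.

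For this, I would apply Lemma \ref{Derivatives} to the regular tower $\mcQ\cup\mcG_1\cup\mcG_2$ with $I=J$: for $q^{-s(d)}$-a.e.\ $x_J\in Z(\mcQ\cup\mcG_1\cup\mcG_2)_J=Z(\mcR)_J$ the restricted tower on $V_{d+1}$ is $(C,D,s)$-regular with $C,D$ as large as we need (by taking $A,B$ large enough at the start). Since layers with index in $J$ restrict to zero and are absorbed into the constraints on $x_J$, the non-trivial restricted layers consist of linear functionals on $V_{d+1}$; as any non-zero linear functional has partition rank $\infty$, $(C,D,s)$-regularity at degree $1$ is equivalent to each surviving layer being linearly independent modulo the layers below. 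Chaining these independences yields that the full family
\[
\{Q(x_J,-):Q\in\mcQ,\,d+1\in I_Q\}\ \cup\ \{G(x_J,-):G\in\mcG_1\cup\mcG_2,\,d+1\in I_G\}
\]
is linearly independent in $V_{d+1}^*$. In particular, the spans of the $\mcG_1$- and $\mcG_2$-functionals meet trivially modulo $U(x_J)^\perp=\mathrm{span}\{Q(x_J,-):Q\in\mcQ,\,d+1\in I_Q\}$, which by standard duality is equivalent to $V_1(x_J)+V_2(x_J)=U(x_J)$.

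The principal obstacle is this last paragraph: one must carefully unpack the content of ``degree-$1$ relative regularity'' for the restricted tower on $V_{d+1}$, keeping track of which original maps survive restriction (those with $d+1$ in their index) versus those that become identically zero after evaluation at $x_J\in Z(\mcR)_J$, and then translate the resulting linear independence into the ``$V_1+V_2=U$'' statement via duality. Once this step is in hand, Fubini together with Proposition \ref{99-to-100} upgrades the a.e.\ vanishing of $f$ on $Z(\mcR)$ to identical vanishing, completing the proof.
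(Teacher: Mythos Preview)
Your proof is correct and follows essentially the same approach as the paper: use Lemma \ref{Derivatives} on $\mcQ\cup\mcG_1\cup\mcG_2$ to deduce that for almost every $x_J$ the restricted linear functionals on $V_{d+1}$ are linearly independent (hence $V_1(x_J)+V_2(x_J)=U(x_J)$), conclude that $f$ vanishes almost everywhere on the target variety, and then invoke Proposition \ref{99-to-100} to upgrade this to identical vanishing. Your write-up is more explicit about identifying the target set as $Z(\mcR)$, about why removing maps preserves regularity, and about the duality step $V_1+V_2=U$, but these are exactly the points the paper's terse proof leaves implicit.
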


	\begin{proof}
		Denote $ Z = Z(\mcQ)\ ,\ Z_j = Z(\mcG_j). $ For any $ x\in (Z\cap Z_1 \cap Z_2)_{[d+1]\setminus \{i\}} $ we have $ f(x,\cdot) \restriction _{(Z\cap Z_j)(x)} = 0 $ for $ j=1,2 $ so by multi-linearity,
		\[ 
		f(x,\cdot) \restriction _{(Z\cap Z_1)(x)+(Z\cap Z_2)(x)} = 0.
		 \]
		 By Lemma \ref{Derivatives}, for $ q^{-2s} $-a.e. such $ x, $ the linear equations appearing in $ \mcQ(x)\cup \mcG_1(x)\cup \mcG_2(x) $ are linearly independent, in which case 
		 \[ 
		 (Z\cap Z_1)(x)+(Z\cap Z_2)(x) = Z(x).
		 \]
		 By Lemma \ref{Fubini},
		  \[ 
		\mP_{ \{x\in Z\ :\ x_{[d+1]
		 		\setminus \{i\}}  \in (Z_1\cap Z_2)_{[d+1]
		 		\setminus \{i\}} \} } (f(x) = 0) \ge 1-q^{-s}.
		  \]  
		 Applying Proposition \ref{99-to-100}, we get that this holds for any $ x $ in the above variety.
	\end{proof}

	\begin{corollary}\label{gluing-many}
		There exist $A(d,H),B(d,H),s(d)$ such that when $ f:V^{[d+1]}\to\mF $ is multi-linear, $\mcQ,\mcG_1,\ldots,\mcG_{2^l}$ are multi-linear towers on $V^{[d+1]}$ such that the $ \mcG_j $ only depend on the first $ l $ coordinates, $ \mcQ\cup\mcG_1\cup\ldots\cup\mcG_{2^l} $ is $ (A,B,s) $-regular of degree $ \le d $ and height $ \le H $, and 
		\[
		f\restriction_{Z(\mcQ)\cap Z(\mcG_j)}\equiv 0\ \forall j\in[2^l]
		\]
		then $ f\restriction_{Z(\mcQ)} \equiv 0.$
	\end{corollary}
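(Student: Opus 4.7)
The proof will proceed by induction on $l$. The base case ($l=0$) is a degenerate check: there is one tower $\mcG_1$ whose layers all have index set $\emptyset$, i.e., a collection of constants; regularity combined with Lemma \ref{Atom-size} forces every such constant to be $0$, so $Z(\mcG_1)=V^{[d+1]}$ and the hypothesis $f\restriction_{Z(\mcQ)\cap Z(\mcG_1)}\equiv 0$ is exactly the conclusion $f\restriction_{Z(\mcQ)}\equiv 0$.

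For the inductive step I would pair the $2^l$ towers by their first index coordinate: for each $\omega\in\{0,1\}^{l-1}$, apply Claim \ref{gluing} with $i=1$ to the triple $(\mcQ,\mcG_{(0,\omega)},\mcG_{(1,\omega)})$. The required regularity of $\mcQ\cup\mcG_{(0,\omega)}\cup\mcG_{(1,\omega)}$ is inherited from that of the full tower: dropping maps from a regular tower can only raise the relative ranks (the minimum is then taken over fewer linear combinations, with fewer lower-layer maps available for reduction) and can only lower the dimension parameter in the defining inequality. Claim \ref{gluing} then yields
\[
f\restriction_{\{x\in Z(\mcQ)\,:\, x_{[d+1]\setminus\{1\}}\in Z(\mcG_{(0,\omega)}\cup\mcG_{(1,\omega)})_{[d+1]\setminus\{1\}}\}}\equiv 0.
\]

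Let $\mcG'_\omega$ denote the sub-tower of $\mcG_{(0,\omega)}\cup\mcG_{(1,\omega)}$ consisting of those layers whose index set does not contain the coordinate $1$. Then $\mcG'_\omega$ is a multilinear tower on $V^{[d+1]}$ depending only on the coordinates $\{2,\ldots,l\}$, and directly from the paper's definition $Z(\mcG)_J = Z(\mcG_J)$ the set in the conclusion of the gluing is exactly $Z(\mcQ\cup\mcG'_\omega)$. Hence $f\restriction_{Z(\mcQ\cup\mcG'_\omega)}\equiv 0$ for each of the $2^{l-1}$ towers $\mcG'_\omega$. The algebraic content behind this step, and the reason the analogous polynomial statement would be harder, is that for a multilinear tower one loses no information on the projection to $V^{[d+1]\setminus\{1\}}$ by discarding the layers that involve $x_1$: any multilinear $g$ with $1\in I_g$ satisfies $g(0,y)=0$, so any $y\in V^{[d+1]\setminus\{1\}}$ satisfying the remaining layers lifts to a point of $Z(\mcG)$ by simply setting $x_1=0$.

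The tower $\mcQ\cup\bigcup_\omega\mcG'_\omega$ is a sub-collection of the original regular tower, and so inherits its regularity with the same constants; after relabeling $\{2,\ldots,l\}\to\{1,\ldots,l-1\}$ the inductive hypothesis at level $l-1$ applies and yields $f\restriction_{Z(\mcQ)}\equiv 0$. The main technical point, and the only real obstacle, is to choose $A(d,H),B(d,H),s(d)$ at least as large as those of Claim \ref{gluing}; since sub-collections inherit regularity verbatim, a single such choice of constants simultaneously handles all of the at most $d+1$ levels of the induction.
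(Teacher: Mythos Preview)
Your proof is correct and follows essentially the same inductive scheme as the paper: pair up the $2^l$ towers, apply Claim \ref{gluing} to each pair to eliminate one coordinate of dependence, then invoke the inductive hypothesis on the resulting $2^{l-1}$ towers. The only cosmetic difference is that you peel off coordinate $1$ at each step (and then relabel), whereas the paper peels off the top coordinate so that the remaining towers sit on $[l]$ without relabeling; you also spell out the regularity-inheritance for sub-towers and the base case more carefully than the paper's ``this is clear.''
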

	
	\begin{proof}
		
	By induction on $ l. $ For $ l = 0 $ this is clear. Suppose it holds up to some $ l $ and we are given $ \mcG_1,\ldots,\mcG_{2^{l+1}} $ depending on the first $ l+1 $ coordinates. Applying Claim \ref{gluing} to each pair $ \mcG_i,\mcG_{i+1} $ we get that 
	\[
	f\restriction_{ \{x\in Z(\mcQ)\ :\ x_{[d+1]
			\setminus \{l\}}  \in Z(\mcG_i\cup\mcG_{i+1})_{[d+1]
			\setminus \{l\}} \} } \equiv 0.
	  \]
	 Then apply the inductive hypothesis to 
	 \[
	  \mcQ, (\mcG_1\cup \mcG_2) _ {[l]},\ldots, (\mcG_{2^{l+1}-1}\cup \mcG_{2^{l+1}}) _ {[l]}.
	  \]		

	\end{proof}

	We summarize our results thus far with the following proposition, which brings us very close to proving Theorem \ref{Bias-Rank}.

	\begin{proposition}\label{vanishing}
		There exist constants $C(A,B,d,H),D(A,B,d,H)$ such that the following holds: If $ P:V^{[d+1]}\to\mF $  is multi-linear, $\mcQ$ is a multi-linear tower of degree $\le d$ and height $\le H$  such that
		\begin{enumerate}
			\item $bias_{Z(\mcQ)} (P) \ge q^{-s}$ and
			\item $\mcQ$ is $(C,D,s)$-regular,
		\end{enumerate}
		then there exists a multi-linear tower $ \mcR $ of degree $ \le d $ and height $ \le 2^{d+1} $ satisfying 
		\begin{enumerate}
			\item $ \dim(\mcR) \le C(1+s)^D, $
			\item $ \mcR\cup\mcQ $ is $ (A,B,s) $-regular and
			\item $ P\restriction_{Z(\mcR\cup\mcQ)} \equiv 0. $ 
		\end{enumerate}
	\end{proposition}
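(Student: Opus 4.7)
My plan is to run Proposition~\ref{approx}, Claim~\ref{Regularization}, Proposition~\ref{99-to-100}, and Corollary~\ref{gluing-many} in sequence, using the last to ``sum'' many regular approximations into a single bounded-dimension tower.

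First I would apply Proposition~\ref{approx} to obtain a set $E\subset X_{d+1}^{100s}$ of density $\ge q^{-10s}$ and, for each $t\in E$, a multi-linear map $\phi_t:V^{[d]}\to \mF^{100s}$ such that $P$ vanishes $q^{-s}$-almost everywhere on $X_t\cap \{\phi_t(x_{[d]})=0\}$. The key observation is that the additional equations cutting out $X_t\cap\{\phi_t=0\}$ beyond $\mcQ$ are the specializations $Q(x_{I_Q\cap[d]},(t_i)_{d+1})$ for $Q\in\mcQ$ with $d+1\in I_Q$ and $i\in[100s]$, together with the components of $\phi_t$; crucially, all of these depend only on $x_{[d]}$. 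Feed this collection into Claim~\ref{Regularization} with base tower $\mcQ$ to obtain multi-linear towers $\mcS_{t,1},\ldots,\mcS_{t,r_t}$ on $V^{[d+1]}$ (with equations on index sets $\subset[d]$) that are $(A',B',s)$-regular relative to $\mcQ$, of bounded height $\le 2^d$ and degree $\le d$. A pigeonhole on the decomposition identifies a principal piece $\mcS_t$ on which $P$ still vanishes $q^{-s}$-a.e.; combined with Lemma~\ref{Atom-size}, Proposition~\ref{99-to-100} upgrades this to $P\equiv 0$ on $Z(\mcQ\cup\mcS_t)$.

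Next comes the summing step. After a further pigeonhole on $t\in E$ to fix the combinatorial shape (heights, index sets, layer dimensions) of $\mcS_t$, I would greedily choose $t^{(1)},\ldots,t^{(2^d)}$ so that $\mcQ\cup\bigcup_j\mcS_{t^{(j)}}$ remains $(A,B,s)$-regular. Since each $\mcS_{t^{(j)}}$ depends only on the first $d$ coordinates, Corollary~\ref{gluing-many} with $l=d$ then yields $P\restriction_{Z(\mcQ)}\equiv 0$, in which case $\mcR=\emp$ trivially satisfies the conclusion.

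The main obstacle is controlling $\dim(\mcR)$ independently of $\dim(\mcQ)$: each individual $\mcS_t$ may have dimension polynomial in $s\cdot\dim(\mcQ)$, since the input to Claim~\ref{Regularization} has that size, so the naive union $\bigcup_j \mcS_{t^{(j)}}$ is far too large to take as $\mcR$. The resolution is that a completed greedy selection forces $\mcR=\emp$, of dimension $0$. If the greedy process instead stalls at some stage $j<2^d$---because every remaining $t$ in the dense subset produces a low-rank relation between $\mcS_t$ and the current tower---that relation identifies a small collection of new polynomials of total dimension $O(s)$, bounded via Theorem~\ref{Bias-Rank}(d,H) applied inductively at lower height, which can be extracted and added to $\mcR$, leaving a strictly smaller problem. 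Iterating this obstruction-extraction is controlled by the fixed height $\le 2^d$ and the finite number of possible tower shapes, yielding the bound $\dim(\mcR)\le C(1+s)^D$ and completing the construction.
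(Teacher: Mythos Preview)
Your overall architecture is right, but there is a genuine gap at exactly the point you flag as ``the main obstacle,'' and your proposed obstruction--extraction fix does not close it.

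The issue is that you feed \emph{all} the extra equations cutting out $X_t\cap\{\phi_t=0\}$---both the specializations $\mcQ(t_i)$ and the components of $\phi_t$---into Claim~\ref{Regularization} relative to $\mcQ$. The input then has size of order $100s\cdot\dim(\mcQ)$, so each $\mcS_t$ has dimension polynomial in $s\cdot\dim(\mcQ)$, not in $s$ alone. Your greedy selection and the subsequent ``stall $\Rightarrow$ extract $O(s)$ polynomials $\Rightarrow$ strictly smaller problem'' loop is not a proof: you have not specified what quantity is strictly decreasing, why it is bounded independently of $\dim(\mcQ)$, or why the extracted polynomials suffice to make progress. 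As written, the iteration could run for a number of steps depending on $\dim(\mcQ)$, and the final $\mcR$ would inherit that dependence.

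The missing idea is that the specializations $\mcQ(t_i)$ never need to be regularized at all. By Lemma~\ref{Derivatives} (together with Claim~\ref{mult-coord}), for $q^{-s'}$-almost every tuple $t\in (X_{d+1}^{100s})^{2^{d+1}}$ the tower defining $X_t=\bigcap_i X_{t_i}$ is already as regular as you like; since $E^{2^{d+1}}$ has density $\ge q^{-2^{d+1}\cdot 10s}$, you can choose $t$ inside $E^{2^{d+1}}$ with this property. You then apply Claim~\ref{Regularization} \emph{only to the $100s$ maps $\phi_{t_i}$}, relative to the (automatically regular) tower for $X_{t_i}$, obtaining $Y_i$ of dimension $\le F(1+s)^G$ with $P\restriction_{X_{t_i}\cap Y_i}\equiv 0$ after Proposition~\ref{99-to-100}. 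Set $Y=\bigcap_i Y_i$, regularize $Y$ (which still has dimension polynomial in $s$) relative to $X_t$, and apply Corollary~\ref{gluing-many} with base tower $\mcQ\cup Y$ and $\mcG_j$ the derivative equations for $X_{t_j}$. The conclusion is $P\restriction_{X\cap Y}\equiv 0$, and $\mcR$ is the tower for $Y$---of dimension $\le C(1+s)^D$ with no dependence on $\dim(\mcQ)$. The point is that the roles of the two kinds of extra equations are different: the $\mcQ$-specializations are many but inherit regularity for free, while the $\phi_t$ are few and are the only ones that require regularization.
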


	\begin{proof}
		Denote $ X = Z(\mcQ) $ and let $E$ be the set from Proposition \ref{approx}. Given $ t\in E^{2^{d+1}}, $ let $ Y_i = \{x\in V^{[d+1]}: \phi_{t_i}(x_{[d]}) = 0 \} $ - so $P$ vanishes $q^{-s}$-a.e. on $X_{t_i}\cap Y_i.$ Applying Lemma \ref{Regularization}, there are towers $ \mcR_{i,1},\ldots, \mcR_{i,l}$ which are $ (A,B,s) $-regular relative to the tower defining $ X_{t_i}, $ satisfy $ \dim(\mcR_{i,j}) \le F(1+s)^G $ for some constants $ F(A,B,d),G(A,B,d) $ and such that
		\[ 
		X_{t_i}\cap Y_i = \bigsqcup_{j=1}^l (X_{t_i}\cap Y_{i,j}),
		 \]
		where $ Y_{i,j} = Z(\mcR_{i,j}). $ By averaging, there is some $ j\in [l] $ such that $P$ vanishes $q^{-s}$-a.e. on $X_{t_i}\cap Y_{i,j}$ - replace $ Y_i $ by this $ Y_{i,j}. $  Now suppose that $ X_{t_i} $ is $ (A,B,s+F(1+s)^G) $-regular (we will soon explain why we can choose $ t $ such that this happens) and note that $ X_{t_i}\cap Y_i $ is $ (A,B,s) $-regular. Applying Lemma \ref{affine-to-linear}, we can replace $ Y_i $ by $ \tilde{Y_i} $ and still have $ \mP_{x\in X_{t_i}\cap Y_i} \ge 1-q^{1-s}. $ By Lemma \ref{99-to-100},\\ $ P\restriction _{X_{t_i}\cap Y_i} \equiv 0. $ Setting $ Y= Y_1\cap\ldots\cap Y_{2^{d+1}}$ we have $ P\restriction_{X_{t_i}\cap Y} \equiv 0 $ for all $ i\in[2^{d+1}]. $ After applying Lemma \ref{Regularization} to $ Y $ relative to $ X_t := X_{t_1}\cap\ldots\cap X_{t_{2^{d+1}}} $  and assuming $ X_t $ is $ (A,B,s+F(2^{d+1} F(1+s)^G)^G) $-regular we get that $ X_t\cap Y $ is $ (A,B,s) $-regular. By Corollary \ref{gluing-many}, we conclude that $ P\restriction_{X\cap Y} \equiv 0 $ as desired.
		
		Now we need to justify our assumptions on the regularity of $ X_t. $ Writing $ n = 100s,$ Claims \ref{Derivatives} and \ref{mult-coord} imply that if $ \mcQ $ is $ (C,D,s) $-regular for sufficiently large $ C(A,B,d,H),D(A,B,d,H), $ then for $ q^{-2^{d+2} n} $-a.e. $ t\in \left( X_{d+1}^n \right)^{2^{d+1}} $ the variety $ X_t $ is $ (A,B,s+F(2^{d+1} F(1+s)^G)^G) $-regular as desired (in which case $ X_{t_i}, $ which is only defined by some of the equations, also must have the regularity we wanted). By Lemma \ref{approx}, $ E $ has density at least $ q^{-n} $ in $ X_{d+1} ^n, $ so $ E^{2^{d+1}} $ has density at least $ q^{-2^{d+1} n} $ in $  \left( X_{d+1}^n \right)^{2^{d+1}} $ which means that there must be some $ t\in E^{2^{d+1}} $ for which $ X_t $ has the desired regularity.
	\end{proof}

	\section{Nullstellensatz}\label{nullstellensatz-sec}
	
	In this section we finish the proof of Theorem \ref{Bias-Rank},  by proving a nullstellensatz for regular towers. This is similar to section 3 of \cite{Mi}, but by restricting our attention to regular towers we can prove a stronger result for them. We start with the following definition. 

	\begin{definition}
		We give $V^{[k]}$ a graph structure where two points $ x,y \in V^{[k]} $ are connected by an edge if there exists some $ i\in [k] $ such that $ x_{[k]\setminus \{i\}} = y_{[k]\setminus \{i\}}, $ i.e. they differ in at most one coordinate.
	\end{definition}

	We will need the following technical lemma.
	
	\begin{lemma}\label{connected}
		There exist constants $A(d,H),B(d,H)$ such that if $ \mcQ $ is an $(A,B,s)$-regular tower on $V^{[d+1]}$  of degree $\le d$ and height $\le H,$ $ X=Z(\mcQ) $ and $ I\subset [d+1] $ then for $ q^{-s} $-a.e. $ x\in X, $ for a.e. $ y\in X\cap X(x_{\ge I}) $ (with $ X(x_{\ge I}) := \cap_{I\subset J} \left( X(x_J)\times V^J \right) ), $ the points $ x_I,y_I $ are connected by a path in $ X( y_{[d+1] \setminus I } ). $
	\end{lemma}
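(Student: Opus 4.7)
The plan is to prove the lemma by induction on $|I|$. The base cases $|I| \le 1$ are immediate: the one-coordinate graph on $V^I$ is then complete, so every non-empty subset of $V^I$ is automatically path-connected. Assume henceforth that $|I| = k \ge 2$.

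First I apply Lemma \ref{Derivatives} with sufficiently large constants to arrange that, for $q^{-2s}$-a.e.\ $y \in X$, the tower $\mcQ(y_{[d+1]\setminus I})$ defining $W_y := X(y_{[d+1]\setminus I}) \subset V^I$ is $(A',B',s')$-regular for any prescribed $(A',B',s')$. After swapping the order of the quantifiers over $x$ and $y$ (using Lemma \ref{Fubini} to check that the marginal distribution of $x_I$ over $\{x \in X : y \in X(x_{\ge I})\}$ is, up to a $q^{-s}$ error, close to the uniform measure on $W_y$), it suffices to prove: for such a generic $y$ and for $q^{-s}$-a.e.\ pair $u,v \in W_y$, the points $u$ and $v$ lie in the same connected component of $W_y$ in the one-coordinate graph. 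Applied with $u = x_I$ and $v = y_I$, this gives the lemma.

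The core is a pivot construction. Enumerate $I = \{i_1,\ldots,i_k\}$ and set $A_t := \{i_1,\ldots,i_t\}$ for $0 \le t \le k$. Given $u,v \in W_y$, I seek a pivot $w \in W_y$ such that for every $t$ the two hybrid points
\[
(u_{I\setminus A_t},\, w_{A_t}) \quad \text{and} \quad (v_{I\setminus A_t},\, w_{A_t})
\]
lie in $W_y$. When such a $w$ exists, the sequence
\[
u = (u_{I\setminus A_0}, w_{A_0}),\ (u_{I\setminus A_1}, w_{A_1}),\ \ldots,\ (u_{I\setminus A_k}, w_{A_k}) = w
\]
is a one-coordinate-at-a-time path in $W_y$ of length $k$ from $u$ to $w$ (consecutive entries differ only in coordinate $i_{t+1}$), and symmetrically from $v$ to $w$, so $u$ and $v$ are connected by at most $2k$ steps.

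Existence of such a $w$ for generic $u,v$ will follow from a counting argument. For each fixed $t$, Lemmas \ref{Atom-size} and \ref{Fubini} applied to the regular tower defining $W_y$ (with Claim \ref{mult-coord} used to handle duplicated coordinates) give, for $q^{-s-4k}$-a.e.\ $u \in W_y$,
\[
\mP_{w \in W_y}\bigl[(u_{I\setminus A_t},\, w_{A_t}) \in W_y\bigr] \ge 1 - q^{-s-4k},
\]
and a union bound over the $2(k+1)$ hybrid conditions produces a valid pivot $w$. The main obstacle is this probability estimate: the defining tower of $W_y$ may contain layers whose index sets cut $A_t$ in complicated ways, so the probability must be computed by splitting the equations according to their dependence pattern on $A_t$ versus $I\setminus A_t$ and applying Lemmas \ref{Atom-size} and \ref{Derivatives} both to the fiber $W_y(u_{I\setminus A_t}) \subset V^{A_t}$ and to the projection of $W_y$ onto $V^{A_t}$, then checking that the ratio of their expected sizes is $1$ up to negligible error. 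All union-bound losses across $t$ and across $(u,v)$ can be absorbed into the $q^{-s}$ tolerance by choosing the constants in the initial application of Lemma \ref{Derivatives} sufficiently large in terms of $d$ and $H$.
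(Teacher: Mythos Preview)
Your proposal has a genuine gap in the central probability estimate. You claim that for $q^{-s-4k}$-a.e.\ $u\in W_y$,
\[
\mP_{w\in W_y}\bigl[(u_{I\setminus A_t},\,w_{A_t})\in W_y\bigr]\ge 1-q^{-s-4k},
\]
and then union-bound over the $2(k+1)$ hybrid conditions. But this probability is not close to $1$; it is typically of order $q^{-m'}$, where $m'$ is the number of equations in the tower defining $W_y$ whose index set meets both $A_t$ and $I\setminus A_t$. Take the simplest case $I=\{1,2\}$, $W_y=\{(a,b):Q(a,b)=0\}$ for a single high-rank bilinear form $Q$, and $A_1=\{1\}$. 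Then the event $(w_1,u_2)\in W_y$ is $Q(w_1,u_2)=0$, which for generic $u_2$ holds for roughly a $q^{-1}$ fraction of $w\in W_y$, not a $1-q^{-s}$ fraction. Your suggested computation---comparing the fiber $W_y(u_{I\setminus A_t})$ with the projection of $W_y$ to $V^{A_t}$---yields exactly this ratio $q^{-1}$, not $1$. So the union bound cannot produce a pivot: valid pivots exist, but they are rare.

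The paper's argument does not try to show that a \emph{random} $w$ works; it shows that the set of valid pivots is a fiber of a regular tower, hence non-empty. Concretely, one bundles all the hybrid constraints (together with the conditions $x\in X$ and $y\in X\cap X(x_{\ge I})$) into a single variety $Y$ in the variables $(x,y,z)$, whose defining equations form a subcollection of those of $\mcQ^{\times 3}$; by Claim~\ref{mult-coord} this is the zero locus of a regular tower. Lemmas~\ref{Derivatives} and~\ref{Atom-size} then give that for $q^{-s}$-a.e.\ $x\in X$ and $q^{-s}$-a.e.\ $y\in X\cap X(x_{\ge I})$, the fiber $Y(x,y)$---the set of valid pivots $z$---is itself the zero locus of a regular tower, and in particular non-empty. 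Your chain-of-hybrids idea (demanding $(u_{I\setminus A_t},w_{A_t})\in W_y$ only for the chain $A_0\subset\cdots\subset A_k$ rather than for all $K\subset I$) is a harmless economy over the paper's version, but to make it work you must replace the false high-probability claim with this regularity-of-the-pivot-variety argument.
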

	
	\begin{proof}
		A sufficient condition for there to be a path in $ X( y_{[d+1] \setminus I } ) $ between $x_I,y_I$ is that there exists $z $ such that for all $ K\subset I $ we have 
		\[
		(x_K , z_{I\setminus K}) , (y_K , z_{I\setminus K}) \in X(y_{[d+1] \setminus I }).
		\]
		Take the following variety
		\[ 
		Y = \left\lbrace 	(x_K , z_{I\setminus K} , y_{[d+1] \setminus I}) , (y_K , z_{I\setminus K} , y_{[d+1] \setminus I}), (x_J,y_{[d+1] \setminus J}) \in X:\ K\subset I\subset J \right\rbrace. 
		 \]
		The equations defining it are partial to those of $ \mcQ^{\times 3}, $ so by Claim \ref{mult-coord} we have $ Y=Z(\mcG) $ fo a  $ (C,D,s) $-regular tower of degree $ \le d $ and height $ \le 3^{d+1} H $ as long as $ \mcQ $ is $ (C(3^{d+1})^D,D,s) $-regular. By Lemmas \ref{Derivatives} and \ref{Fubini}, for $ q^{-s} $-a.e. $ x\in X, $ for $ q^{-s} $-a.e. $ y \in X\cap X(x_{\ge I}), $ the variety $ Y(x,y) $ (which is composed of the $ z'$s we want) is $ (C',D',s) $-regular. In particular, by Lemma \ref{Atom-size} it is non-empty.  
	\end{proof}
	
	Now we are ready to prove:
	
	\begin{theorem}[Nullstellensatz]\label{Nullstellensatz}
		There exist constants $A(d,H),B(d,H),s(d)$ such that if $\mcQ$ is an $(A,B,s)$-regular multi-linear tower on $V^{[d+1]}$ of deg $\le d$ and height $ \le H $ and $P: V^{[d+1]}\to\mF$  is a multi-linear map which vanishes $ q^{-s} $-a.e. on $ Z(\mcQ), $ then $ rk_\mcQ (P) = 0. $ 
	\end{theorem}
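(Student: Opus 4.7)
The plan is to first upgrade the near-vanishing of $P$ to pointwise vanishing on $Z(\mcQ)$, and then extract an explicit multilinear representation of $P$ in the multilinear ideal generated by $\mcQ$. Since $P$ is multilinear of degree $\le d+1$ and $\mcQ$ is $(A,B,s)$-regular of degree $\le d$ and height $\le H$ with $s$ large enough depending on $d$, Proposition \ref{99-to-100} applies directly and yields $P \equiv 0$ on all of $Z(\mcQ)$. What remains is the purely algebraic task of producing multilinear maps $R_Q$ so that $P = \sum_Q Q \cdot R_Q$, which is exactly the statement $rk_\mcQ(P) = 0$.

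I would proceed by induction on the arity $d+1$, with a trivial linear-algebra base case. For the inductive step, I single out the last coordinate: write $P(x) = \langle A(x_{[d]}), x_{d+1}\rangle$ with $A : V^{[d]} \to V_{d+1}^*$ multilinear, and partition $\mcQ = \mcQ^{(d+1)} \sqcup \mcQ^{(\neq d+1)}$ according to whether $d+1 \in I_Q$. For $Q \in \mcQ^{(d+1)}$ I write $Q(x_{I_Q}) = \langle B_Q(x_{I_Q \setminus \{d+1\}}), x_{d+1}\rangle$ with $B_Q : V^{I_Q\setminus\{d+1\}} \to V_{d+1}^*$ multilinear. The hypothesis $P \equiv 0$ on $Z(\mcQ)$ then translates into the pointwise containment
\[
A(x_{[d]}) \in \mathrm{span}\{B_Q(x_{I_Q \setminus \{d+1\}}) : Q \in \mcQ^{(d+1)}\} \subset V_{d+1}^*
\]
for every $x_{[d]} \in Z(\mcQ^{(\neq d+1)})_{[d]}$.

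The core step is to extract multilinear coefficients $c_Q : V^{[d] \setminus (I_Q \setminus \{d+1\})} \to \mF$ realising this containment, i.e., so that $A - \sum_Q c_Q B_Q$ lies in the multilinear ideal of $\mcQ^{(\neq d+1)}$. By Lemma \ref{Derivatives} applied to $\mcQ$, the vectors $B_Q(x_{I_Q \setminus \{d+1\}})$ are generically linearly independent in $V_{d+1}^*$, so pointwise the coefficients are uniquely determined on a large subset of $Z(\mcQ^{(\neq d+1)})_{[d]}$. To upgrade this to a multilinear choice, I would invoke the connectedness lemma (Lemma \ref{connected}): along an edge of the connectivity graph only a single coordinate changes, and the multi-linearity of $A$ and the $B_Q$'s, together with pointwise uniqueness, forces the coefficients to propagate coherently along paths inside $Z(\mcQ)$. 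Once multilinear $c_Q$ are in hand, the residual $A - \sum_Q c_Q B_Q$ vanishes on $Z(\mcQ^{(\neq d+1)})_{[d]}$, and the inductive hypothesis applied coordinate-by-coordinate in $V_{d+1}^*$ shows this residual lies in the multilinear ideal of $\mcQ^{(\neq d+1)}$; substituting back yields the desired representation of $P$.

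The main obstacle is precisely this extraction step: pointwise division by the ``moving'' basis $\{B_Q(x_{I_Q \setminus \{d+1\}})\}$ does not a priori respect multilinearity, and Cramer-style formulas introduce denominators one cannot afford. Making the extraction rigorous requires carefully combining the generic linear independence provided by regularity (via Lemma \ref{Derivatives}) with the path-connectivity of $Z(\mcQ)$ (Lemma \ref{connected}), so that local extractions glue into a single multilinear object. A secondary technical difficulty is that after peeling off coordinate $d+1$ the residual tower on $V^{[d]}$ may still include layers of full arity $d$; these must be handled separately, using their high relative partition rank and the already-established degree-$d$ form of Theorem \ref{Bias-Rank}, before the inductive hypothesis at arity $d$ can be invoked on the remainder.
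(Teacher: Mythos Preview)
Your proposal identifies the right tools (Proposition \ref{99-to-100}, Lemma \ref{Derivatives}, Lemma \ref{connected}) and correctly isolates the heart of the matter: extracting multilinear coefficients from the pointwise containment $A(x_{[d]}) \in \mathrm{span}\{B_Q(x)\}$. However, the extraction step remains a genuine gap. You do not explain how ``propagation along paths'' actually produces a \emph{multilinear} $c_Q$; connectivity tells you that the uniquely determined pointwise coefficients agree along paths inside the generic locus, but it does not by itself give a multilinear extension to all of $V^{[d]}$, and there is no Cramer-free mechanism in your sketch for producing one. The secondary difficulty you flag (full-arity layers surviving on $V^{[d]}$) is also not resolved---invoking Theorem \ref{Bias-Rank} is not enough, since what you need is a Nullstellensatz at the lower arity, which is exactly what you are trying to prove.

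The paper's proof takes a different inductive route that sidesteps both obstacles. Instead of inducting on the arity, it inducts on the \emph{tower} $\mcQ$, removing one polynomial $Q_0$ (with maximal index set $I$) at a time. The key trick for extraction is to \emph{construct} the candidate coefficient directly from test points: pick generic $x^j \in Z(\mcQ')\cap\{Q_0=1\}$ (for $j=1,\ldots,2^{d+1}$) and set $R^j(y_{[d+1]\setminus I}) := P(x^j_I, y_{[d+1]\setminus I})$, which is manifestly multilinear. Lemma \ref{connected} is then used not to propagate coefficients, but to verify the identity $P = R^j \cdot Q_0$ on a subvariety $Z(\mcQ^j)$: along an edge of a path in $X(y_{[d+1]\setminus I})$ both sides are unchanged since the step lies in $Z(\mcQ)$. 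The $2^{d+1}$ candidates $R^j$ are glued into a single $R$ with $P = R\,Q_0$ on $Z(\mcQ')$ via Corollary \ref{single-rep} and the gluing Claim \ref{gluing}, after which the inductive hypothesis applies to $P - R\,Q_0$ on the smaller tower $\mcQ'$. This structure avoids pointwise division entirely, and because the induction shrinks $\mcQ$ rather than the arity, it never lands on a tower with full-arity layers.
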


	The proof of this theorem (like everything else in this paper) will be by induction. For the inductive step we will use the following simple corollary many times.
	
	\begin{corollary}\label{single-rep}
			 If $ R_1,R_2:V^I\to\mF $ are multilinear maps, $ \mcQ_1\cup \mcQ_2 $ is a multi-linear tower on $ V^{[d+1]} $ for which the conclusion of Theorem \ref{Nullstellensatz} holds, and
			\[ 
			\mP_{x\in Z(\mcQ_1\cup\mcQ_2) }[R_1(x) = R_2(x)] \ge 1-q^{-s}
			 \]
			 then there exists a multi-linear map $ R:V^I\to\mF $ with $ R\restriction_{Z(\mcQ_1)} \equiv R_1\restriction_{Z(\mcQ_1)} $ and $ R\restriction_{Z(\mcQ_2)} \equiv R_2\restriction_{Z(\mcQ_2)}. $
	\end{corollary}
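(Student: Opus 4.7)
The plan is to apply Theorem~\ref{Nullstellensatz} directly to the difference $P := R_1 - R_2$, which is a multilinear map on $V^I$. The hypothesis $\mP_{x\in Z(\mcQ_1\cup\mcQ_2)}[R_1(x)=R_2(x)] \ge 1-q^{-s}$ says exactly that $P$ vanishes $q^{-s}$-a.e. on $Z(\mcQ_1\cup\mcQ_2)$. Since the tower $\mcQ_1\cup\mcQ_2$ is assumed to satisfy the conclusion of Theorem~\ref{Nullstellensatz}, we obtain
\[
rk_{\mcQ_1\cup\mcQ_2}(P) = 0,
\]
which, unpacking the definition of relative partition rank, yields a presentation
\[
P(x) = \sum_{Q\in \mcQ_1,\, I_Q\subset I} Q(x_{I_Q}) S_Q(x_{I\setminus I_Q}) \;+\; \sum_{Q\in \mcQ_2,\, I_Q\subset I} Q(x_{I_Q}) T_Q(x_{I\setminus I_Q})
\]
with every $S_Q,T_Q$ multilinear. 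Denote the two sums by $A_1$ and $A_2$, so that $P=A_1+A_2$. Each individual summand $Q\cdot S_Q$ (respectively $Q\cdot T_Q$) is multilinear on $V^I$ because $Q$ and its companion depend on complementary subsets of $I$, so $A_1,A_2$ are themselves multilinear on $V^I$, and by construction $A_1$ vanishes identically on $Z(\mcQ_1)$ while $A_2$ vanishes identically on $Z(\mcQ_2)$.

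Given this splitting, the required map is immediate: define
\[
R := R_1 - A_1 = R_2 + A_2.
\]
The two expressions agree because $A_1+A_2 = P = R_1 - R_2$. Then $R$ is multilinear on $V^I$, and $R\restriction_{Z(\mcQ_1)} \equiv R_1\restriction_{Z(\mcQ_1)}$ (since $A_1$ vanishes there) while $R\restriction_{Z(\mcQ_2)} \equiv R_2\restriction_{Z(\mcQ_2)}$ (since $A_2$ vanishes there), which is exactly what we need.

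The only point requiring care is matching Theorem~\ref{Nullstellensatz} to the present setup: the theorem is stated for a multilinear map on $V^{[d+1]}$, while $P$ lives on $V^I$. This is a cosmetic issue rather than a substantive obstacle. One can either apply the theorem with ambient space $V^I$ (the induction and all intermediate lemmas work verbatim with $|I|$ in place of $d+1$), or view $P$ as a multilinear function on $V^{[d+1]}$ trivial in the coordinates outside $I$, using Lemma~\ref{Fubini} to transfer the $q^{-s}$-a.e. vanishing on $Z(\mcQ_1\cup\mcQ_2)$ to $q^{-s'}$-a.e. vanishing on the projection $Z(\mcQ_1\cup\mcQ_2)_I$ at a mild loss in the parameter. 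Either way, the $rk_{\mcQ_1\cup\mcQ_2}(P)=0$ conclusion is available, and the rest of the argument is purely algebraic.
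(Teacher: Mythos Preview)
Your proof is correct and essentially identical to the paper's: apply Theorem~\ref{Nullstellensatz} to $R_1-R_2$ to get relative rank zero, split the resulting presentation into a $\mcQ_1$-part and a $\mcQ_2$-part (the paper calls these $f,g$ with $rk_{\mcQ_1}(f)=rk_{\mcQ_2}(g)=0$), and set $R=R_1-A_1=R_2+A_2$. Your extra paragraph about $V^I$ versus $V^{[d+1]}$ is a reasonable clarification that the paper leaves implicit.
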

	\begin{proof}[Proof of corollary]
		By Theorem \ref{Nullstellensatz}, $ rk_{\mcQ_1\cup\mcQ_2} (R_1-R_2) = 0. $ Therefore there exist multi-linear maps $ f,g:V^I\to\mF $  with $ rk_{\mcQ_1} (f) = rk_{\mcQ_2} (g) = 0 $ and $ R_1-R_2 = f+g. $ Setting $ R = R_1-f = R_2+g$ does the job. 
	\end{proof}
	
	\begin{proof}[Proof of Theorem \ref{Nullstellensatz}]
		By Lemma \ref{99-to-100}, we have that $ P\restriction_{Z(\mcQ)} = 0. $ The proof will proceed by induction on the degree of $ \mcQ, $ and then on the number of layers of maximal degree. For the base case $ \mcQ = \emptyset $ this is trivial. For the inductive step, choose some $ Q_0\in\mcQ $ with a maximal size index set $ I, $ and write $ \mcQ' = \mcQ\setminus\{Q_0\}. $ Apply Lemma \ref{connected} to the variety $ X =  Z(\mcQ')\cap \{Q_0 = 1\}. $ Choose $ x^1,\ldots,x^{2^{d+1}} \in X $ such that for each $ j, $ $ x^j $ satisfies the conclusion of Lemma \ref{connected} with $ q^{-s} $-a.e. $ y\in X\cap X(x^j_{\ge I}) $ and such that $ \mcQ \cup \mcQ(\textbf{x}_{\ge I})$ is $ (C,D,s) $-regular, where $ \mcQ(\textbf{x}_{\ge I}) := \bigcup_{j=1}^{2^{d+1}} \mcQ(x^j_{\ge I}). $ The reason we can choose such $ x^j $'s is that the first condition holds $ 2^{d+1}q^{-s} $-a.s. by a union bound and the second one holds (as long as $ A,B $ are sufficiently large) $ q^{-s} $-a.s. by Claim \ref{mult-coord} and Lemma \ref{Derivatives}. Let $ \mcQ^j = \mcQ'\cup \mcQ(x^j_{\ge I}) $ and define $ R^j(y_{[d+1]\setminus I}) = P(x^j_I,y_{[d+1]\setminus I}). $		
		We claim that
		\[ 
		\mP_{y\in Z(\mcQ^j)} [P(y) = R^j(y_{[d+1]\setminus I})Q_0(y_I)] \ge 1-q^{-s}.
		 \]
 		Note that
		\[ 
		Z(\mcQ^j) = \bigsqcup_{a\in\mF} Z(\mcQ^j)\cap \{Q_0 = a\},
		 \]
		 and both sides vanish on $ Z(\mcQ) $ by assumption, so by scaling one of the coordinates it is enough to prove that
		 \[ 
		 \mP_{y\in Z(\mcQ^j) \cap \{Q_0 = 1\}} [P(y) = R^j(y_{[d+1]\setminus I})Q(y_I)] \ge 1-q^{-s}.
		  \]
		  Recall that by our definitions, $ Z(\mcQ^j) \cap \{Q_0 = 1\} =  X\cap X(x^j_{\ge I}). $ Now, for any $ y\in X\cap X(x^j_{\ge I}) $ we have equality at the point $ (x^j_I,y_{[d+1]\setminus I})\in Z(\mcQ^j). $ Also, for $ q^{-s} $-a.e. such $ y, $ the points $ x^j_I,y_I $ are connected in $ X(y_{[d+1]\setminus I}). $ Each time two points in $ X $ differ by a single coordinate in $ I, $ the values of both sides do not change, because the difference of the two points is in $ Z(\mcQ). $  This means that 
		 \[ 
		 \mP_{y\in X\cap X(x^j_{\ge I})} [P(y) = R^j(y_{[d+1]\setminus I})Q_0(y_I)] \ge 1-q^{-s}
		 \]
 		as desired, implying the same for $ Z(\mcQ^j). $
		
		By Lemma \ref{99-to-100}, we get that $  P(y) = R^j(y_{[d+1]\setminus I})Q(y_I) $ for all $ y\in Z(\mcQ^j). $ The following claim will complete the inductive step.
		
		\begin{claim}
			There exist constants $ C(d,H),D(d,H) $ such that if $ \mcF,\mcG^j $ for $ j\in[2^k] $ are towers on $ V^{[d+1]}, $ of degree $ \le d $ and height $ \le H, $ such that the $ \mcG^j $ depend only on the first $ k $ coordinates, and $ Q:V^I\to\mF,R^j:V^{[d+1]\setminus I}\to\mF $ are multi-linear maps, where
			\begin{enumerate}
				\item $ P(y) = R^j(y_{[d+1]\setminus I})Q(y_I)\ \forall y\in Z(\mcF \cup \mcG^j),$
				\item $ \mcF \cup \{Q\} \cup \bigcup_{j\in[2^k]} \mcG^j $ is $ (C,D,s) $-regular and
				\item the nullstellensatz holds for $ \mcF \cup \bigcup_{j\in[2^k]} \mcG^j $ and towers contained in it,
			\end{enumerate}
			    then there exists a multi-linear $ R:V^{[d+1]\setminus I}\to\mF $ such that $  P(y) = R(y_{[d+1]\setminus I})Q(y_I) $ for all $ y\in Z(\mcF). $
		\end{claim}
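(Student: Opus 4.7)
The proof plan is induction on $k$, paralleling the structure of Corollary \ref{gluing-many}. The base case $k=0$ is trivial, since then $\mcG^1$ is a multi-linear tower depending on no coordinates and hence may be taken to be empty, giving $R:=R^1$. For the inductive step I pair the representations as $(R^{2j-1},R^{2j})$ for $j\in[2^{k-1}]$ and first merge each pair into a single multi-linear map $\tilde R^j:V^{[d+1]\setminus I}\to\mF$ via Corollary \ref{single-rep}, applied with $\mcQ_1=\mcF\cup\mcG^{2j-1}$, $\mcQ_2=\mcF\cup\mcG^{2j}$, $R_1=R^{2j-1}$, $R_2=R^{2j}$. The nullstellensatz required for $\mcQ_1\cup\mcQ_2$ is granted by assumption 3, as it is a sub-tower of $\mcF\cup\bigcup_l\mcG^l$.

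To verify the density hypothesis for Corollary \ref{single-rep}, note that on $Z(\mcQ_1\cup\mcQ_2)$ we have $(R^{2j-1}-R^{2j})(y_{[d+1]\setminus I})\cdot Q(y_I)\equiv 0$. Since $R^{2j-1}-R^{2j}$ depends only on $y_{[d+1]\setminus I}$, I instead work over the projection $Z(\mcQ_1\cup\mcQ_2)_{[d+1]\setminus I}$. For $q^{-s}$-a.e.\ $y$ in this projection, Lemma \ref{Derivatives} guarantees that the induced tower on $V^I$ is regular, and because $\mcF\cup\{Q\}\cup\bigcup_l\mcG^l$ is $(C,D,s)$-regular, $Q$ has high partition rank relative to this fiber tower; the inductively assumed Theorem \ref{Bias-Rank} at degree $d$ then forces $|\mP_{x_I\in\text{fiber}}[Q(x_I)=0]-q^{-1}|\le q^{-s}$, producing some $x_I$ in the fiber with $Q(x_I)\ne 0$ and thus $R^{2j-1}(y)=R^{2j}(y)$. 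Lemma \ref{Fubini} then upgrades this to $\mP_{x\in Z(\mcQ_1\cup\mcQ_2)}[R^{2j-1}(x_{[d+1]\setminus I})=R^{2j}(x_{[d+1]\setminus I})]\ge 1-q^{-s}$, and Corollary \ref{single-rep} yields $\tilde R^j$ with $\tilde R^j\equiv R^{2j-1}$ on $Z(\mcQ_1)$ and $\tilde R^j\equiv R^{2j}$ on $Z(\mcQ_2)$. In particular $P(y)=\tilde R^j(y_{[d+1]\setminus I})Q(y_I)$ on $Z(\mcQ_1)\cup Z(\mcQ_2)$.

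Next I apply Claim \ref{gluing} with $\mcQ=\mcF$, $\mcG_1=\mcG^{2j-1}$, $\mcG_2=\mcG^{2j}$, $i=k$, and $f(y):=P(y)-\tilde R^j(y_{[d+1]\setminus I})Q(y_I)$ --- which is multi-linear because $P$ is and the product $\tilde R^j\cdot Q$ has factors supported on the disjoint index sets $[d+1]\setminus I$ and $I$. Since every map in $\mcG^{2j-1}\cup\mcG^{2j}$ has index set contained in $[k]$, the restriction $(\mcG^{2j-1}\cup\mcG^{2j})_{[d+1]\setminus\{k\}}$ coincides with $\mcH^j:=(\mcG^{2j-1}\cup\mcG^{2j})_{[k-1]}$, which depends only on the first $k-1$ coordinates. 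The conclusion of Claim \ref{gluing} therefore reads $f\equiv 0$ on $Z(\mcF\cup\mcH^j)$, so $P(y)=\tilde R^j(y_{[d+1]\setminus I})Q(y_I)$ for all $y\in Z(\mcF\cup\mcH^j)$. Applying the inductive hypothesis at level $k-1$ to the data $(\tilde R^j,\mcH^j)_{j\in[2^{k-1}]}$ --- whose regularity of $\mcF\cup\{Q\}\cup\bigcup_j\mcH^j$ and the nullstellensatz for its sub-towers are inherited from the original assumptions after possibly enlarging $C,D$ in the outer recursion --- produces a single multi-linear $R$ with $P(y)=R(y_{[d+1]\setminus I})Q(y_I)$ on $Z(\mcF)$, completing the induction.

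The main technical obstacle is the density verification in the first step. The naive observation $(R^{2j-1}-R^{2j})Q=0$ only gives $\mP[R^{2j-1}=R^{2j}]\ge 1-q^{-1}$ on the combined variety, which is far from the $1-q^{-s}$ threshold required by Corollary \ref{single-rep}. The improvement crucially exploits the fact that $R^{2j-1}-R^{2j}$ is a function of $y_{[d+1]\setminus I}$ alone, allowing one to descend to the projection and use the bias-rank theorem at degree $d$ together with Lemmas \ref{Derivatives} and \ref{Fubini} to find a non-vanishing witness of $Q$ in almost every fiber. The remaining steps --- gluing via Claim \ref{gluing}, identifying the projected variety as $Z(\mcH^j)$, and verifying sub-tower inheritance of regularity and the nullstellensatz --- are routine.
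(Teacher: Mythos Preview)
Your proof is correct and follows essentially the same route as the paper: induction on $k$, pairing $\mcG^{2j-1},\mcG^{2j}$, merging the two representations via Corollary \ref{single-rep}, applying Claim \ref{gluing} in the $k$-th coordinate to drop to $(\mcG^{2j-1}\cup\mcG^{2j})_{[k-1]}$, and recursing. The only cosmetic difference is in the density verification: the paper argues via Lemmas \ref{Derivatives} and \ref{Atom-size} that for $q^{-s}$-a.e.\ $y_{[d+1]\setminus I}$ the fiber meets $\{Q=1\}$, whereas you invoke the contrapositive of Theorem \ref{Bias-Rank} at degree $d$ to get equidistribution of $Q$ on the fiber --- these are equivalent, and your explicit mention of Lemma \ref{Fubini} to pass from the projection to the full variety fills in a step the paper leaves implicit.
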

	
		\begin{proof}[Proof of claim]
			 We prove this by induction on $ k. $ For $ k = 0 $ there is nothing to prove. Now suppose we have proved it up to $ k $ and  $ \mcG^j $ are all defined on the first $ k+1 $ coordinates. By looking at $ Z(\mcF\cup\mcG^i\cup\mcG^{i+1}) $, we get $ R^i(y_{[d+1]\setminus I})Q(y_I) = R^{i+1}(y_{[d+1]\setminus I})Q(y_I) $ on $Z(\mcF\cup\mcG^i\cup\mcG^{i+1}). $ By Lemma \ref{Derivatives} (together with Lemma \ref{Atom-size}), for $ q^{-s} $-a.e. $ y_{[d+1]\setminus I}\in Z(\mcF\cup\mcG^i\cup\mcG^{i+1})_{[d+1]\setminus I}, $ there exists $ y_I $ such that $ y\in Z(\mcF\cup\mcG^i\cup\mcG^{i+1})\cap\{Q(y_I)=1\}, $ in which case $ R^i(y_{[d+1]\setminus I}) = R^{i+1}(y_{[d+1]\setminus I}). $ By Corollary \ref{single-rep}, there exists $ S^i $ such that $ P\restriction_{Z(\mcF \cup \mcG^j)} = S^i(x_{[d+1]\setminus I})Q(x_I) $ for $ j=i,i+1. $ Applying Lemma \ref{gluing}, we get that $  P\restriction_{Z(\mcF \cup (\mcG^i \cup \mcG^{i+1})_{[k]})} = S^i(x_{[d+1]\setminus I})Q(x_I). $ Now apply the induction hypothesis to the collection $  (\mcG^i \cup \mcG^{i+1})_{[k]} $ and we are done.
		\end{proof}
		
		Applying the above claim with $ \mcF = \mcQ' $ and $ \mcG^j = \mcQ(x^j_{\ge I}) $ (Note that the equations coming from $ x^j_{\ge I} $ are of deg $ < |I|, $ so the induction hypothesis applies to $ \mcQ'\cup\bigcup_{j\in[2^{d+1}]} Q^j $ and towers partial to it) we get that $ P(y) = R(y_{[d+1]\setminus I})Q(y_I) $ on $ Z(\mcQ'). $ By the inductive hypothesis, $ rk_{\mcQ'} \left( P(y) - R(y_{[d+1]\setminus I})Q(y_I) \right) = 0 $ and this completes the inductive step. 	
		\end{proof}
	
		\section{Reduction to the multi-linear case}\label{multilinear-reduction}
		
		In this section, we show that Theorems \ref{Bias-rank-poly} and \ref{null-poly} follow from their multi-linear analogues - Theorem \ref{Bias-Rank} and Theorem \ref{Nullstellensatz}. In order to do this, we will show that both relative bias and relative rank behave well when passing from polynomials to corresponding multi-linear maps. We start by recalling the polarization identity which associates a multi-linear map to a polynomial. The following is well known.
		
		\begin{claim}
			Given a polynomial $ P $ on $ V $ of degree $ d < char(\mF), $ the map \linebreak
   $ \bar P: V^d\to\mF $ defined by $ \bar P(x_1,\ldots,x_d) = \frac{1}{d!}\nabla_{x_1}\ldots\nabla_{x_d} P(0)$ is multilinear and satisfies $ \bar P(x,\ldots,x) = \tilde P(x) $ (the homogeneous part of $ P $ of highest degree).
		\end{claim}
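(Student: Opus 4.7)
The plan is to verify the claim in three steps: reduce to the top-degree homogeneous part of $P$, establish multilinearity via the standard inclusion–exclusion expansion of iterated finite differences, and finally check the diagonal identity. Throughout, recall that the discrete derivative $\Delta_y P(x) := P(x+y) - P(x)$ lowers the degree of any polynomial by one.

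First I would write $P = P_0 + P_1 + \cdots + P_d$ with each $P_j$ homogeneous of degree $j$, so that $P_d = \tilde P$. Since each application of a $\Delta$-operator reduces degree by one, the iterated operator $\Delta_{x_1}\cdots\Delta_{x_d}P_j$ vanishes identically for every $j < d$, and hence
\[
\Delta_{x_1}\cdots\Delta_{x_d}P \;=\; \Delta_{x_1}\cdots\Delta_{x_d}\tilde P,
\]
viewed as a polynomial in the single argument of $P$. Moreover, applying $d$ discrete derivatives to a degree-$d$ polynomial yields a constant in that argument, so this expression may be unambiguously evaluated at $0$. The hypothesis $d < \mathrm{char}(\mF)$ makes the scalar $\frac{1}{d!}$ meaningful.

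Next I would unfold the operator using the standard inclusion–exclusion formula:
\[
\Delta_{x_1}\cdots\Delta_{x_d}\tilde P \bigm|_{\text{arg}=0} \;=\; \sum_{S \subset [d]} (-1)^{d-|S|}\, \tilde P\Bigl(\sum_{i \in S} x_i\Bigr).
\]
Introducing formal scalars $t_i$ and writing $\tilde P(t_1 x_1 + \cdots + t_d x_d)$, the homogeneity of $\tilde P$ makes this a polynomial of total degree $d$ in $(t_1,\ldots,t_d)$, and a direct computation (peeling off one $\Delta$ at a time) identifies the right-hand side above with $d!$ times the coefficient of $t_1 t_2 \cdots t_d$ in that expansion. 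This coefficient is plainly symmetric and multilinear in $(x_1,\ldots,x_d)$, which gives multilinearity of $\bar P = \frac{1}{d!}\Delta_{x_1}\cdots\Delta_{x_d}P$.

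For the diagonal identity, specialize $x_1 = \cdots = x_d = x$: by homogeneity of $\tilde P$,
\[
\tilde P(t_1 x + \cdots + t_d x) \;=\; (t_1 + \cdots + t_d)^d\, \tilde P(x),
\]
and the coefficient of $t_1 \cdots t_d$ on the right is $d!\, \tilde P(x)$. Dividing by $d!$ yields $\bar P(x,\ldots,x) = \tilde P(x)$, as required. I do not anticipate any serious obstacle beyond the bookkeeping; the sole delicate point is the characteristic assumption, which is essential (over $\mF_p$ the polynomial $x^p$ has $\Delta$-derivatives identically zero while $\tilde P \not\equiv 0$, showing that the factor $d!$ cannot be inverted without this hypothesis).
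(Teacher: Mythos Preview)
Your argument is correct and complete: the reduction to the top homogeneous part, the inclusion--exclusion expansion, the identification with $d!$ times the coefficient of $t_1\cdots t_d$, and the diagonal check via the multinomial expansion are all sound, and you correctly flag where the hypothesis $d<\mathrm{char}(\mF)$ enters. The paper itself gives no proof at all, simply declaring the claim ``well known,'' so there is nothing to compare against beyond noting that you have supplied the standard polarization argument the paper omits.
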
 
		\begin{definition}
			Given a polynomial tower $ \mcQ $ on $ V, $ of degree $ \le d < char(\mF) $ and height $ \le H, $ there exists a symmetric multi-linear tower $ \mcQ(k) $  of degree $ \le d $ and height $ \le 2^k H $ on $ V^k $ corresponding to it. For a layer $ \mcQ_i $ of degree $ e $ and a subset $ E\subset [k] $ of size $ e, $ we get $ \mcQ(k)_{i,E}:V^E\to\mF $ given by $ \bar{\mcQ_i}. $ Conversely, given a multi-linear tower $ \mcQ $ on $ V^k $ it induces a polynomial tower $ \mcQ\circ D $ on $ V $ by pre-composition with the diagonal map $ \mcQ\circ D = \mcQ(x,\ldots,x). $ 
		\end{definition}
		
		\begin{claim}\label{poly-to-multilinear}
			If $ \mcQ $ is an $ (A2^{kB},B,s) $-regular polynomial tower, then $ \mcQ(k) $ is $ (A,B,s) $-regular. Conversely if $ \mcQ(k) $ is $ (A2^{kB},B,s) $-regular and $ k \ge \deg(\mcQ) $, then $ \mcQ $ is $ (A,B,s) $-regular.
		\end{claim}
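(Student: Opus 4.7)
The plan is to prove both directions by carefully translating between Schmidt-rank presentations (in the polynomial world) and partition-rank presentations (in the multi-linear world) via the polarization identity. The key algebraic input is the product formula for polarization: if $f, g$ are homogeneous of degrees $a$ and $b$ with $a + b = d$, then
\[ \overline{fg}(t_1, \ldots, t_d) = \frac{a!\, b!}{d!} \sum_{I \subset [d],\ |I| = a} \bar f(t_I)\, \bar g(t_{[d] \setminus I}), \]
which expresses one product as a sum of at most $\binom{d}{a} \le 2^d$ partition-rank terms. In the other direction, substituting $t_1 = \cdots = t_d = x$ into any partition-rank decomposition of $\bar P$ yields a Schmidt-rank decomposition of $\tilde P$ of the same length, since each factor $Q_j(x,\ldots,x)$ has degree $|I_j| < d$. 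I will fix an ordering on the layers of $\mcQ(k)$ that respects the polynomial indexing and, within each polynomial index $i$, places the layer $(i, [e_i])$ with $e_i = \deg \mcQ_i$ first.

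For the forward direction I argue by contraposition: if $\mcQ(k)$ fails to be $(A,B,s)$-regular, then some multi-linear layer $(i, E)$ admits $0 \ne a \in \mF^{m_i}$ with a relative partition-rank presentation of size $\le A(M + s)^B$, where $M$ is the dimension at or above $(i, E)$. Restricting this presentation to the diagonal $x_j = x$ for $j \in E$ collapses it to a Schmidt-rank presentation of $a \cdot \tilde Q_i$ modulo $\mcQ_{<i}$ of the same size, so $rk_{\mcQ_{<i}}(\mcQ_i) \le A(M+s)^B$. Since $M \le 2^k (m_i + \cdots + m_h)$, this contradicts the assumed $(A\cdot 2^{kB}, B, s)$-regularity of $\mcQ$.

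For the converse, assume $k \ge \deg(\mcQ)$ and that some polynomial layer $i$ admits a Schmidt-rank presentation $a \cdot \tilde Q_i + \sum S_{i',j} \tilde Q_{i',j} = \sum_{l=1}^r f_l g_l$ of size $r \le A(m_i + \cdots + m_h + s)^B$. Passing to homogeneous parts of degree $e_i$ and polarizing via the product formula above, each term on the right becomes a sum of at most $2^{e_i}$ multi-linear products, and each $S_{i',j} \tilde Q_{i',j}$ on the left becomes a sum of products $\bar{S_{i',j}}(t_I)\, \bar{Q_{i',j}}(t_{I^c})$ with $|I^c| = e_{i'}$; the factor $\bar{Q_{i',j}}(t_{I^c})$ is exactly the map $\mcQ(k)_{(i', I^c)}$, and $I^c \subset [e_i]$ combined with $i' < i$ places it strictly below $(i, [e_i])$ in our ordering. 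This yields $prk_{\mcQ(k)_{<(i,[e_i])}}(\mcQ(k)_{(i,[e_i])}) \le r \cdot 2^{e_i} \le r \cdot 2^d$, and since the dimension above $(i,[e_i])$ in $\mcQ(k)$ is at least $m_i + \cdots + m_h$, using $k \ge d$ this contradicts the $(A\cdot 2^{kB}, B, s)$-regularity of $\mcQ(k)$.

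The main bookkeeping obstacle is the choice of ordering on $\mcQ(k)$: the polarization of a product $S \tilde Q_{i',j}$ produces many factors $\bar{Q_{i',j}}(t_{I^c})$ over varying $I^c$, and all of these must lie in strictly lower layers for the relative partition-rank bound to apply. Placing $(i, [e_i])$ first among the multi-linear layers of polynomial index $i$ resolves this, since every such factor has $i' < i$. The remaining losses in the regularity constants, namely the $2^d$ overhead from polarizing products and the $2^k$ blow-up in the number of multi-linear layers per polynomial layer, are both absorbed cleanly into the $A \cdot 2^{kB}$ factor in the hypothesis.
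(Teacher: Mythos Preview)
Your proof is correct and follows the same approach as the paper: diagonal restriction to show $prk_{\mcQ(k)_{<(i,E)}}(\mcQ(k)_{(i,E)}) \ge rk_{\mcQ_{<i}}(\mcQ_i)$ for the forward direction, and the polarization product formula to show $prk_{\mcQ(k)_{<(i,[e_i])}}(\mcQ(k)_{(i,[e_i])}) \le 2^{e_i}\, rk_{\mcQ_{<i}}(\mcQ_i)$ for the converse. The paper's proof is terse and leaves implicit both the product formula for polarization and the layer-ordering bookkeeping that you have made explicit; your care with placing $(i,[e_i])$ first among the layers of polynomial index $i$ is exactly what is needed to ensure the polarized cross-terms $\bar Q_{i',j}(t_{I^c})$ land in strictly lower layers.
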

	
		\begin{proof}
			Assume without loss of generality that $ \mcQ $ is homogeneous. The polynomials of degree $ d_i $ in $ \mcQ $ give $ \binom{k}{d_i} \le 2^k $ polynomials in $ \mcQ(k), $ so we just need to check that 
			\[ 
			prk_{ \bar{\mcQ}_{<i} } (\bar{\mcQ}_i) \ge rk_{\mcQ_{<i}} (\mcQ_i).
			 \]
			This follows by plugging in the diagonal for any relative low partition rank combination. For the converse, we need to check that 
			\[ 
			prk_{ \bar{\mcQ}_{<i} } (\bar{\mcQ}_i) \le 2^k rk_{\mcQ_{<i}} (\mcQ_i).
			 \]
			 This follows from the polarization identity for any low relative polynomial rank combination. 
		\end{proof}
		
		We recall Theorem \ref{Bias-rank-poly}:
		
		\begin{theorem}[Relative bias implies relative low rank]
			There exist  constants $A(d,H), B(d,H)$ such that if $\mcQ$ is an $(A,B,s)$-regular tower of degree $\le d < char(\mF)$ and height $\le H$ and $P$ is a polynomial of degree $\le d$ with
			\[
			bias_{Z(\mcQ)}(P) \ge q^{-s}.
			\]
			then
			\[
			rk_\mcQ (P) \le A(1+s)^B. 
			\] 
		\end{theorem}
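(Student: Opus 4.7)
The plan is to reduce Theorem \ref{Bias-rank-poly} to the multi-linear version (Theorem \ref{Bias-Rank}) via the polarization identity. Given $P$ of degree $\le d$ with $bias_{Z(\mcQ)}(P) \ge q^{-s}$, let $\bar P$ denote the symmetric multi-linear polarization of $\tilde P$ on $V^d$, and let $\mcQ(d)$ be the multi-linear tower on $V^d$ introduced in Section \ref{multilinear-reduction}. Claim \ref{poly-to-multilinear} already transfers regularity: $(A2^{dB},B,s)$-regularity of $\mcQ$ yields $(A,B,s)$-regularity of $\mcQ(d)$, at the cost of enlarging the universal constants. Moreover, once we have a presentation witnessing $prk_{\mcQ(d)}(\bar P)\le A(1+s)^B$, restricting it to the diagonal $V\hookrightarrow V^d$ produces a relative Schmidt rank presentation of $\tilde P$ modulo $\tilde\mcQ$ of the same length, yielding $rk_\mcQ(P)\le A(1+s)^B$ as required.

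The heart of the proof is therefore the transfer of the polynomial relative bias $|\mE_{x\in Z(\mcQ)}\chi(P(x))|\ge q^{-s}$ into a multi-linear relative bias of $\bar P$ on $Z(\mcQ(d))$. I would perform iterated Cauchy-Schwarz (Gowers differencing) on the variety $Z(\mcQ)$, exactly $d$ times. Each round introduces a fresh shift $h_i$ and replaces the current variety by its intersection with its $h_i$-translate; this intersection is cut out by $\mcQ$ together with the auxiliary family $\{Q(x+h_i)-Q(x):Q\in\mcQ\}$ of strictly lower degree in $x$, which we insert as new lower layers to obtain an augmented polynomial tower. By applying Lemma \ref{Fubini} and Lemma \ref{Derivatives} (transferred to the polynomial setting via Claim \ref{poly-to-multilinear}), for $q^{-s'}$-a.e.\ choice of $(h_1,\dots,h_d)$ the augmented tower remains regular with slightly weakened constants. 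After $d$ rounds the surviving polynomial $\Delta_{h_1}\cdots\Delta_{h_d}P$ is identically equal to $d!\,\bar P(h_1,\dots,h_d)$ (lower-degree components of $P$ are annihilated by $d$ differencings), so we arrive at a lower bound on
\[
\bigl|\mE_{(h_1,\ldots,h_d)}\chi\bigl(d!\,\bar P(h_1,\ldots,h_d)\bigr)\bigr|,
\]
where the expectation is over a regular multi-variable variety. A final appeal to Lemma \ref{affine-to-linear} strips off the multi-affine lower-order contributions and identifies this quantity with a relative bias of the multi-linear $\bar P$ on $Z(\mcQ(d))$. Theorem \ref{Bias-Rank} then delivers the desired bound on $prk_{\mcQ(d)}(\bar P)$.

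The main obstacle is the iterated Cauchy-Schwarz step. In the classical setting the domain is a vector space and translation-invariant, so Gowers differencing is harmless; here each differencing disturbs the tower structure by introducing auxiliary polynomials of lower degree, and one must reassemble these into a larger tower while preserving relative regularity. Controlling this through $d$ successive shifts requires iterating Lemma \ref{Derivatives} carefully, accumulating only polynomial losses in $s$ at each stage, and ensuring that the regularity constants at the end are still within the scope of Theorem \ref{Bias-Rank}. This bookkeeping is precisely what is meant by the paper's remark that the reduction to the multi-linear case is not so straightforward in the relative setting; once it is executed, the remaining steps (polarization, invocation of Theorem \ref{Bias-Rank}, and diagonal restriction) are formal.
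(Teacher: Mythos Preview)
Your proposal captures the core reduction that the paper carries out in Corollary~\ref{lower-tower}: iterated Cauchy--Schwarz on $Z(\mcQ)$ turns the polynomial bias into a multi-linear bias of $\bar P$ on $Z(\mcQ(d))$, and then Theorem~\ref{Bias-Rank} together with diagonal restriction finishes. For towers $\mcQ$ of degree \emph{strictly less} than $\deg(P)$, this is exactly the paper's argument.

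The gap is in the Cauchy--Schwarz step itself when $\mcQ$ has layers of degree equal to $\deg(P)$. You write that Lemma~\ref{Fubini} and Lemma~\ref{Derivatives} can be ``transferred to the polynomial setting via Claim~\ref{poly-to-multilinear},'' but this is where the circularity hides. The polynomial Fubini statement (Corollary~\ref{fubini-poly}) needs the polynomial atom-size lemma (Lemma~\ref{atom-size-poly}), whose proof requires showing $bias_{Z(\mcQ_{<i})}(a\cdot\mcQ_i)$ is small for each layer $i$; but that is precisely the contrapositive of Theorem~\ref{Bias-rank-poly} applied to a degree-$d_i$ polynomial on the tower $\mcQ_{<i}$. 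If $\mcQ$ contains more than one layer of maximal degree, then $\mcQ_{<i}$ itself still has maximal degree, and you are invoking the very theorem you are trying to prove. Claim~\ref{poly-to-multilinear} transfers \emph{regularity} between $\mcQ$ and $\mcQ(k)$, but it does not let you read off $|Z(\mcQ)|$ from $|Z(\mcQ(k))|$, so the multi-linear atom-size lemma does not rescue you here.

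The paper breaks this circularity with an additional induction that you omit: Theorem~\ref{bias-rank-poly-aux} introduces an auxiliary parameter $l$ counting the number of top-degree layers, and peels them off one at a time by Fourier-expanding over the top layer $\mcQ_H$ and using sub-additivity of relative rank to isolate a single coefficient $a_0\in\mF^{m_H}$ for which $P+a_0\cdot\mcQ_H$ has small relative rank on the shorter tower $\mcQ_{<H}$. Only after this layer-peeling does the Cauchy--Schwarz reduction apply cleanly (to towers of degree $<\deg P$, where the inductive hypothesis supplies atom-size). Your sketch is therefore missing this second half of the argument.
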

		
		We call $ \mcQ $ as above a tower of type $ (d,H) $ for short. The proof will be by induction on the degree $ d. $ For $ d = 1 $ it follows from classical Fourier analysis. Now we assume the theorem holds for degree $ d $ and explore some consequences.
		 
		\begin{lemma}[Regular varieties are of the expected size (d,H)]\label{atom-size-poly}
			There exist constants $A(d,H),B(d,H)$ such that for any $s>0$ and for any $(A,B,s)$-regular polynomial tower $\mcQ$ of type $ (d,H) $ defined on $ V $ of degree $ \le d, $ height $ \le H $ and dimension $m,$ we have 
			\[
			\left|q^m \frac{|Z(\mcQ)|}{|V|}-1\right|\le q^{-s}.
			\]
		\end{lemma}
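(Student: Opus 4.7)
The plan is to mirror the proof of Lemma \ref{Atom-size} in the polynomial setting, using the inductive hypothesis that Theorem \ref{Bias-rank-poly} holds at degree $d$. Let $A',B'$ denote the constants from Theorem \ref{Bias-rank-poly} at type $(d,H)$. I would choose $A(d,H), B(d,H)$ large enough so that $(A,B,s)$-regularity of a tower $\mcQ$ of type $(d,H)$ implies that for every $i \in [h]$ (where $h\le H$ is the height), each truncation $\mcQ_{<i}$ is $(A', B', s + H + m_i + 1)$-regular as a tower of type $(d,H-1)$. Concretely, since the regularity condition at layer $j < i$ of $\mcQ_{<i}$ requires $rk_{\mcQ_{<j}}(\mcQ_j) > A'(m_j + \ldots + m_{i-1} + s + H + m_i + 1)^{B'}$, and we already have $rk_{\mcQ_{<j}}(\mcQ_j) > A(m_j + \ldots + m_h + s)^B$, choosing $A \gg A'(H+2)^{B'}$ and $B = B'$ suffices (for $s\ge 1$; the claim is trivial otherwise).

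Next, for each layer $i$ and each $0 \ne a \in \mF^{m_i}$, the polynomial $a \cdot \mcQ_i$ has degree $\le d$, and by the definition of relative rank of a collection,
\[
rk_{\mcQ_{<i}}(a \cdot \mcQ_i) \ge rk_{\mcQ_{<i}}(\mcQ_i) > A(m_i + \ldots + m_h + s)^B \ge A'(s + H + m_i + 2)^{B'}.
\]
Applying the contrapositive of Theorem \ref{Bias-rank-poly} at degree $d$ to $a \cdot \mcQ_i$ on $Z(\mcQ_{<i})$ yields
\[
bias_{Z(\mcQ_{<i})}(a \cdot \mcQ_i) < q^{-(s + H + m_i + 1)}.
\]

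Then standard Fourier analysis on $\mF^{m_i}$ gives
\[
q^{m_i}\frac{|Z(\mcQ_{\le i})|}{|Z(\mcQ_{<i})|} = \sum_{a \in \mF^{m_i}} \mE_{x \in Z(\mcQ_{<i})} \chi(a \cdot \mcQ_i(x)) = 1 + e_i,
\]
where the $a = 0$ term contributes $1$ and the remaining $q^{m_i} - 1$ terms each contribute in absolute value at most $q^{-(s+H+m_i+1)}$, so $|e_i| \le q^{-(s+H+1)}$. Telescoping through the $h \le H$ layers,
\[
\frac{|Z(\mcQ)|}{|V|} = \prod_{i=1}^{h} \frac{|Z(\mcQ_{\le i})|}{|Z(\mcQ_{<i})|} = \prod_{i=1}^{h} (1 + e_i) q^{-m_i} = (1 + e) q^{-m},
\]
where $|e| \le q^{-s}$ after bounding the product via $|(1+e_i) \cdots (1+e_h) - 1| \le H q^{-(s+H+1)} + O(q^{-2(s+H+1)}) \le q^{-s}$ (for $H$ absorbed into the constants).

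The main (minor) obstacle is just the bookkeeping that ensures each truncation $\mcQ_{<i}$ enters Theorem \ref{Bias-rank-poly} with enough slack in the regularity parameter to produce a bias bound strong enough to survive the $q^{m_i}$ blow-up in the Fourier sum and the $h$-fold telescoping; the choice $A \gg A'(H+2)^{B'}, B = B'$ described above handles this cleanly.
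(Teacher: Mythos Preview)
Your proposal is correct and matches the paper's own argument exactly: the paper's proof of Lemma~\ref{atom-size-poly} simply reads ``Identical to that of Lemma~\ref{Atom-size},'' and what you have written is precisely the polynomial-setting transcription of that proof, using the inductive hypothesis (Theorem~\ref{Bias-rank-poly} at degree $d$) in place of Theorem~\ref{Bias-Rank-par}. The bookkeeping with the constants and the telescoping product are handled the same way.
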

	
		\begin{proof}
			Identical to that of Lemma \ref{Atom-size}.
		\end{proof}
	
		Now we will prove a lemma analogous to Lemma \ref{Derivatives}. 
		For a polynomial tower $ \mcQ $ and $ t\in  V $  we denote
		\[ 
		\del_t \mcQ = \{\del_t Q:Q\in\mcQ\}\ ,\ \mcQ_t = \mcQ\cup\del_t\mcQ.
		\]
		
	\begin{lemma}[Derivatives]\label{derivatives-poly}
		For any $C,D$ there exist constants $A(C,D,d,H),$ $B(C,D,d,H)$ such that if $\mcQ$ is an $(Ak^B,B,s)$-regular polynomial tower of type $ (d,H) $ then for $q^{-s}$-a.e. $t_1,\ldots,t_k\in Z(\mcQ(1))$ the tower $\mcQ_{t_1}\cup\ldots\cup\mcQ_{t_k}$ is $(C,D,s)$-regular. 
	\end{lemma}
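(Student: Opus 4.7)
The plan is to reduce to the multilinear Derivatives lemma (Lemma \ref{Derivatives}) via the polarization correspondence of Claim \ref{poly-to-multilinear}. I would set $N = k + d$ and work with the symmetric multi-linear tower $\mcQ(N)$ on $V^N$, which has degree $\le d$ and height $\le 2^N H$. By Claim \ref{poly-to-multilinear}, the hypothesis that $\mcQ$ is $(Ak^B, B, s)$-regular translates, for $A, B$ chosen large enough relative to $C, D, d, H$, into $(\widetilde C, \widetilde D, s)$-regularity of $\mcQ(N)$; the extra factor of $2^{(k+d)\widetilde D}$ incurred by the claim is absorbed into the $A k^B$ slack on the input side.

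I would then apply the multi-linear Derivatives lemma to $\mcQ(N)$ with fixing set $I = [k]$: for $q^{-s}$-a.e.\ $(t_1, \ldots, t_k)$ in the projection $Z(\mcQ(N))_{[k]} \subset V^k$, the restricted tower $\mcQ(N)(t_1, \ldots, t_k)$ on $V^d$ is $(C', D', s)$-regular for whatever $C', D'$ we target. The heart of the argument is that this restricted tower coincides, up to nonzero scalars and symmetry-induced duplication, with the polarization of $\mcQ$ enlarged by all iterated directional derivatives: iterating the basic polarization identity
\[
\bar Q(t, x_1, \ldots, x_{e-1}) = \tfrac{1}{e}\, \overline{\del_t Q}(x_1, \ldots, x_{e-1})
\]
shows that fixing the first $k$ coordinates at $t_1, \ldots, t_k$ converts $\bar Q_E$ into the polarization of $\del_{t_{i_1}}\cdots\del_{t_{i_r}} Q$, where $\{i_1, \ldots, i_r\} = E \cap [k]$. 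In particular $\mcQ(N)(t_1,\ldots,t_k)$ contains $(\mcQ_{t_1} \cup \ldots \cup \mcQ_{t_k})(d)$ as a sub-tower. Since relative rank is monotone under removing lower-layer constraints, the sub-tower inherits $(C', D', s)$-regularity from the full restricted tower. Finally, invoking Claim \ref{poly-to-multilinear} in the converse direction converts multi-linear regularity of $(\mcQ_{t_1} \cup \ldots \cup \mcQ_{t_k})(d)$ into $(C,D,s)$-regularity of the polynomial tower $\mcQ_{t_1} \cup \ldots \cup \mcQ_{t_k}$, as desired.

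The main obstacle is bookkeeping, in two places. First, one must organise $\mcQ_{t_1} \cup \ldots \cup \mcQ_{t_k}$ as an honest layered tower: I group the derivatives $\del_{t_i}\mcQ_j$ by degree and place them above $\mcQ$ in decreasing order of degree, so that the new tower has degree $\le d$, height $\le (k+1)H$, and mirrors the layering of the ambient multi-linear tower. Second, and more delicate, the statement samples $t_i$ from $Z(\mcQ(1))^k$, while Lemma \ref{Derivatives} delivers an almost-sure conclusion over the projection $Z(\mcQ(N))_{[k]}$; I would bridge these by combining Lemma \ref{atom-size-poly} (available at degree $\le d$ by the outer induction hypothesis on $d$ for Theorem \ref{Bias-rank-poly}) with a Fubini-style argument to show that the projection and the product $Z(\mcQ(1))^k$ agree up to a $q^{-s}$ fraction, so an almost-sure statement with respect to one transfers to the other after a harmless enlargement of $A$ and $B$.
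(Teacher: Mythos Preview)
Your approach has the right idea—reduce to the multilinear derivatives lemma via polarization—but the specific construction $\mcQ(N)$ with $N = k+d$ runs into two difficulties, one of which is a genuine obstruction.

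The more serious one is your proposed bridge between sampling sets. The projection $Z(\mcQ(N))_{[k]}$ equals $Z(\mcQ(k))$, which enforces all the cross-equations $\bar{Q}(t_i,t_j) = 0$ for degree-$2$ polynomials $Q \in \mcQ$, and their higher analogues. This is genuinely much smaller than $Z(\mcQ(1))^k$, which only enforces the degree-$1$ equations of $\mcQ$ on each $t_j$ separately. For instance, if $\mcQ$ consists of a single degree-$2$ polynomial and nothing else, then $Z(\mcQ(1))^k = V^k$ while $Z(\mcQ(k))$ has density $\approx q^{-\binom{k}{2}}$ in $V^k$. So the two sets do not ``agree up to a $q^{-s}$ fraction,'' and an almost-sure statement over $Z(\mcQ(k))$ says nothing about points of $Z(\mcQ(1))^k \setminus Z(\mcQ(k))$. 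The bridge you propose via Lemma~\ref{atom-size-poly} cannot close this gap.

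A second issue is that $\mcQ(N)$ has height of order $H\cdot N^d$, growing with $k$, while the constants in Lemma~\ref{Derivatives} depend on the height in an unspecified way. Even granting polynomial dependence there, you would not recover a bound of the form $Ak^B$ with $A,B$ independent of $k$.

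The paper sidesteps both problems by working instead with $\mcQ(d+1)$ and then applying the coordinate-multiplication of Claim~\ref{mult-coord} with vector $(1,\ldots,1,k)$. The resulting tower $\mcQ' = (\mcQ(d+1))^{\otimes(1,\ldots,1,k)}$ on $V^d \times V^k$ has height $\le 2^{d+1}H$ independent of $k$; its projection to the last $k$ coordinates is exactly $Z(\mcQ(1))^k$ (only the degree-$1$ layers land there, one copy per new coordinate); and fixing $t_1,\ldots,t_k$ yields precisely $(\mcQ_{t_1}\cup\ldots\cup\mcQ_{t_k})(d)$, with no extraneous higher-order mixed derivatives to prune. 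The $k$-dependence is absorbed entirely by the factor $k^B$ in Claim~\ref{mult-coord}.
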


	\begin{proof}
		By Claim \ref{poly-to-multilinear}, it is enough that $ (\mcQ_{t_1}\cup\ldots\cup\mcQ_{t_k} )(d) $ is $ (C2^{dD},D,s) $-regular. By the same claim, $ \mcQ(d+1) $ is $ (Ek^F,F,s) $-regular as long as $ A,B $ are large. By Claim \ref{mult-coord} applied with multiplicity vector $ l = (1,\ldots,1,k), $ the tower $ \mcQ' = (\mcQ(d+1))^{\otimes l} $ is $ (E,F,s) $-regular. Then by Lemma \ref{Derivatives} for $ E,F $ sufficiently large, it follows that for $ q^{-s} $-a.e. $t_1,\ldots,t_k\in Z(\mcQ(1)),$ the tower $ \mcQ'(t_1,\ldots,t_k) $ is $ (C,D,s) $-regular. $ \mcQ'(t_1,\ldots,t_k) = (\mcQ_{t_1}\cup\ldots\cup\mcQ_{t_k} )(d), $ so we are done.  
	\end{proof}
	
	\begin{corollary}[Fubini]\label{fubini-poly}
			There are constants $ A(d,H),B(d,H) $ such that if $ \mcQ $ is an $ (A,B,s) $-regular polynomial tower of type $ (d,H) $ and $ f $ is a one-bounded function, then we have 
			\[ 
			\left| \mE_{x,y\in Z(\mcQ)} f(x,y) - \mE_{t\in Z(\mcQ(1))} \mE_{x\in Z(\mcQ_t)} f(x,x+t) \right| \le q^{-s}.
			 \]
	\end{corollary}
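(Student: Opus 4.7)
The plan is to change variables $y = x+t$ on the left-hand side and then reduce the statement to a comparison of three variety sizes. Writing $t = y-x$ and using that the conditions $x, x+t \in Z(\mcQ)$ are together equivalent to $x \in Z(\mcQ_t),$ I would begin with
\[
\mE_{x,y\in Z(\mcQ)} f(x,y) = \frac{1}{|Z(\mcQ)|^2}\sum_{t\in V} |Z(\mcQ_t)|\cdot \mE_{x\in Z(\mcQ_t)} f(x,x+t).
\]
For each linear $L\in\mcQ,$ $\del_t L = \bar L(t)$ is a constant in $x,$ so $Z(\mcQ_t)=\emptyset$ unless $t \in Z(\mcQ(1))$. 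Restricting the outer sum accordingly yields the exact identity
\[
\mE_{x,y\in Z(\mcQ)} f(x,y) = \mE_{t\in Z(\mcQ(1))} \frac{|Z(\mcQ(1))|\cdot|Z(\mcQ_t)|}{|Z(\mcQ)|^2}\,\mE_{x\in Z(\mcQ_t)} f(x,x+t).
\]

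Next, I would bound the weight $w(t) := |Z(\mcQ(1))|\cdot|Z(\mcQ_t)|/|Z(\mcQ)|^2$ by $1+O(q^{-s})$ for $q^{-s}$-almost every $t\in Z(\mcQ(1))$. Choosing $A,B$ large enough in terms of auxiliary constants $C,D,$ Lemma \ref{derivatives-poly} applied with $k=1$ guarantees that $\mcQ_t$ is $(C,D,s)$-regular for a.e.\ $t\in Z(\mcQ(1))$, and $C,D$ are then chosen large enough that Lemma \ref{atom-size-poly} applies to each of $\mcQ$, $\mcQ(1)$, and $\mcQ_t$ with error $q^{-s}$. Writing $m = \dim\mcQ$ and $r_1 = \dim\mcQ(1)$ (the number of linear polynomials in $\mcQ$), the crucial arithmetic is $\dim\mcQ_t = 2m - r_1$: for $t\in Z(\mcQ(1))$ the $r_1$ derivatives $\del_t L$ of linear $L\in\mcQ$ vanish identically, so only the $m-r_1$ derivatives of higher-degree elements contribute as new equations. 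Plugging the three atom-size estimates into $w(t),$ the exponents of $q$ balance ($r_1 + (2m-r_1) - 2m = 0$) and give $w(t) = 1 + O(q^{-s})$. Combined with $|f|\le 1$ and the $q^{-s}$ mass of the exceptional set of $t$'s, this delivers the claimed bound.

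The main obstacle is justifying the dimension count $\dim\mcQ_t = 2m - r_1$, i.e., that for typical $t$ the higher-degree derivatives $\del_t Q$ are relatively independent of $\mcQ$ and of each other. This is exactly what the $(C,D,s)$-regularity of $\mcQ_t$ furnished by Lemma \ref{derivatives-poly} encodes: any unexpected relative dependence would collapse the relative rank of some layer of $\mcQ_t$ below the threshold in the definition of regularity. Everything else is bookkeeping, ensuring the constants $A,B$ in the hypothesis dominate the corresponding constants in the derivatives and atom-size lemmas so that the error terms compound to at most a single $q^{-s}.$
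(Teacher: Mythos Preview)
Your approach is correct and is precisely the polynomial adaptation of the proof of Lemma~\ref{Fubini}, which is exactly what the paper intends (its proof reads ``Identical to the proof of Lemma~\ref{Fubini}''). One small point: the phrase ``the $q^{-s}$ mass of the exceptional set of $t$'s'' is not by itself enough to control the bad contribution, since $w(t)$ could be large there; you also need $w(t)\ge 0$ and $\mE_{t\in Z(\mcQ(1))} w(t)=1$ (immediate from your exact identity with $f\equiv 1$), after which the splitting argument from the proof of Lemma~\ref{Fubini} finishes the bound.
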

	
	\begin{proof}
			Identical to the proof of Lemma \ref{Fubini}.
	\end{proof}
	
		For a function $ f:V\to\mC, $ and $ t\in V, $ denote the (multiplicative) derivative by $ D_t f(x) = f(x+t)\bar f(x). $ For $ t = (t_1\ldots t_k)\in V^k, $ write $ \mcQ_t = (\ldots(\mcQ_{t_1})_{t_2})\ldots)_{t_k}. $
	
	\begin{lemma}[Cauchy-Shcwarz]\label{CS}
			There exist $ A(d,H,k),B(d,H,k) $ such that if $ \mcQ $ is an $ (A,B,s) $-regular polynomial tower of degree $ \le d $ and height $ \le H, $ and $ f:V\to\mC $ is a one-bounded function, then
			\[
			| \mE_{x\in Z(\mcQ)} f(x) |^{2^k} \le Re \left( \mE_{t\in Z(\mcQ(k))}\mE_{x\in Z(\mcQ_t)} D_{t_1}\ldots D_{t_k} f(x) \right)  +q^{-s}.
			 \]
		\end{lemma}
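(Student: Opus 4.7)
The plan is to induct on $k$, using Corollary \ref{fubini-poly} as the basic move and Cauchy--Schwarz on the induction parameter; this is the analogue of the standard Gowers--Cauchy--Schwarz inequality in the relative setting.

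\emph{Base case} $k=1$. Expand
\[
|\mE_{x\in Z(\mcQ)} f(x)|^2 = \mE_{x,y\in Z(\mcQ)} f(x)\bar f(y),
\]
a nonnegative real number. Apply Corollary \ref{fubini-poly} to the one-bounded $g(x,y) := f(x)\bar f(y)$ and take real parts. Since $f(x)\bar f(x+t) = \overline{\del_t f(x)}$ and $Re(\bar z) = Re(z)$, the $k=1$ inequality follows.

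\emph{Inductive step} $k\to k+1$. Assuming the bound at level $k$ (with a larger error exponent $s_1$ chosen as needed), we obtain
\[
|\mE_{x\in Z(\mcQ)} f(x)|^{2^k} \le Re(A_k) + q^{-s_1}, \qquad A_k := \mE_{t\in Z(\mcQ(k))}\mE_{x\in Z(\mcQ_t)} G_t(x),
\]
where $G_t := \del_{t_1}\cdots\del_{t_k} f$ is one-bounded. Square and absorb the cross term using $|A_k|\le 1$ to get $|\mE f|^{2^{k+1}} \le |A_k|^2 + 3q^{-s_1}$; then Cauchy--Schwarz in $t$ gives $|A_k|^2 \le \mE_{t\in Z(\mcQ(k))} |\mE_{x\in Z(\mcQ_t)} G_t(x)|^2$. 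Using Claim \ref{poly-to-multilinear} together with Lemma \ref{Derivatives} applied to the regular multi-linear tower $\mcQ(k+d)$ on $V^{k+d}$ (restricted to its first $k$ coordinates), for $q^{-s_1}$-a.e.\ $t\in Z(\mcQ(k))$ the polynomial tower $\mcQ_t$ is $(C,D,s_2)$-regular with $C,D$ large enough to apply the base case to $G_t$ on $Z(\mcQ_t)$. This yields
\[
|\mE_{x\in Z(\mcQ_t)} G_t(x)|^2 \le Re\bigl(\mE_{u\in Z(\mcQ_t(1))}\mE_{x\in Z((\mcQ_t)_u)} \del_u G_t(x)\bigr) + q^{-s_2}.
\]
Since $(\mcQ_t)_u = \mcQ_{(t,u)}$ and the multiplicative derivative is symmetric, $\del_u G_t = \del_{t_1}\cdots\del_{t_k}\del_u f$, so averaging over $t$ produces a triple expectation to be matched with the level-$(k+1)$ statement.

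\emph{The main obstacle} is this final identification: showing that sampling $(t,u)$ with $t\in Z(\mcQ(k))$ and $u\in Z(\mcQ_t(1))$ is the same, up to $q^{-s}$, as sampling $t'=(t,u)\in Z(\mcQ(k+1))$. The key observation is that $\mcQ_t(1)$ is built from the \emph{polarizations} of the degree-$1$ layer of $\mcQ_t$, so the constant terms of the difference polynomials $\del_I Q$ with $|I|=\deg Q-1$ are dropped. The resulting multi-linear constraints on $u$ are precisely $\bar Q(t_{E'}, u)=0$ for $E'\subset[k]$, $|E'|=\deg Q-1$, which are exactly the polynomials in $\mcQ(k+1)$ that involve the $(k+1)$-st coordinate; the remaining polynomials of $\mcQ(k+1)$ supply the constraint $t\in Z(\mcQ(k))$. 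Thus $Z(\mcQ(k+1))$ fibers over $Z(\mcQ(k))$ with fibers equal to $Z(\mcQ_t(1))$, and comparable fiber sizes follow from Lemma \ref{Atom-size} applied to $\mcQ(k+1)$, which is regular by Claim \ref{poly-to-multilinear} once $A,B$ are chosen large enough depending on $d,H,k$. This closes the induction.
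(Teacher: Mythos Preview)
Your proof is correct and follows essentially the same inductive Cauchy--Schwarz/Fubini scheme as the paper; the only difference is the order of operations---the paper takes one derivative step first (via Corollary \ref{fubini-poly}) and then applies the induction hypothesis at level $k-1$ on $\mcQ_t$, whereas you apply the induction hypothesis at level $k$ first and then take one more step. Your ``main obstacle'' identification $Z(\mcQ_t(1)) = $ fiber of $Z(\mcQ(k+1))$ over $t\in Z(\mcQ(k))$ is exactly the paper's identity $\mcQ_t(k-1) = (\mcQ(k))(t)$ read in the other direction, and your regularity argument for $\mcQ_t$ via $\mcQ(k+d)$ and Lemma \ref{Derivatives} is precisely how Lemma \ref{derivatives-poly} is proved.
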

		
	\begin{proof}
			By induction on $ k. $ For $ k = 1 $ this follows from Corollary \ref{fubini-poly}. Suppose it holds up to $ k-1. $ Then if $ A,B $ are sufficiently large we can apply Corollary \ref{fubini-poly} to get
			\begin{multline*}
				| \mE_{x\in Z(\mcQ)} f(x) |^{2^k} = |\mE_{x,y\in Z(\mcQ)} f(x) \bar f(y)|^{2^{k-1}} \le | \mE_{t\in Z(\mcQ(1))} \mE_{x\in Z(\mcQ_t)} D_t f(x) |^{2^{k-1}} + q^{-4s} \\
				\le  \mE_{t\in Z(\mcQ(1))} |\mE_{x\in Z(\mcQ_t)} D_t f(x)|^{2^{k-1}} + q^{-4s}.   
			\end{multline*} 
		If $ A,B $ are sufficiently large, then by Lemma \ref{derivatives-poly} the inductive hypothesis applies to $ q^{-4s} $-a.e. $ \mcQ_t. $ Therefore, 
		\begin{multline*}
			\mE_{t\in Z(\mcQ(1))} |\mE_{x\in Z(\mcQ_t)} D_t f(x)|^{2^{k-1}} \\
			\le Re \left( \mE_{t\in Z(\mcQ(1))} \mE_{s\in Z(\mcQ_t(k-1))} \mE_{x\in Z((\mcQ_t)_s)} D_sD_t f(x) \right)  +q^{-2s}.
		\end{multline*}

		Finally, using the fact that $ \mcQ_t(k-1) = (\mcQ(k))(t) $ and applying Lemma \ref{Fubini} for $ \mcQ(k), $ we get that
		\begin{multline*}
				Re \left( \mE_{t\in Z(\mcQ(1))} \mE_{s\in Z(\mcQ_t(k-1))} \mE_{x\in Z(\mcQ_{(t,s)})} D_sD_t f(x) \right) \\
				\le  Re \left( \mE_{t\in Z(\mcQ(k))}\mE_{x\in Z(\mcQ_t)} D_{t_1}\ldots D_{t_k} f(x) \right) + q^{-2s}, 
		\end{multline*} 
		 which completes the inductive step.
	\end{proof}
	
	\begin{corollary}\label{lower-tower}
		Theorem \ref{Bias-rank-poly} holds for polynomials of degree $ \le d+1 $ on towers of degree $ \le d. $
	\end{corollary}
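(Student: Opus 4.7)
The plan is to reduce to the multi-linear Theorem \ref{Bias-Rank} via polarization, using the Cauchy--Schwarz chain to convert the polynomial bias on $Z(\mcQ)$ into a multi-linear bias on $Z(\mcQ(d+1))$, and then using restriction to the diagonal to turn the resulting multi-linear partition-rank presentation back into a polynomial relative-rank presentation.

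Fix $P$ of degree $\le d+1$ with $bias_{Z(\mcQ)}(P)\ge q^{-s}$, where $\mcQ$ is an $(A,B,s)$-regular polynomial tower of degree $\le d$ and height $\le H$, with $A,B$ to be chosen. If $\deg P\le d$, the inductive hypothesis at degree $d$ finishes the proof, so we may assume $\deg P=d+1$; since $d+1<char(\mF)$, the polarization $\bar{\tilde P}:V^{d+1}\to\mF$ is a well-defined multi-linear form with $\bar{\tilde P}(x,\ldots,x)=\tilde P(x)$. Apply Lemma \ref{CS} with $k=d+1$ to $f=\chi\circ P$. The iterated multiplicative derivative $\del_{t_1}\cdots\del_{t_{d+1}}\chi(P(x))$ equals $\chi$ of the iterated additive discrete derivative of $P$, which, by the polarization identity and $\deg P\le d+1$, is independent of $x$ and equals $(d+1)!\,\bar{\tilde P}(t_1,\ldots,t_{d+1})$. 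The inner $x$-average therefore collapses, and by choosing $A,B$ large enough that the error exponent $s'$ in Lemma \ref{CS} satisfies $s'\ge 2^{d+1}s+2$, we deduce
\[
bias_{Z(\mcQ(d+1))}\bigl((d+1)!\,\bar{\tilde P}\bigr)\ge q^{-s''},\qquad s''=2^{d+1}s+1.
\]
By Claim \ref{poly-to-multilinear}, with $A,B$ taken large, the tower $\mcQ(d+1)$ is an $(A',B',s'')$-regular multi-linear tower on $V^{d+1}$ of degree $\le d$ and height $\le 2^{d+1}H$. Applying Theorem \ref{Bias-Rank} at parameters $(d+1,2^{d+1}H)$ to $(d+1)!\,\bar{\tilde P}$ yields a relative partition-rank presentation
\[
(d+1)!\,\bar{\tilde P}(x_{[d+1]})=\sum_{Q\in\tilde\mcQ,\,|J|=\deg Q}c_{Q,J}\,\bar Q(x_J)\,R_{Q,J}(x_{[d+1]\setminus J})+\sum_{k=1}^{r}S_k(x_{I_k})T_k(x_{[d+1]\setminus I_k}),
\]
with $R_{Q,J},S_k,T_k$ multi-linear, $\emptyset\neq I_k\subsetneq[d+1]$, and $r\le A'''(1+s)^{B'''}$.

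To conclude, restrict to the diagonal $x_1=\cdots=x_{d+1}=x$. The polarization identity gives $\bar Q(x,\ldots,x)=\tilde Q(x)$, each $R_{Q,J}(x,\ldots,x)$ becomes a polynomial of degree $\le d+1-\deg Q$, and each $S_k(x,\ldots,x)$, $T_k(x,\ldots,x)$ has positive degree strictly less than $d+1$. Dividing by $(d+1)!\ne 0$ produces a valid relative-rank presentation
\[
\tilde P(x)=\sum_{Q\in\tilde\mcQ}\tilde Q(x)\,\widetilde R_Q(x)+\sum_{k=1}^{r}\widetilde U_k(x)\,\widetilde V_k(x),
\]
satisfying $\deg\widetilde R_Q+\deg Q\le\deg P$ and witnessing rank $\le r$. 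Hence $rk_\mcQ(P)=rk_{\tilde\mcQ}(\tilde P)\le A'''(1+s)^{B'''}$. The only genuine difficulty is the bookkeeping: the growth from $s$ to $s''=O(2^{d+1}s)$ and the amplification of regularity constants through Claim \ref{poly-to-multilinear} must be tracked so that the final $A,B$ come out as polynomial functions of the constants supplied by Lemma \ref{CS}, Claim \ref{poly-to-multilinear}, and Theorem \ref{Bias-Rank}; the polarization identity and diagonal pullback are the conceptual heart of the argument.
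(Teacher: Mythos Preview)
Your proof is correct and follows essentially the same route as the paper: apply Lemma~\ref{CS} with $k=d+1$ to pass from polynomial bias on $Z(\mcQ)$ to multi-linear bias of $\bar{\tilde P}$ on $Z(\mcQ(d+1))$, invoke Claim~\ref{poly-to-multilinear} for regularity, apply the multi-linear Theorem~\ref{Bias-Rank}, and restrict to the diagonal. Your version is simply more explicit about the $(d+1)!$ factor and the exponent loss $s\mapsto 2^{d+1}s$ (the paper writes $q^{-2s}$, which is a harmless looseness), and spells out the diagonal restriction of the partition-rank presentation.
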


	\begin{proof}
		When $ \deg(P) \le d $ we are already assuming this holds, so we deal with the case $ \deg(P) =d+1. $
		If $ bias_{Z(\mcQ)} (P) \ge q^{-s} $ then by Lemma \ref{CS} we get that $ bias_{Z(\mcQ(d+1))} (\bar P) \ge q^{-2s}. $ By Claim \ref{poly-to-multilinear}, the tower $ \mcQ(d+1) $ is regular, so we can apply Theorem \ref{Bias-Rank} to deduce that $ prk_{\mcQ(d+1)} (\bar P) \le A(1+s)^B. $ Plugging in the diagonal, we get that $ rk_\mcQ (P) \le A(1+s)^B. $
	\end{proof}
	
		To deal with towers of degree $ d+1, $ we introduce an auxiliary parameter like we did in the proof of Theorem \ref{Bias-Rank}:
		
		\begin{theorem}[Relative bias implies relative low rank]\label{bias-rank-poly-aux}
			There exist  constants $A(d,H,l), B(d,H,l)$ such that if $\mcQ$ is an $(A,B,s)$-regular tower of degree $\le d+1 < char(\mF)$ and height $\le H$ such that $ \mcQ_{\le H-l} $ is of degree $ \le d, $ and $P$ is a polynomial of degree $\le d+1$ with
			\[
			bias_{Z(\mcQ)}(P) \ge q^{-s}.
			\]
			then
			\[
			rk_\mcQ (P) \le A(1+s)^B. 
			\] 
		\end{theorem}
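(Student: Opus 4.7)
The plan is to prove Theorem \ref{bias-rank-poly-aux} by induction on the auxiliary parameter $l$, following exactly the same layer-peeling template used in Section \ref{reductions} for the multilinear analogue. The base case $l=0$ is precisely Corollary \ref{lower-tower}, since the assumption $\mcQ_{\le H-l}$ of degree $\le d$ becomes $\mcQ$ of degree $\le d$. For the inductive step, assume the theorem for $(d+1,H,l)$ and consider $\mcQ$ of type $(d+1,H,l+1)$ with $bias_{Z(\mcQ)}(P)\ge q^{-s}$. If $\mcQ$ contains no layer of degree $d+1$, apply Corollary \ref{lower-tower} directly. Otherwise, let $T$ be the largest index with $\deg(\mcQ_T)=d+1$; since $\mcQ_{\le H-l-1}$ is of degree $\le d$, we have $T\ge H-l$. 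Form $\mcQ_{\neq T}$, whose height is $H-1$ and whose bottom $H-1-l$ layers equal $\mcQ_{\le H-l-1}$ (hence of degree $\le d$), so $\mcQ_{\neq T}$ is of type $(d+1,H,l)$. Regularity of $\mcQ_{\neq T}$ follows from that of $\mcQ$ because, for every remaining layer $i$, $(\mcQ_{\neq T})_{<i}\subset\mcQ_{<i}$, so the relative rank only increases.

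The main computation is a Fourier peel of layer $T$:
\[
\mE_{x\in Z(\mcQ)}\chi(P(x)) \;=\; \frac{|Z(\mcQ_{\neq T})|}{|Z(\mcQ)|}\,q^{-m_T}\sum_{a\in\mF^{m_T}} \mE_{x\in Z(\mcQ_{\neq T})}\chi\bigl(P(x)+a\cdot\mcQ_T(x)\bigr).
\]
Lemma \ref{atom-size-poly}, applied both to $\mcQ$ and to $\mcQ_{\neq T}$ (which are regular for sufficiently large $A,B$), bounds the prefactor by an absolute constant. Choose $b\in\mF^{m_T}$ minimizing $rk_{\mcQ_{\neq T}}(P+b\cdot\mcQ_T)$. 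By subadditivity of relative rank, for any $a\ne b$,
\[
rk_{\mcQ_{\neq T}}\bigl((a-b)\cdot\mcQ_T\bigr) \;\le\; 2\,rk_{\mcQ_{\neq T}}(P+a\cdot\mcQ_T).
\]
The regularity of $\mcQ$ yields $rk_{\mcQ_{<T}}\!\bigl((a-b)\cdot\mcQ_T\bigr)>A(m_T+m_{T+1}+\dots+m_H+s)^B$. The extra modifiers available in $\mcQ_{\neq T}\setminus\mcQ_{<T}=\mcQ_{>T}$ can reduce the rank by at most $|\mcQ_{>T}|=m_{T+1}+\dots+m_H$, because each term $R_QQ$ (with $Q\in\mcQ_{>T}$) contributes Schmidt rank at most one. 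Hence, for $A,B$ large,
\[
rk_{\mcQ_{\neq T}}(P+a\cdot\mcQ_T) \;\ge\; \tfrac{A}{4}(m_T+s)^B\quad(a\ne b).
\]
Applying the inductive hypothesis $(d+1,H,l)$ to $\mcQ_{\neq T}$ with parameter $m_T+4s$, this rank lower bound forces $bias_{Z(\mcQ_{\neq T})}(P+a\cdot\mcQ_T)<q^{-(m_T+4s)}$ for $a\ne b$. Combining with the Fourier identity,
\[
q^{-s}\le \mathrm{const}\cdot q^{-m_T}\Bigl(bias_{Z(\mcQ_{\neq T})}(P+b\cdot\mcQ_T)+(q^{m_T}-1)q^{-(m_T+4s)}\Bigr),
\]
which yields $bias_{Z(\mcQ_{\neq T})}(P+b\cdot\mcQ_T)\ge q^{-s-O(1)}$. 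A second application of the inductive hypothesis to $P+b\cdot\mcQ_T$ on $\mcQ_{\neq T}$ gives $rk_{\mcQ_{\neq T}}(P+b\cdot\mcQ_T)\le A(1+s)^B$. Since $b\cdot\mcQ_T$ is an admissible modifier in the definition of $rk_\mcQ(P)$ (as $\deg(b_i)+\deg(Q_{T,i})=d+1=\deg(P)$), we conclude $rk_\mcQ(P)\le A(1+s)^B$, completing the induction.

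The main delicate point is step (7): controlling how much the relative rank of $(a-b)\cdot\mcQ_T$ can drop when we enlarge the ambient collection from $\mcQ_{<T}$ to $\mcQ_{\neq T}$. This requires the observation above that each $Q\in\mcQ_{>T}$ contributes Schmidt rank at most one to the modification, making the savings bounded by the dimension of the upper layers and hence absorbed by the regularity slack built into the $(A,B,s)$-regularity of $\mcQ$. The rest is bookkeeping — verifying that after removal of a non-terminal layer the type $(d+1,H-1,l)$ is preserved, and chaining the polynomial atom-size lemma \ref{atom-size-poly} with the inductive biases — all of which go through exactly as in the multilinear reduction of Section \ref{reductions}, once one checks that Lemma \ref{atom-size-poly} and Corollary \ref{fubini-poly} supply the same analytic bounds in the polynomial setting that Lemmas \ref{Atom-size} and \ref{Fubini} supplied in the multilinear setting.
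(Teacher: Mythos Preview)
Your proof is correct and follows the same layer-peeling template as the paper. There is, however, a genuine difference in which layer gets peeled. The paper's own proof of Theorem~\ref{bias-rank-poly-aux} always removes the \emph{top} layer $\mcQ_H$ and works with the initial truncation $\mcQ_{<H}$; because $\mcQ_{<H}$ is exactly the collection appearing in the regularity hypothesis for layer $H$, the bound $rk_{\mcQ_{<H}}(a\cdot\mcQ_H)>A(m_H+s)^B$ is immediate and no ``step~(7)'' bookkeeping is needed. You instead peel the \emph{topmost degree-$(d{+}1)$} layer $\mcQ_T$ (exactly as in the multilinear reduction of Section~\ref{reductions}) and therefore must control how much the relative rank of $(a-b)\cdot\mcQ_T$ can drop when the ambient collection is enlarged from $\mcQ_{<T}$ to $\mcQ_{\neq T}$; your observation that each $Q\in\mcQ_{>T}$ has degree $\le d$ and hence contributes Schmidt rank at most one to the modification handles this cleanly.

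What your choice buys is robustness in a case the paper's polynomial argument glosses over: if $\deg(\mcQ_H)\le d<\deg(P)=d{+}1$, then $rk_{\mcQ_{<H}}(P+a\cdot\mcQ_H)=rk_{\mcQ_{<H}}(P)$ is independent of $a$, so the paper's ``at most one $a_0$'' claim does not follow from subadditivity as written (all $a$ give the same rank). Your insistence that $\deg(\mcQ_T)=d{+}1$ guarantees $\deg(\mcQ_T)\ge\deg(P)$, so the subadditivity argument (or, when $\deg(P)<d{+}1$, the forced choice $b=0$) always goes through. In short: the paper trades your step~(7) for a simpler truncation but tacitly needs the top layer to have top degree; your version is a bit longer but self-contained in this respect.
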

	
		A tower $ \mcQ $ as above is called a tower of type $ (d+1,H,l) $ for short. We have just seen this theorem holds for towers of type $ (d+1,H,0). $ Theorem \ref{Bias-rank-poly} is for towers of type $ (d+1,H,H). $
		
		\begin{proof}[Proof of Theorem \ref{bias-rank-poly-aux}]
			By induction on $ l. $ We have just seen the base case $ l=0. $ Suppose the theorem holds for towers of type $ (d+1,H,l) $ and $ \mcQ $ is now of type $ (d+1,H,l+1). $ The inductive assumption implies Lemma \ref{atom-size-poly} for $ \mcQ $ and $ \mcQ_{<H} $ (as in the proof of Lemma \ref{Atom-size}), so we get 
			\[ 
			\sum_{a\in\mF^{m_H}} bias_{Z(\mcQ_{<H})} (P+a\cdot \mcQ_H) \ge q^{-2s}.
			 \] 
			By sub-additivity of relative rank, there is at most a single $ a_0\in\mF^{m_H} $ with $ rk_{\mcQ_{<H}} (P+a\cdot \mcQ_H) \le A/2(m_H+s)^B, $ and if $ \deg(P) < \deg(\mcQ_H) $ then $ a_0 = 0. $ By the theorem applied to $ \mcQ_{<H} $ we get that for all $ a\neq a_0, $
			\[ 
			bias_{Z(\mcQ_{<H})} (P+a\cdot \mcQ_H) \le q^{-4s-m_H},
			 \]
			so $ bias_{Z(\mcQ_{<H})} (P+a_0\cdot \mcQ_H) \ge q^{-4s}. $ This gives
			\[ 
			rk_\mcQ (P) \le rk_{Z(\mcQ_{<H})}(P+a_0\cdot \mcQ_H) \le A(1+s)^B.
			 \]
		\end{proof}

	We are also ready to prove Theorem \ref{null-poly}:
	
	\begin{theorem}[Robust Nullstellensatz for regular collections]
		There exist constants $A(d,H),B(d,H),s(d)$ such that if $\mcQ = (Q_{i,j})_{i\in[h],j\in[m_i]} $ is an $(A,B,s)$-regular polynomial tower of deg $\le d < char(\mF)$ and height $ \le H, $ and $P$  is a polynomial of degree $ \le d $ which vanishes $ q^{-s} $-a.e. on $ Z(\mcQ), $ then there exist polynomials $ R_{i,j} $ satisfying $ \deg(R_{i,j}) + \deg(Q_{i,j}) \le \deg(P) $ and 
		\[ 
		P = \sum_{i,j} R_{i,j}Q_{i,j}.
		\] 
	\end{theorem}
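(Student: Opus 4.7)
The plan is to prove the theorem by induction on $k = \deg(P)$, reducing at each step to the multi-linear Nullstellensatz (Theorem \ref{Nullstellensatz}) applied to the polarization of the top homogeneous part of $P$. The base case $k=0$ is immediate: $P$ is a constant, and since $Z(\mcQ) \neq \emptyset$ by Lemma \ref{atom-size-poly}, the hypothesis forces $P = 0$.

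For the inductive step, let $\tilde P$ be the top-degree homogeneous component of $P$ and let $\bar{\tilde P}\colon V^k \to \mF$ be its polarization (well-defined since $k < \mathrm{char}(\mF)$). I first transfer the a.e. vanishing from $P$ to $\bar{\tilde P}$. The hypothesis gives $\mE_{x\in Z(\mcQ)} \chi(cP(x)) \ge 1 - 2q^{-s}$ for every $c \in \mF^*$. Applying Lemma \ref{CS} to $f = \chi \circ (cP)$ and using the classical fact that the iterated additive derivative $\del_{t_1}\cdots\del_{t_k}(cP)$ equals $c \cdot k! \cdot \bar{\tilde P}(t_1,\dots,t_k)$ (constant in $x$), I obtain
\[
(1 - 2q^{-s})^{2^k} \le Re\left(\mE_{t \in Z(\mcQ(k))} \chi\bigl(c \cdot k!\, \bar{\tilde P}(t)\bigr)\right) + q^{-s}.
\]
Absorbing $k!$ into $c$ and using $|\mE \chi(c\bar{\tilde P})| \le 1$, the bound $Re(\mE \chi(c\bar{\tilde P})) \ge 1 - Cq^{-s}$ forces $|\mE \chi(c\bar{\tilde P}) - 1| = O(q^{-s/2})$. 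Summing over $c \in \mF$ via Fourier inversion then yields $\mP_{t \in Z(\mcQ(k))}(\bar{\tilde P}(t) = 0) \ge 1 - q^{-s'}$ for some $s'$ linear in $s$.

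Next, by Claim \ref{poly-to-multilinear}, $\mcQ(k)$ is a multi-linear $(A',B',s')$-regular tower of degree $\le d$ and height $\le 2^d H$ (assuming $A,B,s$ in the hypothesis are chosen large enough). Theorem \ref{Nullstellensatz} applies, giving
\[
\bar{\tilde P}(x_1,\dots,x_k) = \sum_{(i,j,E)} S_{i,j,E}(x_{[k]\setminus E})\, \bar Q_{i,j}(x_E),
\]
summed over layers $(i,j)$ with $\deg(Q_{i,j}) \le k$ and subsets $E \subset [k]$ of size $\deg(Q_{i,j})$. Specializing along the diagonal $x_1 = \cdots = x_k = x$ produces $\tilde P(x) = \sum_{i,j} R_{i,j}(x) \tilde Q_{i,j}(x)$ with $\deg(R_{i,j}) + \deg(Q_{i,j}) \le k$.

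Finally, set $P' = P - \sum_{i,j} R_{i,j} Q_{i,j}$. The top degree component of $\sum R_{i,j}Q_{i,j}$ is $\sum R_{i,j}\tilde Q_{i,j} = \tilde P$, so $\deg P' < k$; and since $\sum R_{i,j}Q_{i,j}$ vanishes identically on $Z(\mcQ)$, $P'$ vanishes $q^{-s}$-a.e. on $Z(\mcQ)$. The induction hypothesis applied to $P'$ (with the same $\mcQ$) yields $P' = \sum R'_{i,j}Q_{i,j}$ with the required degree bounds, and combining gives the desired presentation of $P$. The main obstacle is the quantitative transfer in Step 1: tracking how the constants degrade through Lemma \ref{CS}, the Fourier summation over characters, and Claim \ref{poly-to-multilinear}, while ensuring the resulting $s'$ still exceeds the threshold $s(d)$ required by Theorem \ref{Nullstellensatz} and that the overall constants $A(d,H), B(d,H), s(d)$ remain independent of $n$. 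The remaining steps are essentially algebraic and follow standard polarization bookkeeping.
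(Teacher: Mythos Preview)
Your proposal is correct and follows essentially the same approach as the paper: induction on $\deg(P)$, transfer of the a.e.\ vanishing to the polarization $\bar P$ on $Z(\mcQ(k))$ via Lemma~\ref{CS}, application of the multi-linear Nullstellensatz (Theorem~\ref{Nullstellensatz}), diagonal specialization, and induction on the remainder $P'$. Your character-sum and Fourier-inversion argument is a valid (and more explicit) fleshing-out of what the paper compresses into ``By lemmas~\ref{CS} and~\ref{Fubini}, $\bar P$ vanishes $q^{-s}$-a.e.\ on $Z(\mcQ(k))$.''
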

	
	\begin{proof}
		By induction on $ \deg(P). $ For constant $ P $ there is nothing to prove, since we know by  Lemma \ref{atom-size-poly} that $ Z(\mcQ) \neq \emptyset. $ Suppose the theorem holds up to degree $ k-1 $ and $ \deg(P) = k. $ Suppose $ P $ vanishes $ q^{-s} $-a.e. on $ Z(\mcQ). $ By Lemmas \ref{CS} and \ref{Fubini}, $ \bar P $ vanishes $ q^{-s} $-a.e. on $ Z(\mcQ(k)). $ By Theorem \ref{Nullstellensatz}, $ rk_{\mcQ(k)} (\bar P) = 0. $ Plugging in the diagonal, this implies that $ rk_{\tilde\mcQ} (\tilde P) = 0 $ so there exist $ R_{i,j} $ with $ \tilde P = \sum R_{i,j} \tilde Q_{i,j}. $ The polynomial $ P' = P-\sum R_{i,j} Q_{i,j} $ has $ \deg(P') \le k-1 $ and also vanishes $ q^{-s} $-a.e. on $ Z(\mcQ). $ Applying the inductive hypothesis finishes the proof.
	\end{proof}

	\appendix

\section{Adaptations of results involving rank to relative rank}	
We formulate and prove the relevant parts of the Theorems in \cite{kz-extension, kz-uniform}, replacing rank with relative rank. Let $\mF$ be a field. For an algebraic $\mF$-variety $\mX$ 
we write $X(\mF):=\mX (\mF)$. To simplify notations we often write $X$ instead of $X(\mF)$. In particular we write $V:=\mV (\mF)$ when $\mV$ is a vector space and write $\mF^N$ for $\mA^N(\mF)$. 

For a $\mF$-vector space $\mV$ 
we denote by $\mcP_{\bar d}(\mV)$ the algebraic variety of tuples polynomials $\bar P= (P_1, \ldots, P_c)$ on $\mV$ of degrees $\le \bar d$ and  by $\mcP_{\bar d}(V) $ the set of tuples of polynomials functions $\bar P:V\to \mF^c$ of degree $\leq \bar d$.
  We always assume that   $d < char(\mF)$, so the restriction  map $\mcP_{\bar d}(\mV)(\mF)\to \mcP_{\bar d}(V)$ is a bijection. 
  
  We recall some of the definitions in \cite{kz-extension}.
\begin{definition}\label{Af}
\leavevmode
\begin{enumerate}
\item For $m\geq 1$ and a $\mF$-vector space $V$, we denote by $\text{Aff} _m(\mV)$ the algebraic variety of affine maps $\phi :\mA ^m\to \mV$ and write $\text{Aff} _m(V) := \text{Aff} _m(\mV) (\mF)$.
 \item We define an algebraic morphism $\ti \kk _{\bar P}: \text{Aff} _m(\mV) \to \mcP_{\bar d}(\mA ^m)$ by $\ti \kk _{\bar P} (\phi):= \bar P \circ \phi$, and denote  by $\kk _{\bar P}$ the corresponding map  $\text{Aff} _m(V) \to \mcP_{\bar d}(\mF^m) $.
\end{enumerate}
\end{definition}

\begin{definition}\label{un}
 A map $\kk :M\to N$ between finite sets is {\em $\ep$-uniform}, where $\ep >0$, if for all $n\in N$ we have
$$\left||N||\kk ^{-1}(n) |-|M| \right|\leq \ep |M|.$$
\end{definition}

\begin{theorem}\label{A1}
 There exist constants $ A(d,h),B(d,h) $ such that for any $m, s\geq 1,$ any finite field $ \mF=\mF_q, $ and any $ (Am^B,B,s) $-regular tower $ \bar{P} $ of degree $ \le d <char(\mF) $ and height $ \le h, $ the map $\kk _{\bar P}:\text{Aff} _m(\mV) \to \mcP_{\bar d}(\mA ^m)$ is $q^{-s}$ uniform.
\end{theorem}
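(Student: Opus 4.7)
The plan is to adapt the Fourier-analytic proof of the analogous theorem for high Schmidt rank in \cite{kz-extension}, substituting the relative rank machinery developed in this paper at the key step. By Fourier inversion on the finite-dimensional $\mF$-vector space $\mcP_{\bar d}(\mF^m)$, for any $\bar Q \in \mcP_{\bar d}(\mF^m)$,
\[
\frac{|\kk_{\bar P}^{-1}(\bar Q)|}{|\text{Aff}_m(V)|} \;=\; \frac{1}{|\mcP_{\bar d}(\mF^m)|}\Bigl(1 + \sum_{L\neq 0}\chi(-L(\bar Q))\,\mE_\phi \chi(L(\bar P\circ\phi))\Bigr),
\]
where $L$ ranges over nonzero $\mF$-linear functionals on $\mcP_{\bar d}(\mF^m)$. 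Since the number of such $L$ is bounded by $q^{C(d)m^d}$ for some $C(d)$, the $q^{-s}$-uniformity statement reduces to establishing an upper bound of the form $q^{-s-C(d)m^d}$ on the exponential sum $|\mE_\phi \chi(L(\bar P\circ\phi))|$ for each fixed nonzero $L$.

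Parametrize $\phi \in \text{Aff}_m(\mV)$ by $(v_0,\ldots,v_m)\in V^{m+1}$ via $\phi(y)=v_0+\sum_j y_j v_j$. Then $\bar P\circ\phi$ is a tuple of polynomials in $y$ whose $y^\alpha$-coefficients are polynomials of degree $\leq d$ in $(v_0,\ldots,v_m)$. Consequently $R_L(v_0,\ldots,v_m):=L(\bar P\circ\phi)$ is a polynomial of degree $\leq d$ on $V^{m+1}$, and the exponential sum above is precisely $bias(R_L)$. By Theorem \ref{Bias-rank-poly} applied with empty tower (which recovers the Milicevic/Janzer bias implies rank bound), it suffices to show $rk(R_L) > A'(s+m^d)^{B'}$ for appropriate $A'(d,h),B'(d,h)$, and then sum over $L$.

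The heart of the matter is thus to lower bound the Schmidt rank of $R_L$ in terms of the $(Am^B,B,s)$-regularity of $\bar P$. The plan is to argue via polarization: the $y^\alpha$-coefficient of $P_i(v_0+y\cdot v)$ is (up to a multinomial factor) a value of the polarized form $\bar P_i$ evaluated at multisets drawn from $\{v_0,v_1,\ldots,v_m\}$, so a nonzero $L$ picks out a nontrivial linear combination of entries of the multilinear tower $\bar P(d)$ on $(V^{m+1})^d$ viewed on the affine subspace $\{(v_0+\sum y_j v_j)^d\}$. By Claim \ref{poly-to-multilinear}, the tower $\bar P(d)$ inherits regularity (losing only a $2^{dB}$ factor), so any such nontrivial combination has high relative partition rank; collapsing the multilinear variables back via the diagonal (and bookkeeping the contributions from each $v_j$) converts this into a lower bound on $rk(R_L)$. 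The main obstacle is this last translation: one must show that the particular linear combination of polarized entries produced by $L$ is genuinely nontrivial modulo lower layers of $\bar P(d)$, and that the diagonal-substitution step does not destroy the rank lower bound by more than a $d$-dependent constant. Once this is executed, choosing $A,B$ sufficiently large relative to $A',B'$ and the $m^d$ union-bound loss yields the required uniformity.
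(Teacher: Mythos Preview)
Your Fourier-inversion setup is correct and matches the paper's approach: the paper states that Theorem \ref{A1} follows (as in \cite{kz-extension}) from Theorem \ref{need}, whose proof amounts to showing that the coefficient polynomials $Q^{e,f}_{(l_1,\ldots,l_{d_e})}(w)$---the $x^\alpha$-coefficients of $P^{e,f}(w(x))$, viewed as polynomials in the affine data $w$---form a regular tower; the uniform fiber count then comes from Lemma \ref{atom-size-poly}, which is itself Fourier inversion plus the bias--rank inequality. Your $R_L$ is precisely a linear combination of these $Q$'s, so the two arguments coincide at this level.

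The real difference is in how the rank lower bound is established. You propose passing to the multilinear tower $\bar P(d)$ via Claim \ref{poly-to-multilinear}, arguing there, and then collapsing back along the diagonal---and you correctly flag that collapse as the main obstacle. The paper's argument is more direct and bypasses polarization entirely: for the coefficient $Q_{(l_1,\ldots,l_e)}$ of $x_{l_1}\cdots x_{l_e}$, restrict to the linear subspace $w_{l_1}=\cdots=w_{l_e}$ of the parameter space and observe by direct expansion that
\[
Q_{(l_1,\ldots,l_e)}(w)=e!\,P(w_{l_1})+R(w),
\]
with $R$ of lower degree in $w_{l_1}$; the same reduction applies simultaneously to all lower-degree polynomials in the tower. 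Since restriction to a subspace can only decrease relative rank, the relative rank of each layer of the $Q$-tower is bounded below by that of the corresponding layer of the original $\mcP$-tower, and one only needs the initial regularity to absorb the $\le m^{O(d)}$ blowup in the number of polynomials per layer. This single substitution replaces your polarization detour and sidesteps exactly the bookkeeping you identified as the obstacle.
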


As a corollary we obtain (as in \cite{kz-extension})

\begin{theorem}
For any  $m \geq 1$,  any algebraically closed  field $\mF$ of characteristic zero or $>d$,  any $\mF$-vector space $V$ and any $ (Am^B,B,2) $-regular tower $\bar P$ of degree $ \le d $ and height $ \le h, $ where $ A,B $ are the constants of Theorem \ref{A1}, we have
\begin{enumerate}
\item The map $\kk _{\bar P}$ is surjective,
\item All fibers of the morphism  $\ti \kk _{\bar P}$ are of the same dimension,
\item The morphism  $\ti \kk _{\bar P}$  is flat and
\item $\bar P\subset \mF[V^\vee]$ is a regular sequence.
\end{enumerate} 
\end{theorem}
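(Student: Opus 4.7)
The strategy is a standard specialization transfer from Theorem \ref{A1} (the finite-field input) to the algebraically closed setting, in the spirit of \cite{kz-extension}, adapted to the relative rank context.

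First I would set up specialization. The tower $\bar P$ and the morphism $\tilde\kk_{\bar P}$ are defined over a finitely generated $\mZ[1/d!]$-subalgebra $R\subset \mF$. The key observation is that the $(Am^B,B,2)$-regularity condition cuts out a Zariski-open subset of the coefficient space: each layer's relative rank lower bound translates into the non-vanishing of a constructible family of ``decomposition obstructions'', i.e.\ the image of an algebraic morphism sending tuples of potential low-rank presentation data (of bounded degree, modulo the lower layers) to their sums. Consequently, specializing at a closed point in this open locus with residue field $\mF_q$ of characteristic $>d$ and $q$ sufficiently large yields a tower $\bar P_q$ that remains $(Am^B,B,2)$-regular, and Theorem \ref{A1} applies to give $q^{-2}$-uniformity of $\kk_{\bar P_q}$.

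Next I would invoke Lang--Weil. For any geometrically integral $R$-variety $Y$, $|Y(\mF_q)| = q^{\dim Y}(1+O(q^{-1/2}))$. If two geometric fibers of $\tilde\kk_{\bar P}$ had different dimensions, then for a positive density of $n\in\mcP_{\bar d}(\mA^m)(\mF_q)$ the fibers of $\kk_{\bar P_q}$ would differ in size by a factor $\gg q^{-1/2}$, violating $q^{-2}$-uniformity for $q$ large. This proves (2). For (1): if the image of $\tilde\kk_{\bar P}$ were a proper constructible subset of $\mcP_{\bar d}(\mA^m)$, its complement would contain a nonempty Zariski-open set, which after spreading out would produce $\mF_q$-points outside the image of $\kk_{\bar P_q}$ for some large $q$, again contradicting Theorem \ref{A1}. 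Conclusion (3) then follows from miracle flatness, since $\mathrm{Aff}_m(\mV)$ and $\mcP_{\bar d}(\mA^m)$ are both affine spaces and hence regular. For (4), flatness combined with surjectivity forces the fiber of $\tilde\kk_{\bar P}$ over the zero polynomial to have the expected dimension; unwinding this description (the fiber parametrizes affine maps into $V(\bar P)$) yields $\mathrm{codim}_V V(\bar P) = c$, and since $\mF[V^\vee]$ is Cohen--Macaulay, the ideal $(P_1,\ldots,P_c)$ having codimension $c$ is equivalent to $\bar P$ being a regular sequence.

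The main obstacle is verifying the semicontinuity of relative rank under specialization. Unlike classical Schmidt rank, where the analogous statement is essentially definitional, relative rank is built up iteratively along a tower, so one has to check that each layer's rank bound, taken relative to the zero locus of the lower layers, is expressed as a Zariski-open condition on coefficients: concretely, one must realize ``$rk_{\mcQ_{<i}}(\mcQ_i)\leq r$'' as the image of a morphism between algebraic varieties of bounded complexity (parameterized by the multiplier polynomials $R_{i,j}$ of prescribed degrees). This is a routine but nontrivial unwinding. Once established, everything else is the standard Lang--Weil / miracle flatness / Cohen--Macaulay package and proceeds in parallel with \cite{kz-extension}.
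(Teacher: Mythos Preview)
Your proposal is correct and is precisely the route the paper has in mind: the paper gives no proof of this statement at all, merely presenting it as a corollary of Theorem \ref{A1} ``as in \cite{kz-extension}''. Your spreading-out / Lang--Weil / miracle flatness package is exactly the argument of \cite{kz-extension}, and your identification of the constructibility of the relative-rank condition (over $\mathrm{Spec}\,R$) as the one place where the adaptation to towers requires a moment's thought is on point. Note that what you actually need and use is constructibility, not semicontinuity: the locus of $\mathfrak p$ where some layer fails its rank bound is the image in $\mathrm{Spec}\,R$ of a finite-type $R$-scheme (parametrizing the witnesses $(a,R_j,\text{decomposition})$), hence constructible by Chevalley, and avoids the generic point by hypothesis.

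One small wrinkle in your derivation of (4): the fiber of $\tilde\kk_{\bar P}$ over the zero tuple parametrizes affine $m$-flats contained in $Z(\bar P)$, and the dimension of that space is not transparently $(m+1)\dim Z(\bar P)$, so the ``unwinding'' you sketch does not directly yield $\mathrm{codim}_V Z(\bar P)=c$. The clean fix is to bypass $\tilde\kk_{\bar P}$ for this step and instead apply Lang--Weil directly to $Z(\bar P)$: Lemma \ref{atom-size-poly} (the relative-rank size estimate) already gives $|Z(\bar P)(\mF_q)| = q^{n-c}(1+O(q^{-1}))$ for the specializations, forcing $\dim Z(\bar P)=n-c$, whence the Cohen--Macaulay argument concludes.
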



Let $\epsilon>0$ we say that a property holds for $\epsilon$-a.e. $s \in S$ if it holds for all but $(1-\epsilon)|S|$ of the elements in $S$. 

As in \cite{kz-extension},  Theorem \ref{A1} follows from the following result:

\begin{theorem}\label{need}
Let $d \ge 0$.   There exist $A(d,h),B(d,h)$ such that for any $s,m>0$ the following holds: Let $\mF=\mF_q$ be a finite field of characterisitc $>d$, let $\bar P$ be an $ (Am^B,B,s) $-regular tower of degree $ \le d $ and height $ \le h $ composed of polynomials $ (P_1,\ldots,P_c) $ of degrees $ (d_1,\ldots,d_c) $ on $V=\mF^n.$ Then: 
\begin{enumerate}
\item  For any collection of polynomials $\bar R=(R_i)_{ 1 \le i \le c}$, with $R_i:\mF^m \to \mF$  of degree $d_i$,  there exist an affine map $w:\mF^m \to \mF^n$ such that 
$\bar P(w(x))=\bar R(x)$.  Furthermore, if we denote by $n_{\bar R}$ the number of such affine maps, then for any $\bar R_1, \bar R_2$ as above $|1-n_{\bar R_1}/n_{\bar R_2}| <q^{-s}$. 
\item  If $\bar P$ is homogeneous, then for any homogeneous collection $\bar R=(R_i)_{ 1 \le i \le c}$,  $R_i:\mF^m \to \mF$  of degree $d_i$,  there exist a linear map $w:\mF^m \to \mF^n$ such that $\bar P(w(x))=\bar R(x)$.  Furthermore, if we denote by $n_{\bar R}$ the number of such linear maps, then for any $\bar R_1, \bar R_2$ as above $|1-n_{\bar R_1}/n_{\bar R_2}| <q^{-s}$.
\end{enumerate}
\end{theorem}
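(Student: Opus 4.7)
The plan is to convert the uniformity statement for $\kk_{\bar P}$ into a regularity statement for a derived polynomial tower on $V^{m+1}$, and then apply Lemma~\ref{atom-size-poly}. Write an affine map as $w(x) = w_0 + \sum_{j=1}^m x_j w_j$ with $w = (w_0,\ldots,w_m)\in V^{m+1}$, and expand each $P_i$ by polarization:
\[
P_i(w(x)) = \sum_{\beta \in \mZ_{\ge 0}^m,\ |\beta|\le d_i} Q_{i,\beta}(w)\, x^\beta,
\]
so each $Q_{i,\beta}$ is a polynomial of degree $d_i$ on $V^{m+1}$ whose top-degree homogeneous part equals $\binom{d_i}{\beta_1,\ldots,\beta_m,d_i-|\beta|}\,\bar{\tilde P_i}(w_0^{d_i-|\beta|},w_1^{\beta_1},\ldots,w_m^{\beta_m})$ (nonzero since $d<\mathrm{char}(\mF)$). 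Assemble $\mcS=\{Q_{i,\beta}\}$ into a tower inheriting the layer structure of $\bar P$: the layer of $\mcS$ corresponding to $\bar P_j$ consists of all $Q_{i,\beta}$ with $P_i\in\bar P_j$. Since constant shifts do not alter top-degree parts, the shifted tower $\mcS_{\bar R}:=\{Q_{i,\beta}-R_{i,\beta}\}$ has the same regularity as $\mcS$ for every $\bar R\in\mcP_{\bar d}(\mF^m)$, while $\kk_{\bar P}^{-1}(\bar R)=Z(\mcS_{\bar R})$. Granted the regularity of $\mcS$, Lemma~\ref{atom-size-poly} then yields
\[
|\kk_{\bar P}^{-1}(\bar R)|=\frac{|V|^{m+1}}{q^{\dim\mcS}}\bigl(1+O(q^{-s})\bigr),
\]
with $\dim\mcS=\sum_i\binom{m+d_i}{d_i}$ independent of $\bar R$; this simultaneously gives surjectivity of $\kk_{\bar P}$ and the claimed uniformity $|1-n_{\bar R_1}/n_{\bar R_2}|<q^{-s}$. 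Part (2) is proved identically, using $w_0=0$ and linear $w$, together with the homogeneous analogue of the polarization expansion.

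The crux is to show that $\mcS$ is $(A',B',s)$-regular for some $A'(A,B,d,h)$, $B'(A,B,d,h)$, starting from the $(Am^B,B,s)$-regularity of $\bar P$. The plan is a specialization argument. Suppose, for contradiction, that in some layer $j$ of $\mcS$ a non-trivial combination $\sum_{P_i\in\bar P_j,\,\beta} c_{i,\beta}\,Q_{i,\beta}$ has rank below the regularity threshold modulo $\mcS_{<j}$. Decompose the top-degree part of this combination by multi-degree in $(w_0,w_1,\ldots,w_m)$: each multi-graded component is a polarized expression in $\{\tilde P_i:P_i\in\bar P_j\}$. Specialize a chosen subset of the $w_t$ to $0$ to isolate a single multi-degree, then set the remaining $w_t$ to a common vector $w\in V$ and invoke the polarization identity (valid in characteristic $>d$) to recover a combination of the form $\sum_i \lambda_i \tilde P_i(w)$, with $\lambda_i$ explicit linear functions of the $c_{i,\beta}$. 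Specialization cannot increase rank, and the lower-layer ``modular'' terms $\sum R_{i',\beta'}(w)Q_{i',\beta'}(w)$ specialize into a combination involving only $\bar P_{<j}$; hence the $(Am^B,B,s)$-regularity of $\bar P$ at layer $j$ forces all the extracted $\lambda_i$'s to vanish. Running this over all multi-degrees $\beta$ forces all $c_{i,\beta}=0$, contradicting non-triviality.

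The main obstacle is the bookkeeping in the specialization step: each $Q_{i,\beta}$ mixes contributions from homogeneous parts $P_i^{(k)}$ for $|\beta|\le k\le d_i$, so extracting a clean relation on the $\tilde P_i$'s requires restricting to the top multi-degree and appealing to non-degeneracy of the symmetric polarization map in characteristic $>d$. The factor $m^B$ in the regularity hypothesis on $\bar P$ is natural and essentially tight: since $\dim\mcS \le \binom{m+d}{d}\cdot\dim\bar P = O(m^d)\cdot\dim\bar P$, the relative rank at each layer of $\bar P$ must exceed a polynomial in this enlarged dimension for the specialization to transport the regularity of $\bar P$ onto $\mcS$ in the form $rk_{\mcS_{<j}}(\mcS_j)>A'(m'_j+\ldots+m'_h+s)^{B'}$ demanded by the target regularity of $\mcS$.
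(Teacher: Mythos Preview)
Your approach is essentially the same as the paper's: expand $P_i(w(x))$ in the monomial basis, organize the coefficient polynomials $Q_{i,\beta}(w)$ into a tower $\mcS$ on $V^{m+1}$ inheriting the layer structure of $\bar P$, show $\mcS$ is regular by specializing back to the $P_i$'s (the paper restricts to diagonals $w_{l_1}=\cdots=w_{l_d}$, you zero out some $w_t$ and diagonalize the rest), and then read off the uniform fiber sizes. You are more explicit than the paper about the final step---invoking Lemma~\ref{atom-size-poly} to convert regularity of $\mcS$ into the size estimate---which the paper leaves entirely implicit.

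One caution on the specialization step (which is equally sketchy in the paper): setting a subset of the $w_t$ to $0$ does \emph{not} isolate a single multi-degree---it only kills the multi-degrees not supported on the surviving indices---and then setting the remaining $w_t$ equal to a common $w$ can produce cancellation among the surviving $c_{i,\beta}$'s (e.g.\ for $d=3$, $m=2$, the multi-degrees $(2,1)$ and $(1,2)$ both survive on $S=\{1,2\}$ and both map to $3\,\tilde P_i(w)$). The clean fix is to specialize via $w_t\mapsto a_t w$ for scalars $a_0,\ldots,a_m\in\mF$: then $\tilde Q_{i,\beta}\mapsto \binom{d_i}{\gamma_\beta}a^{\gamma_\beta}\tilde P_i(w)$, and regularity of $\bar P$ forces $\sum_\beta c_{i,\beta}\binom{d_i}{\gamma_\beta}a^{\gamma_\beta}=0$ for every $a$, whence $c_{i,\beta}=0$ by linear independence of monomials of degree $\le d$ over a field with $|\mF|>d$. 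With this emendation your argument goes through.
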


\begin{proof}
We prove by induction on $d$. For $d=1$ the statement is clear. Assume $d>1$
We are given a filtered collection of polynomials $\mcP=\{P^{e,f}\}_{1\le e \le h, 0 \le f \le J_e}$.  We first make the following observation. Let $P$ be of degree $d$ . 
$P(t)=\sum_{I \in \mcI_k}a_{I}t_I$
where $\mcI_{d}(n)$ is the set of
ordered tuples $I=(i_1, \ldots, i_{d})$ with  $1 \le i_1 \le \ldots \le i_{d} \le n$, and $t_I= t_{i_1} \ldots t_{i_{d}}$.
Note that  for any polynomials $l(t)$ of degrees $<d$  we have that $P(t)+l(t)$ is of the same rank as $P$. 
We can write
\[
P(w(x)) = \sum_{I \in \mcI_{d}(n)} a_{I}w^I(x)
= \sum_{I \in \mcI_{d}(n)} a_{I}\sum_{l_1, \ldots, l_{d}=1}^m w^{i_1}_{l_1} \ldots w^{i_{d}}_{l_{d}}x_{l_1} \ldots x_{l_{d}},
\]
where $w^I = \prod_{i \in I} w^{i}$. 
For $( l_1, \ldots ,l_{d}) \in \mcI_{d}(m)$  the term  $x_{l_1} \ldots x_{l_{d}}$ has as coefficient
\[
Q_{( l_1, \ldots ,l_{d})}(w)= \sum_{\sigma \in S_{d}} \sum_{I \in \mcI(n)} a_{I}w^{i_1}_{l_{\sigma(1)}} \ldots w^{i_{d}}_{l_{\sigma(d)}}.
\]

Observe that restricted to the subspace
$w_{l_1} = \ldots = w_{l_{d}}$ we can write the above as
 \[
Q_{( l_1, \ldots ,l_{d})}(w) = d! P(w_{l_1}) + R(w)
\]
where $w_j=(w_j^1, \ldots, w_j^n)$, and $R(w)$ is of lower degree in  $w_{l_1}$.  

Now consider the filtered system $\mcP$. It gives rise to a filtered system 
\[
\{Q^{e,f}_{( l_1, \ldots ,l_{e})}(w)\}_{1\le e \le h, 0 \le f \le J_e, ( l_1, \ldots ,l_{d_e}) \in \mcI_{d_e}(m)}
\]
Given an  element $Q^{e,f}_{( l_1, \ldots ,l_{d})}$ at level $e$ of degree $d$, we restrict to a subspace as above and get 
$ Q^{e,f}(w)=d! P(w_{l_1}) + R^{e,f}(w)$ and $R(w)$ is of lower degree in  $w_{l_1}$.  But the same holds for all lower degree polynomials as well - restricted to this subspace they are of the form $j!P^{e,f}(w_{l_1}) + R^{e,f}(w)$ where $R^{e,f}$ of lower degree in  $w_{l_1}$.  Similarly this hold for a linear combination of elements at lever $h$.  This is sufficient since relative rank can only decrease when restricting to a subspace.  So we just need to make sure that the original relative rank beats the new number of polynomials at each level which is $ \le m^{O(d)}$. 

\end{proof}
 
We formulate a number of additional results, that were proved in the context of high rank collections of polynomials, and whose proofs can be easily adapted to the new notion of relative rank. 

We recall the definition of weakly polynomial function from \cite{kz-extension}
\begin{definition}\label{weak-def}\leavevmode
\begin{enumerate}
\item  Let  $V$ be a $\mF$-vector space  and  $X\subset V$. We say that a function $f:X \to \mF$ is {\it weakly polynomial} of degree $\leq a$ if the restriction $f_{|L}$ to any affine subspace  $L \subset X$ is a polynomial of degree $\leq a$.  
\item $X$ satisfies $\star ^a$ if any weakly polynomial function of degree $\leq a$ on $X$ is a restriction  of a polynomial function of degree $\leq   a$ on $V$.
\end{enumerate}
\end{definition}

\begin{theorem}\label{w}For any $d, a\geq 1$ there exist constants  $A(d,h,a),B(d,h,a)$ such that the following holds: For any field $\mF$ which is either finite with $|\mF|>ad$ or algebraically closed of characteristic zero or $>d$, a $\mF$-vector space $V$, an $ (A,B,2) $-regular tower  $\mcP$ of degree $\le d$ and height $ \le h,$  the subset $Z(\mcP)\subset V$ has the property $\star_a$.
\end{theorem}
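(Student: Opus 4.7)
The plan is to adapt the extension theorem of \cite{kz-extension} for high rank collections to the relative rank setting, using Theorem \ref{A1} (uniformity of affine parametrizations) and Theorem \ref{null-poly} (robust Nullstellensatz). First I would reduce to the finite field case. For algebraically closed $\mF$ of characteristic zero or $>d$, property $\star_a$ for $Z(\mcP)$ can be rephrased as the surjectivity of the restriction map from polynomials of degree $\le a$ on $V$ onto weakly polynomial functions of degree $\le a$ on $Z(\mcP)$; both source and target are finite-dimensional, and a Lefschetz/model-theoretic argument (as in \cite{kz-extension, kz-survey}) reduces the assertion to the case of large finite fields of characteristic $>d$. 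The base case $a = 0$ follows from $Z(\mcP)$ being nonempty (Lemma \ref{atom-size-poly}) together with the equidistribution provided by Corollary \ref{fubini-poly}, which together imply that any locally constant weakly polynomial function on $Z(\mcP)$ is globally constant.

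In the finite field case, I would proceed by induction on $a$. Given a weakly polynomial $f \colon Z(\mcP) \to \mF$ of degree $\le a$ with $\mcP$ sufficiently regular, the strategy is to choose, for a generic tuple $(t_1,\ldots,t_m) \in V^m$ of direction vectors (with $m$ a constant depending on $a$ and $d$), the parametrized family of affine subspaces $L_{x} = x + \mathrm{span}(t_1,\ldots,t_m)$ and exploit the fact that $f$ restricted to $L_x \cap Z(\mcP)$ is polynomial of degree $\le a$. Using Lemma \ref{derivatives-poly} and Theorem \ref{A1}, one shows that for generic $t_i$, many $x \in Z(\mcP)$ yield $L_x \subset Z(\mcP)$, and the polynomial $f|_{L_x}$ depends polynomially on $x$ of degree $\le a$. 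From this one assembles a global polynomial $g$ on $V$ of degree $\le a$ whose restriction to $Z(\mcP)$ matches $f$ on a set of density $\ge 1 - q^{-s}$. Applying the robust Nullstellensatz (Theorem \ref{null-poly}) to the difference $g - f$, one concludes $g|_{Z(\mcP)} = f$.

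The main obstacle is establishing, in a quantitatively controlled way, that the local polynomials $f|_{L_x}$ assemble into a global polynomial of degree $\le a$. The consistency between different choices of direction tuples $(t_1,\ldots,t_m)$ follows from $f$ being weakly polynomial, but controlling the total degree of the assembled polynomial requires repeated use of the inductive hypothesis applied to derived towers, mediated by the uniformity in Theorem \ref{A1}. The constants $A(d,h,a), B(d,h,a)$ grow with each inductive step because at level $a-1$ the hypothesis must be applied to towers such as $\mcP_t$ from Lemma \ref{derivatives-poly}, which absorb some of the regularity of the original $\mcP$; hence $A(d,h,a), B(d,h,a)$ must be chosen to dominate the constants needed to guarantee regularity of all such derived towers that appear in the induction.
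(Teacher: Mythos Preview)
The paper does not give a detailed proof of this theorem; it only asserts that the argument of \cite{kz-extension} adapts once one replaces ordinary rank by relative rank and uses the relative versions of the equidistribution and surjectivity lemmas (Theorem~\ref{A1}, Lemma~\ref{derivatives-poly}, etc.). Your outline is broadly in the same spirit, and the reduction to finite fields, the induction on $a$, and the use of Theorem~\ref{A1} to produce many affine subspaces inside $Z(\mcP)$ are all correct ingredients.

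There is, however, a concrete gap in your final step. You propose to produce a polynomial $g$ on $V$ with $g=f$ on a $(1-q^{-s})$-fraction of $Z(\mcP)$ and then ``apply the robust Nullstellensatz (Theorem~\ref{null-poly}) to $g-f$'' to conclude $g|_{Z(\mcP)}=f$. Theorem~\ref{null-poly} (and likewise Proposition~\ref{99-to-100}) applies only to \emph{polynomials}: its proof uses the parallelepiped identity $\sum_{\omega}(-1)^{|\omega|}P(x+\omega\cdot t)=0$, which holds for polynomials of bounded degree but is not available for a function that is merely weakly polynomial on $Z(\mcP)$. Since $f$ is not yet known to be a polynomial (that is exactly what you are trying to prove), $g-f$ is only weakly polynomial and the Nullstellensatz does not apply to it.

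The fix, which is how the argument in \cite{kz-extension} actually closes, is to use the weakly polynomial hypothesis directly together with Theorem~\ref{A1}. The uniformity of $\kappa_{\bar P}$ provides, through every point of $Z(\mcP)$, many affine subspaces $L\subset Z(\mcP)$ of dimension $m$ (with $m$ large relative to $a$). On each such $L$ the function $g-f$ is a genuine polynomial of degree $\le a$; if it vanishes on a $(1-q^{-s})$-fraction of $L$ with $q^{-s}<q^{-a}$, it vanishes identically on $L$. A Fubini/connectivity argument over the family of such $L$ (this is where Theorem~\ref{A1} and Lemma~\ref{derivatives-poly} are really used) then upgrades ``$g=f$ almost everywhere'' to ``$g=f$ everywhere'' on $Z(\mcP)$. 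In short: replace the appeal to Theorem~\ref{null-poly} by the line/subspace argument that exploits the weakly polynomial structure of $f$ itself.
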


Combined with the polynomial cost regularization of Theorem \ref{polyreg}  we obtain:

\begin{theorem}\label{w1}For any $d, a\geq 1$ there exists  $A=A(d,a)$ such the following holds:  for any field $\mF$ which is either finite with $|\mF|>ad$ or algebraically closed of characteristic zero or $>d$, a $\mF$-vector space $V$, a collection of polynomials  $\bar P=(P_i)_{ 1 \le i \le c}$ with $\deg P_i \le d$, there exists a collection of polynomials $\mcQ$ such that $\bar P \subset \mcI(\mcQ)$  such that  $Z(\mcQ)\subset V$ has the property $\star _a$, and $|\mcQ| \le Ac^A$. 
\end{theorem}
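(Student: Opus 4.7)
The plan is to deduce the statement by chaining Theorem \ref{polyreg} (polynomial cost regularization) with Theorem \ref{w} ($\star_a$ for regular towers). Given $d$ and $a$, I first fix the constants $A_0 = A_0(d,d,a)$ and $B_0 = B_0(d,d,a)$ supplied by Theorem \ref{w} applied with height parameter $h=d$; these are the regularity thresholds that guarantee that $Z(\mcQ)$ has property $\star_a$ once $\mcQ$ is a $(A_0,B_0,2)$-regular tower of degree $\le d$ and height $\le d$.

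Next, I invoke Theorem \ref{polyreg} with parameters $A_0$, $B_0$, $d$ and with slack parameter $s=2$ (any fixed positive constant would do). This produces a polynomial tower $\mcQ'$ of degree $\le d$ and height $\le d$ which is $(A_0,B_0,2)$-regular, satisfies the containment $\bar P\subset I(\mcQ')$, and has dimension
\[
\dim(\mcQ') \le C(d,a)\,(c+2)^{D(d,a)},
\]
where $C,D$ depend only on $A_0,B_0,d$, hence only on $d$ and $a$. Absorbing the additive constant and the polynomial exponent into a single constant $A=A(d,a)$, this bound becomes $|\mcQ'| \le Ac^A$.

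Now Theorem \ref{w} applies directly to $\mcQ'$: the tower is $(A_0,B_0,2)$-regular, of degree $\le d$ and height $\le d$, and so $Z(\mcQ')\subset V$ has the property $\star_a$. Combined with the containment $\bar P\subset I(\mcQ')$ (which implies $Z(\mcQ')\subset Z(\bar P)$, but more importantly records $\bar P$ inside the ideal as required in the statement), this gives the conclusion with $\mcQ:=\mcQ'$.

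The step to watch is simply the compatibility of the constants: the thresholds $A_0,B_0$ in Theorem \ref{w} depend on the height $h$, which in turn is bounded by $d$ in Theorem \ref{polyreg}. Because both theorems are already proved with uniform dependence on $d$, there is no circular dependence, and the main work has been carried out earlier. Thus the only content here is the careful matching of parameters, and the polynomial size bound $Ac^A$ is inherited directly from the polynomial cost of the regularization procedure.
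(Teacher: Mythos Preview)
Your proposal is correct and matches the paper's approach exactly: the paper simply states that Theorem \ref{w1} is obtained by combining Theorem \ref{w} with the polynomial cost regularization of Theorem \ref{polyreg}, and your argument spells out precisely this combination with the right parameter matching (height $h=d$, slack $s=2$).
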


Finally we can also adapt the results from \cite{kz-uniform} to the setting of relative rank: Let $V$ be a vector space over a field $\mF$.  An $m$-cube  in a vector space $V$ is a collection $(u|\bar v),u\in V,\bar v\in V^m$ of $2^m$ points 
$\{ u+\sum _{i=1}^m \bo _iv_i\}$, $\bo _i\in \{0 ,1\}$.

For any map  $f:V\to H$ where $H$ is an abelian group we denote by $f_m$ the map from the set $C_m(V)$ of $m$-cubes to $H$ given by 
$$f_m(u|\bar v)=\sum _{\bar \bo \in \{ 0 ,1\}^m}(-1)^{|w|}f(u+\sum _{i=1}^m\bo _iv_i)$$
where $|\bo|=\sum _{i=1}^m\bo _i$.  For a subset  $X \subset V$ we denote 
 $C_m(X)$ the set of $m$-cubes in $V$ with all vertices in $X$. Note that in the case that $H=\mF$, where $\mF$ a prime field, functions $f:V \to \mF$ such that 
 $f_m$ vanishes on $C_m(V)$ are precisely polynomials of degree $<m$.

\begin{theorem}\label{testing-X-high}
Let $m, d>0$. There exists $C=C(d,m,h)$  such  that the following holds: For any $0<\epsilon$,  a finite field $\mF$ and an $\mF$-vector space $V$, a $ (C,C,2) $-regular tower $ \mcP $ of degree $ \le d $ and height $ \le h $ and any
 $f:X=Z(\mcP) \to H$  with $f_m(c)=0$ for $\epsilon$-a.e.  $c \in C_m(X)$,  there exists a  function $h:X \to H$  such that $h_m\equiv 0$, and  $h(x)=f(x)$ on $C \epsilon$-a.e. $x \in X$. \end{theorem}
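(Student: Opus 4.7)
The plan is to adapt the local-decoding argument from \cite{kz-uniform} to the relative setting $X=Z(\mcP)$, using the Fubini and derivatives lemmas for regular polynomial towers developed in the paper. For $x\in X$ and $\bar v=(v_1,\ldots,v_m)\in V^m$, call $\bar v$ \emph{$x$-admissible} if $x+\sum_i\omega_i v_i\in X$ for every $\omega\in\{0,1\}^m$, and write $V_x$ for the set of $x$-admissible $\bar v$. Adapting Claim \ref{mult-coord} and Lemma \ref{derivatives-poly} (in the spirit of Lemma \ref{pped-rank}), one verifies that for $q^{-2}$-a.e. $x\in X$ the set $V_x$ is the zero locus of an $(A',B',2)$-regular polynomial tower of controlled dimension, hence $|V_x|$ is of predictable size by Lemma \ref{atom-size-poly}. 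For such $x$ and $\bar v\in V_x$ define the vote
\[
\mathrm{vote}(x,\bar v) \;:=\; -\!\!\!\sum_{\omega\in\{0,1\}^m\setminus\{0\}} (-1)^{|\omega|} f\bigl(x+\textstyle\sum_i \omega_i v_i\bigr),
\]
so that $f_m(x|\bar v)=0$ iff $\mathrm{vote}(x,\bar v)=f(x)$. Let $h(x)$ be the plurality vote over $V_x$ (ties broken arbitrarily), and set $h:=f$ on the small exceptional set where $V_x$ is not regular.

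Next I would show $h=f$ on $C\epsilon$-a.e. $x$. Iterating Corollary \ref{fubini-poly} (equivalently, applying it to the $m$-fold cube tower $\mcP_{x,m}$ of Section 6) gives
\[
\Bigl|\, \mE_{c\in C_m(X)} \mathbf{1}[f_m(c)\neq 0] \;-\; \mE_{x\in X}\mE_{\bar v\in V_x} \mathbf{1}[\mathrm{vote}(x,\bar v)\neq f(x)] \,\Bigr| \le q^{-2}.
\]
Replacing $\epsilon$ by $\max(\epsilon,q^{-2})$ if needed, the right hand side is $\le 2\epsilon$, and Markov's inequality yields a subset of $X$ of density $\ge 1-O(\sqrt{\epsilon})$ on which $\mathrm{vote}(x,\bar v)=f(x)$ for a strict majority of $\bar v\in V_x$, forcing $h(x)=f(x)$.

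The main obstacle is the third step: upgrading $h_m\equiv 0$ from almost every $m$-cube in $X$ to \emph{every} such cube. The strategy mirrors Proposition \ref{99-to-100} and the gluing Claim \ref{gluing-many}. Fix $c=(u|\bar v)\in C_m(X)$ and consider auxiliary $\bar w\in V^m$ for which the $2m$-dimensional configuration $\{\,u+\omega\cdot\bar v+\eta\cdot\bar w : \omega,\eta\in\{0,1\}^m\,\}$ lies in $X$. By the polynomial analogue of Lemma \ref{pped-rank}, the set of such $\bar w$ is the zero locus of a regular tower whose cardinality is predictable. For each vertex $y_\omega:=u+\omega\cdot\bar v$, $\bar w\in V_{y_\omega}$ on almost every such $\bar w$, and by the second step we may replace $h(y_\omega)$ by $\mathrm{vote}(y_\omega,\bar w)$ on a $1-O(\sqrt{\epsilon})$ fraction of $\bar w$. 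Linearly combining the resulting $2^m$ identities expresses $h_m(c)$ as a signed sum of $2^m-1$ values $f_m(c')$ for cubes $c'\in C_m(X)$ derived from $(c,\bar w)$; averaging over $\bar w$ and applying the Fubini-type estimate shows that on a positive density set of admissible $\bar w$ the quantity $h_m(c)$ equals a specific element of $H$, and since $h_m(c)$ does not depend on $\bar w$, that element must be $0$. The crucial use of regularity here is exactly as in Section 6: it guarantees that cardinalities of the various $\bar w$-configuration varieties are controlled, so that an ``almost everywhere'' statement can be promoted to a pointwise one via the $99\%$-to-$100\%$ machinery.
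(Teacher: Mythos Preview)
The paper does not actually prove this theorem: it is stated in the appendix with only the remark that the results of \cite{kz-uniform} can be adapted to the relative-rank setting using the equidistribution machinery developed in the body of the paper. Your proposal is precisely such an adaptation (majority-vote local correction, then a $99\%\to100\%$ upgrade via auxiliary cubes), so it matches what the paper has in mind.

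Two small points are worth correcting. First, in your second step Markov's inequality with threshold $1/2$ gives a bad set of density $O(\epsilon)$, not $O(\sqrt\epsilon)$: if $\mE_{x}\bigl[\text{bad-vote fraction at }x\bigr]\le 2\epsilon$ then $\mP_x\bigl[\text{bad fraction}\ge 1/2\bigr]\le 4\epsilon$. You actually need this linear bound, since the theorem claims $h=f$ on $C\epsilon$-a.e.\ points; the $\sqrt\epsilon$ you wrote would not suffice.

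Second, your third step is the right strategy but the justification as written has a gap. For a \emph{fixed} cube $c$ with vertices $y_\omega$, you cannot directly replace $h(y_\omega)$ by $\mathrm{vote}(y_\omega,\bar w)$: the equality $h(y_\omega)=\mathrm{vote}(y_\omega,\bar w)$ was only established for \emph{most} $y_\omega\in X$, and you have no control over the particular vertices of $c$. The standard repair (already in \cite{kz-uniform}) is to first deduce from step 2 that $h_m=0$ on $(1-O(\epsilon))$-a.e.\ cubes, and then run the self-correction with $h$ rather than $f$: for a fixed $c$ and two independent admissible $\bar w,\bar w'$, one uses the polynomial analogue of Lemma \ref{pped-rank} together with Lemmas \ref{Fubini} and \ref{Derivatives} to show that the derived $m$-cubes $(y_\omega+\eta\cdot\bar w\,|\,\bar w')$ (with $\eta\neq0$, so the base point is \emph{shifted off} the fixed vertex) are approximately uniformly distributed in $C_m(X)$; hence for most $(\bar w,\bar w')$ all of these carry $h_m=0$, which forces $h_m(c)=0$ by the resulting algebraic identity. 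This is the place where the regularity of $\mcP$ genuinely replaces the translation-invariance one has when $X=V$.
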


\end{document}